\numberwithin{equation}{section}
\newtheorem{theorem}{Theorem}[section]
\newtheorem{proposition}[theorem]{Proposition}
\newtheorem{lemma}[theorem]{Lemma}
\newtheorem{corollary}[theorem]{Corollary}
\theoremstyle{remark}
\newtheorem{definition}[theorem]{Definition}
\newtheorem{example}[theorem]{Example}
\newtheorem{remark}[theorem]{Remark}
\newcounter{FNC}[page]
\def\fauxfootnote#1{{\addtocounter{FNC}{2}\Magenta{$^\fnsymbol{FNC}$}%
     \let\thefootnote\relax\footnotetext{\Magenta{$^\fnsymbol{FNC}$#1}}}}
\newcommand{\MySmVec}[2]{(\mbox{\tiny$\begin{smallmatrix}#1\\#2\end{smallmatrix}$})}
\newcommand{\MySmTVec}[3]{\bigl(\mbox{\tiny$\begin{smallmatrix}#1\\#2\\#3\end{smallmatrix}$}\bigr)}
\newcommand{\MyVect}[2]{(\begin{smallmatrix}#1\\#2\end{smallmatrix})}
\newcommand{\MyDiamond}{\includegraphics{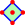}}
\newcommand{\MySmDiamond}{\includegraphics[height=6pt]{figures/Diamond.eps}}
\renewcommand{\qed}{\hfill\raisebox{-3pt}{\MyDiamond}}
\newcommand{\tsp}{\hspace{7pt}}
\newcommand{\defcolor}[1]{\RoyalBlue{#1}}
\newcommand{\demph}[1]{\defcolor{{\sl #1}}}
\newcommand{\lhra}{\ensuremath{\lhook\joinrel\relbar\joinrel\relbar\joinrel\rightarrow}}
\newcommand{\calA}{{\mathcal A}}
\newcommand{\calB}{{\mathcal B}}
\newcommand{\calC}{{\mathcal C}}
\newcommand{\calF}{{\mathcal F}}
\newcommand{\calP}{{\mathcal P}}
\newcommand{\calV}{{\mathcal V}}
\newcommand{\frakm}{{\mathfrak m}}
\newcommand{\bzero}{{\mathbf 0}}
\newcommand{\be}{{\mathbf e}}
\newcommand{\bff}{{\mathbf f}}
\DeclareMathOperator{\diag}{\rm diag}
\DeclareMathOperator{\supp}{\rm supp}
\DeclareMathOperator{\conv}{\rm conv}
\DeclareMathOperator{\spec}{\rm spec}
\DeclareMathOperator{\Aff}{\rm Aff}
\DeclareMathOperator{\rank}{\rm rank}
\DeclareMathOperator{\New}{\rm New}
\DeclareMathOperator{\Trop}{\rm Trop}
\DeclareMathOperator{\HP}{\rm HP}
\DeclareMathOperator{\HF}{\rm HF}
\newcommand{\C}{{\mathbb C}}
\newcommand{\N}{{\mathbb N}}
\renewcommand{\P}{{\mathbb P}}
\newcommand{\Q}{{\mathbb Q}}
\newcommand{\R}{{\mathbb R}}
\newcommand{\Z}{{\mathbb Z}}
\newcommand{\Vol}{\mbox{\rm Vol}}
\newcommand{\ini}{\mbox{\rm in}}
\newcommand{\Hom}{\mbox{\rm Hom}}
\newcommand{\MV}{\mbox{\it MV}}
\title{Ibadan Lectures on Toric Varieties}
\author{Frank Sottile}
\address{Frank Sottile\\
         Department of Mathematics\\
         Texas A\&M University\\
         College Station\\
         Texas \ 77843\\
         USA}
 \email{sottile@tamu.edu}
\urladdr{franksottile.github.io/}
\thanks{Research supported in part by NSF grant DMS-1501370.}
\thanks{2017 CIMPA Research School in Ibadan, Nigeria supported in part by CIMPA, IMU,
        ICTP, Perimeter Institute, and the Fields Institute. 
        Web page at {\tt franksottile.github.io/conferences/CIMPA17}.}
\subjclass[2010]{14M25}
\keywords{Toric varieties, Newton polyhedra, Bernstein's Theorem, Kushnirenko's Theorem}
\begin{document}

\begin{abstract} 
 These notes are based on, and significantly extend, Frank Sottile's short course of four lectures at the 
 CIMPA school on Combinatorial and  Computational Algebraic Geometry in Ibadan, Nigeria that took place 12--23 June
 2017. 
\end{abstract}

\maketitle

%
%
%

\tableofcontents

Toric varieties are perhaps the most accessible class of algebraic varieties.
They often arise as varieties parameterized by monomials, and their structure may be completely understood through
objects from geometric combinatorics.
While accessible and understandable, the class of toric varieties is also rich enough to illustrate many properties of
algebraic varieties.
Toric varieties are also ubiquitous in applications of mathematics, from tensors to statistical models to geometric
modeling to solving systems of equations, and they are important to other branches of mathematics such as geometric
combinatorics and tropical geometry.

For additional reference, see~\cite{CLS,Ewald,Fulton,Sottile} (the last is freely
accessible and covers some material from the perspective of real toric varieties).
For an accessible background on algebraic geometry and Gr\"obner bases, we recommend~\cite{CLO}, which is a classic and won
the American Mathematical Society's Leroy P. Steele Prize for Exposition in 2016~\cite{Steele}.

\subsection*{Notation and a note about our field}
We write $\C$ for the complex numbers, $\R$ for the real numbers, $\Q$ for the rational numbers, $\Z$ for the integers, and $\N$ for the
natural numbers (nonnegative integers).
While we describe complex toric varieties, the description holds verbatim for any field as 
toric varieties are naturally schemes over $\mbox{spec}(\Z)$.
When the ambient field $\Bbbk$ is not algebraically closed, there is an attractive theory of {\sl arithmetic toric varieties}~\cite{ATV}.

\section{Affine Toric Varieties}\label{S:one}
Recall that every finitely generated free abelian group $G$ is isomorphic to $\Z^m$ for some positive integer $m$ called
the \demph{rank} of $G$, and the isomorphism is equivalent to choosing a basis for $G$.

Write \defcolor{$\C^*$} for the group of nonzero complex numbers and \defcolor{$(\C^*)^n$} for the complex
torus of invertible diagonal $n\times n$ complex matrices, equivalently, of ordered $n$-tuples of nonzero complex numbers. 
The free abelian group $\Z^n$ of rank $n$ is associated to $(\C^*)^n$ in two distinct ways.
It is isomorphic to the lattice of one-parameter subgroups $\defcolor{\Hom_{g}}(\C^*,(\C^*)^n)$ of group homomorphisms
from $\C^*$ to $(\C^*)^n$. 
These are also called \demph{cocharacters}.
An integer vector $w=(w_1,\dotsc,w_n)\in\Z^n$ gives the map which
sends $t\in\C^*$ to the diagonal matrix $\defcolor{t^w}:=\diag (t^{w_1},\dotsc,t^ {w_n})\in(\C^*)^n$.
The group of characters, $\Hom_{g}((\C^*)^n,\C^*)$, equivalently of \demph{Laurent monomials}, is also isomorphic to $\Z^n$.
Here, an integer vector $a=(a_1,\dotsc,a_n)^T\in\Z^n$ gives the Laurent monomial 
$\defcolor{x^a}:=x_1^{a_1}\dotsb x_n^{a_n}$, which is also a group homomorphism $(\C^*)^n\ni x\mapsto x^a \in\C^*$, where
$x=\diag(x_1,\dotsc,x_n)$.

This ambiguity in the two roles for $\Z^n$ is resolved by writing \defcolor{$N$} for the cocharacters
and \defcolor{$M$} for the characters.
When expressed as integer vectors, elements of $N$ will be row vectors and those of $M$ column vectors.
Applying a character $a\in M$ to a cocharacter $w\in N$ gives a character of $\C^*$, which is an integer,
well-defined up to sign.
A standard choice gives the standard Euclidean pairing $N\otimes M\to\Z$, which we may see by computing
\[
   (t^w)^a\ =\  (t^{w_1})^{a_1}\dotsb (t^{w_n})^{a_n}\ =\ 
   t^{w_1a_1+\dotsb+w_na_n}\ =\  t^{w\cdot a}\,.
\]
The coordinate ring of $(\C^*)^n$ is the ring $\C[x_1,x_1^{-1},\dotsc,x_n,x_n^{-1}]$ of Laurent polynomials.
This is also the group algebra $\C[M]$ and we write \defcolor{$\C[x^{\pm}]$} for this Laurent ring.

\subsection{Affine Toric Varieties}

Let $\defcolor{\calA}\subset M\simeq\Z^n$ be a finite subset of monomials/characters.
It is convenient to represent $\calA$ as the set of column vectors of an integer matrix with $n$ rows.
We will also write $\calA$ for this matrix.
We will use this set $\calA$ to index coordinates, variables, etc.
For example $(\C^*)^\calA$ is the set of functions from $\calA$ to $\C^*$.
It is the algebraic torus $(\C^*)^{|\calA|}$ whose coordinates are indexed by the elements of $\calA$.
Likewise $\C^\calA$ is the vector space of functions from $\calA$ to $\C$.
It has coordinates $(z_a\mid a\in A)$.
If $\calA$ is represented by the matrix $\MyVect{0&0&1&1}{0&1&0&1}$,
then $\C^\calA\simeq\C^4$ has coordinates $z_{\MySmVec{0}{0}}$, $z_{\MySmVec{0}{1}}$, $z_{\MySmVec{1}{0}}$, $z_{\MySmVec{1}{1}}$.

This set $\calA$ may be used to define a map $\defcolor{\varphi_\calA}\colon(\C^*)^n\to\C^\calA$, where
 \begin{equation}\label{Eq:varphiA}
   \varphi_\calA(x)\ :=\ (x^a \mid a\in\calA)\,.
 \end{equation}
In the example where $\calA$ is represented by the matrix $\MyVect{0&1&0&1}{0&0&1&1}$, for $(x,y)\in(\C^*)^2$,
$\varphi_\calA(x,y)=(1,x,y,xy)\in\C^\calA$. 
Notice that the map $\varphi_\calA$~\eqref{Eq:varphiA} is a group homomorphism
$\varphi_\calA\colon(\C^*)^n\to(\C^*)^\calA$ followed by the inclusion  $(\C^*)^\calA\hookrightarrow\C^\calA$.
The Zariski closure of the image $\varphi_\calA((\C^*)^n)$ in $\C^\calA$ is the \demph{affine toric variety $X_\calA$}.
We deduce two characterizations of affine toric varieties from this definition.
Affine toric varieties are varieties that arise as the closure in $\C^m$ of a subtorus of $(\C^*)^m$, and affine toric
varieties are varieties that are parameterized by monomials.
Note that the torus $(\C^*)^n$ acts on $X_\calA$ with a dense orbit and this action extends to the ambient affine space
$\C^\calA$. 

If the subgroup \defcolor{$\Z\calA$} of $M$ generated by $\calA$ is a proper subgroup, then the homomorphism
$\varphi_\calA$ has a nontrivial kernel $\defcolor{T}:=\ker\varphi_\calA$.
In this case, $\varphi_\calA$~\eqref{Eq:varphiA} induces an injective map on $(\C^*)^n/T\to\C^\calA$.
In Exercise~\ref{Ex:injective}, you are asked to verify these claims and identify the kernel.
We have that $\Z\calA$ is the lattice of characters of $(\C^*)^n/T$ and so 
$\dim X_\calA=\dim (\C^*)^n/T=\rank \Z\calA$.

\begin{example}\label{Ex:AffineTV}
 Suppose that $n=1$ and $\calA=\{2,3\}\subset\Z$.
 For $s\in\C^*$,  $\varphi_\calA(s)=(s^2,s^3)\in\C^2$.
 The closure of $\varphi_\calA((\C^*)^n)$ is the cuspidal cubic, $\calV(y^2-x^3)$, where $\C^2$ has coordinates
 $(x,y)$:
\[
  \begin{picture}(81,81)
    \put(25,74){$y$}
    \put(75,45.5){$x$}
    \put(0,0){\includegraphics{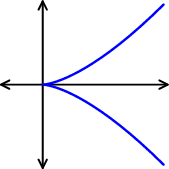}}
  \end{picture}
\]
 Since $\Z\calA=\Z$, $\varphi_\calA$ is injective.
 This may also be seen as if $(x,y)=\varphi_\calA(s)$, then $s=y/x$.

 Suppose that $k,m\geq 1$ are integers.
 Let $\be_1,\dotsc,\be_k$ and $\bff_1,\dotsc,\bff_m$ be the standard unit basis vectors for $\Z^k$
 and $\Z^m$, respectively, and set 
\[
   \calA\ :=\ 
    \{ \be_i+\bff_j\mid i=1,\dotsc,k\mbox{ and }j=1,\dotsc,m\}\subset\Z^k\times\Z^m\,,
\]
 which has $mn$ elements.
 The map $\varphi_\calA\colon\C^k\times\C^m\to\C^{km}$ is 
\[
   (x_1,\dotsc,x_k\,,\,y_1,\dotsc,y_m)\ \longmapsto\ (x_iy_j\mid  i=1,\dotsc,k\mbox{ and }j=1,\dotsc,m)\,.
\]
 If $\C^{km}$ is identified with the space of $k\times m$ matrices, this map is $(x,y)\mapsto x y^T$, and 
 thus $X_\calA$ is the space of $k\times m$ matrices of rank at most 1.

 Finally, suppose that $d\geq 1$ is an integer.
 The $d$th \demph{Veronese map} $\varphi_\calA \colon \C^n\to\C^{\binom{d+n}{n}}$, is when $\calA$ is the set of all
 exponent vectors in $\N^n$ of degree at most $d$. 
 When $n=1$ and $d=3$, we have $\calA=\{0,1,2,3\}\subset\Z$ and $\varphi_\calA(x)=(1,x,x^2,x^3)$.
 Ignoring the first coordinate which is constant, $X_\calA$ is the moment (rational normal) curve in $\C^3$.
 \qed
\end{example}

\subsection{Toric Ideals}

The ideal \defcolor{$I_\calA$} of $X_\calA$ is a \demph{toric ideal}.
It is an ideal of the coordinate ring \defcolor{$\C[z_a\mid a\in\calA]$} of the affine space $\C^\calA$.
To understand $I_\calA$, consider the pullback map corresponding to $\varphi_\calA$ on coordinate rings,
 \begin{eqnarray*}
   \defcolor{\varphi^*_\calA}\ \colon\ \C[z_a\mid a\in\calA] &\longrightarrow& \C[x_1,x_1^{-1},\dotsc,x_n,x_n^{-1}]\\
       z_a\mbox{\qquad} &\longmapsto & \mbox{\qquad} x^a\ .
 \end{eqnarray*}
The toric ideal $I_\calA$ is the kernel of $\varphi^*_\calA$.
The exponent of a monomial $z^u$ in $\C[z_a\mid a\in\calA]$ is the vector $u=(u_a\mid a\in\calA)\in\N^\calA$, and the
image of $z^u$ under $\varphi^*_\calA$ is 
\[
    \varphi^*_\calA(z^u)\ =\ \prod_{a\in\calA} (x^a)^{u_a}\ =\  x^{\sum a u_a}\,.
\]
Let us write the sum $\sum_{a\in\calA} a u_a$ in this exponent as \defcolor{$\calA u$}.
When $\calA$ is represented by an integer matrix $\calA$, this is the usual matrix-vector product.
Observe that the kernel $I_\calA$ of $\varphi^*_\calA$ contains the following set of binomials
 \begin{equation}
 \label{Eq:toric_binomials}
   \{  z^u - z^v \mid \calA u = \calA v\}\,. 
 \end{equation}
Suppose that $\calA$ is represented by the matrix $\MyVect{0&1&2&3&4}{4&3&2&1&0}$.
If $u=(0,1,1,1,0)^T$ and $v=(1,0,1,0,1)^T$, then $\calA u=\calA v$, which gives the binomial in $I_\calA$,
\[
   z_{\MySmVec{1}{3}}z_{\MySmVec{2}{2}}z_{\MySmVec{3}{1}}
   \ -\ 
   z_{\MySmVec{0}{4}}z_{\MySmVec{2}{2}}z_{\MySmVec{4}{0}}\,.
\]

\begin{theorem}\label{Th:Lin_Span_Toric}
 The toric ideal $I_\calA$ is a prime ideal.
 As a complex vector space, it is spanned by the binomials~\eqref{Eq:toric_binomials}.
\end{theorem}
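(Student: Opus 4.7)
The plan is to handle the two claims separately. Primality is essentially formal: $I_\calA$ is the kernel of the ring homomorphism $\varphi^*_\calA \colon \C[z_a \mid a\in\calA] \to \C[x^{\pm}]$, and the target Laurent polynomial ring is an integral domain (it sits inside the field of rational functions). The kernel of a homomorphism from a commutative ring to an integral domain is always prime, so the first assertion requires no computation.

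For the second assertion, I would first check the easy direction: every binomial $z^u - z^v$ with $\calA u = \calA v$ lies in $I_\calA$, since $\varphi^*_\calA(z^u - z^v) = x^{\calA u} - x^{\calA v} = 0$. Let $B$ denote the $\C$-linear span of these binomials; we want $I_\calA \subseteq B$. Introduce the equivalence relation on $\N^\calA$ by $u \sim v$ iff $\calA u = \calA v$, and for each equivalence class $E$ fix a representative $u_E \in E$. The key observation is that the images $\varphi^*_\calA(z^u) = x^{\calA u}$ for $u$ in different classes are \emph{distinct} Laurent monomials, hence linearly independent in $\C[x^{\pm}]$.

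Now take any $f = \sum_u c_u z^u \in I_\calA$ and regroup according to equivalence classes:
\[
  f \;=\; \sum_{E} \sum_{u \in E} c_u z^u \;=\; \sum_{E} \Bigl(\sum_{u \in E} c_u (z^u - z^{u_E}) \;+\; \Bigl(\sum_{u \in E} c_u\Bigr) z^{u_E} \Bigr).
\]
Each term $z^u - z^{u_E}$ in the first piece lies in $B$. Applying $\varphi^*_\calA$ to both sides, the first piece vanishes and we obtain $0 = \sum_E \bigl(\sum_{u \in E} c_u\bigr) x^{\calA u_E}$. By the linear independence of the distinct Laurent monomials $x^{\calA u_E}$, each inner sum $\sum_{u \in E} c_u$ is zero. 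Thus the second piece vanishes as well, and $f \in B$.

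The only conceptually delicate point is the linear independence observation that distinct equivalence classes produce distinct monomials in the target, but that is built into the definition of the equivalence relation. Everything else is bookkeeping, so I do not anticipate a serious obstacle.
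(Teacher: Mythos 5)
Your proof is correct, and the second part takes a genuinely different route from the paper. The paper fixes a term order $\prec$ and argues by (Noetherian) induction on the initial term: for $f\in I_\calA$ with $\ini_\prec(f)=c_uz^u$, some $v\prec u$ must satisfy $\calA v=\calA u$ (else the leading term cannot cancel under $\varphi^*_\calA$), so one subtracts $c_u(z^u-z^v)$ and repeats. You instead partition the monomials of $\C[z_a\mid a\in\calA]$ into equivalence classes under $u\sim v\iff\calA u=\calA v$, rewrite $f$ as a sum of in-class binomials plus class representatives, and then invoke the linear independence of distinct Laurent monomials in $\C[x^\pm]$ to kill the representative part. Your argument is shorter and is really the statement that for any map of free $\C$-vector spaces sending basis elements to basis elements, the kernel is spanned by differences of basis elements with the same image --- a clean linear-algebra fact. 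The paper's term-order induction is more machinery than strictly necessary here, but it is chosen deliberately: it sets up the proof of Theorem~\ref{Thm:binomial-friendly} (reduced Gr\"obner bases of $I_\calA$ consist of binomials) and the discussion of economical generating sets via $\ini_\prec(I_\calA)$, which your decomposition-by-classes does not directly yield. So both proofs are valid; yours is more elementary, the paper's is more portable to the Gr\"obner-theoretic applications that follow.
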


\begin{proof}
 The image of $\varphi^*_\calA$ is the subalgebra of $\C[x^{\pm}]$ generated by the monomials $\{x^a\mid a\in\calA\}$. 
 Since $\C[x^{\pm}]$ is a domain, the kernel $I_\calA$ is a prime ideal.
 An equivalent way to see this is to note that $X_\calA$ is irreducible (hence its coordinate ring is a domain and
 its defining ideal is prime) as $X_\calA$ is the closure of the image of the irreducible variety $(\C^*)^n$ under the map
 $\varphi_\calA$. 

 For the second statement, let $\prec$ be any term order  on $\C[z_a\mid a\in\calA]$.
 Let $f\in I_\calA$.
 We may write $f$ as 
\[
   f\ =\ c_uz^u\ +\ \sum_{v\prec u} c_v z^v\qquad  c_u\neq 0\,,
\]
 so that $\ini_\prec(f)=c_uz^u$ is the initial term of $f$.
 Then
\[
  0\ =\ \varphi^*_\calA(f)\ =\  
   c_ux^{\calA u}\ +\ \sum_{v\prec u} c_v x^{\calA v}\ .
\]
 There is some $v\prec u$ with $\calA v=\calA u$, for otherwise the term $c_ux^{\calA u}$ is not canceled in
 $\varphi^*_\calA(f)$ and $\varphi^*_\calA(f)\neq 0$.
 Set $\overline{f}:= f-c_u(z^u-z^v)$.
 Then $\varphi^*_\calA(\overline{f})=0$ and $\ini_\prec(\overline{f})\prec\ini_\prec(f)$.

 If the leading term of $f$ were $\prec$-minimal in the initial ideal $\ini_\prec(I_\calA)$, then 
 $\overline{f}$ would be zero, and so $f$ is a scalar multiple of a binomial of the 
 form~\eqref{Eq:toric_binomials}.
 Suppose now that  $\ini_\prec f$ is not minimal in  $\ini_\prec(I_\calA)$ and that every polynomial in $I_\calA$ all of
 whose terms are $\prec$-less than the initial term of $f$ is a linear combination of binomials of the 
 form~\eqref{Eq:toric_binomials}.
 Then $\overline{f}$ is a linear combination of binomials of the 
 form~\eqref{Eq:toric_binomials}, which implies that $f$ is as well, by our induction hypothesis.
 This completes the proof.
\end{proof}

A \demph{monoid} has an associative binary operation and an identity element, but it does not necessarily have inverses.
(A \demph{semigroup} only has an associative binary operation, and not necessarily an identity.)
While many authors use the adjective semigroup when working with toric varieties, the identification below of maximal ideals with monoid
homomorphisms shows the inadequacy of that language.
Note that the complex numbers under multiplication forms a monoid and any group is also a monoid.
Define \defcolor{$\N\calA$} to be the submonoid of $M$ generated by $\calA$.
It consists of all linear combinations of elements of $\calA$ whose coefficients are natural numbers.
Write \defcolor{$\C[\N\calA]$} for the \demph{monoid algebra}, which is the set of complex-linear combinations of
elements of $\N\calA$.
It is also the set of Laurent polynomials whose exponents are from $\N\calA$.

\begin{corollary}\label{Co:CoordinateRing}
 The coordinate ring of the affine toric variety $X_\calA$ is $\C[\N\calA]$.
\end{corollary}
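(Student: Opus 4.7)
The plan is to obtain the isomorphism directly from the first isomorphism theorem applied to the pullback map $\varphi^*_\calA$, whose kernel is already known from Theorem~\ref{Th:Lin_Span_Toric} to be $I_\calA$.

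First I would identify the coordinate ring of $X_\calA$ with $\C[z_a\mid a\in\calA]/I_\calA$. By definition, $X_\calA$ is the Zariski closure of $\varphi_\calA((\C^*)^n)$, and since $I_\calA = \ker\varphi^*_\calA$ consists of the polynomial functions vanishing on that image, it also vanishes on the closure $X_\calA$. Conversely, Theorem~\ref{Th:Lin_Span_Toric} tells us that $I_\calA$ is prime, hence radical, so the Nullstellensatz gives $I(X_\calA) = I_\calA$. Thus the coordinate ring is $\C[z_a\mid a\in\calA]/I_\calA$.

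Next I would compute the image of $\varphi^*_\calA$ explicitly. A general element of $\C[z_a\mid a\in\calA]$ is a $\C$-linear combination of monomials $z^u$ with $u\in\N^\calA$, and each such monomial maps to $\varphi^*_\calA(z^u) = x^{\calA u} = x^{\sum_{a\in\calA}u_a a}$. As $u$ ranges over $\N^\calA$, the exponent $\calA u$ ranges over all nonnegative integer combinations of elements of $\calA$, which is precisely $\N\calA$. Therefore the image of $\varphi^*_\calA$ is the $\C$-linear span of $\{x^b\mid b\in\N\calA\}$, which is the semigroup algebra $\C[\N\calA]$.

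The first isomorphism theorem now gives
\[
   \C[z_a\mid a\in\calA]/I_\calA \;=\; \C[z_a\mid a\in\calA]/\ker\varphi^*_\calA \;\cong\; \mathrm{im}(\varphi^*_\calA) \;=\; \C[\N\calA]\,,
\]
completing the proof. There is no real obstacle here: the only subtle point is the identification $I(X_\calA) = I_\calA$, which is handled by Theorem~\ref{Th:Lin_Span_Toric} and the Nullstellensatz, and the verification that $\mathrm{im}(\varphi^*_\calA) = \C[\N\calA]$, which is a direct monomial bookkeeping already implicit in the discussion preceding Theorem~\ref{Th:Lin_Span_Toric}.
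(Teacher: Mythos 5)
Your proof is correct and takes essentially the same route as the paper: identify $\C[X_\calA]$ with the image of $\varphi^*_\calA$ (the paper simply refers back to the identification made in the proof of Theorem~\ref{Th:Lin_Span_Toric}, while you spell out the first isomorphism theorem step) and observe that this image is the $\C$-span of the monomials $x^b$ for $b\in\N\calA$, namely $\C[\N\calA]$. One small remark: the Nullstellensatz and primality of $I_\calA$ are not actually needed to conclude $I(X_\calA)=I_\calA$, since $I(S)=I(\overline S)$ holds for any subset $S$ of affine space directly from the definition of Zariski closure.
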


\begin{proof}
 The map $\sum_{a\in\calA} a n_a\mapsto \prod_{a\in\calA}(x^a)^{n_a}$ is a bijection between the submonoid $\N\calA$
 and the monoid of monomials $(x^a\mid a\in\calA)$ generated by $\calA$.
 In the proof of Theorem~\ref{Th:Lin_Span_Toric}, we identified the coordinate ring $\C[X_\calA]$ of $X_\calA$ with the
 subring of $\C[x^\pm]$ generated by the monomials $\{x^a\mid a\in\calA\}$, which is the monoid algebra $\C[\N\calA]$
 by the identification of $\N\calA$ with the monomials in $\C[X_\calA]$.
\end{proof}

Theorem~\ref{Th:Lin_Span_Toric} gives an infinite generating set for $I_\calA$.
We seek more economical generating sets.
Suppose that $\calA u=\calA v$ with $u,v\in\N^\calA$.
We define vectors $r,w^{\pm}$.
For $a\in\calA$, set
 \begin{eqnarray*}
  \defcolor{r_a} &:=& \min(u_a,v_a)\,,\\
  \defcolor{w^+_a} &:=& \max(u_a-v_a,0)\,,\quad\mbox{and}\\
  \defcolor{w^-_a} &:=& \max(v_a-u_a,0)\,.
 \end{eqnarray*}
Then $w^+,w^-\in\N^\calA$ and we have $u-v=w^+-w^-$ with $u=r+w^+$ and $v=r+w^-$, and  so
 \begin{equation}\label{Eq:Primitive_relation}
   z^r(z^{w^+} - z^{w^-})\ =\ z^u - z^v\ \in\ I_\calA\,,
 \end{equation}
with $z^r=\gcd\{z^u,z^v\}$. 
Note also that $\calA w^+=\calA w^-$ as $0=\calA(u-v)=\calA(w^+-w^-)$, so that $z^{w^+}-z^{w^-}\in I_\calA$.
In our example where $\calA=\MyVect{0&1&2&3&4}{4&3&2&1&0}$, we have $r=(0,0,1,0,0)^T$, $w^+=(0,1,0,1,0)^T$ and
$w^-=(1,0,0,0,1)^T$, and 
\[
   z_{\MySmVec{1}{3}}z_{\MySmVec{2}{2}}z_{\MySmVec{3}{1}}
   \ -\ 
   z_{\MySmVec{0}{4}}z_{\MySmVec{2}{2}}z_{\MySmVec{4}{0}}
  \ =\ 
   z_{\MySmVec{2}{2}}\bigl(z_{\MySmVec{1}{3}}z_{\MySmVec{3}{1}} \ -\  z_{\MySmVec{0}{4}}z_{\MySmVec{4}{0}}\bigr)
   \ \in\  I_\calA\,.
\]
For $u\in\Z^\calA$, let \defcolor{$u^+$} be the coordinatewise maximum of $u$ and the $0$-vector,
and let \defcolor{$u^-$} be the coordinatewise maximum of $-u$ and the $0$-vector.

\begin{corollary}\label{Co:GeneratorsFromKernel}
  $I_\calA=\langle z^{u^+}-z^{u^-}\mid \calA u=0\rangle$.
\end{corollary}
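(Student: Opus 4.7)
The plan is to establish the two inclusions separately, using Theorem~\ref{Th:Lin_Span_Toric} as the main input and the decomposition already worked out in~\eqref{Eq:Primitive_relation}.

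For the inclusion $\supseteq$, I would simply observe that if $u \in \Z^\calA$ satisfies $\calA u = 0$, then $\calA u^+ - \calA u^- = \calA(u^+ - u^-) = \calA u = 0$, so $\calA u^+ = \calA u^-$. Hence $z^{u^+} - z^{u^-}$ is one of the binomials in~\eqref{Eq:toric_binomials} and therefore lies in $I_\calA$. This shows the right-hand ideal is contained in $I_\calA$.

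For the inclusion $\subseteq$, I would invoke Theorem~\ref{Th:Lin_Span_Toric} to reduce to showing that every binomial $z^u - z^v$ with $u, v \in \N^\calA$ and $\calA u = \calA v$ lies in $\langle z^{w^+} - z^{w^-} \mid \calA w = 0\rangle$. Given such a pair, set $w := u - v \in \Z^\calA$, so that $\calA w = 0$. Using the vectors $r, w^+, w^-$ from the discussion preceding~\eqref{Eq:Primitive_relation}, I have $u = r + w^+$ and $v = r + w^-$, and therefore
\[
   z^u - z^v \ =\ z^r\bigl(z^{w^+} - z^{w^-}\bigr)\,,
\]
which exhibits $z^u - z^v$ as an element of the right-hand ideal, since $w^+$ and $w^-$ are precisely the positive and negative parts of $w$ and $\calA w = 0$.

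There is no real obstacle: the hard work has been done in Theorem~\ref{Th:Lin_Span_Toric} (reducing $I_\calA$ to a span of binomials) and in the elementary decomposition~\eqref{Eq:Primitive_relation} (expressing a general binomial as a monomial multiple of a ``pure'' kernel binomial). The only subtle point is remembering that Theorem~\ref{Th:Lin_Span_Toric} gives a vector space spanning set, not an ideal generating set, so one must explicitly invoke it before factoring out $z^r$.
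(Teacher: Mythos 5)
Your proof is correct and follows exactly the route the paper has implicitly set up: the paper does not print a proof body for this corollary, but the preceding discussion establishes the factorization~\eqref{Eq:Primitive_relation} for precisely this purpose, and combined with Theorem~\ref{Th:Lin_Span_Toric} it yields the result as you describe. Your closing remark about the distinction between a vector-space spanning set and an ideal generating set is the right thing to flag; it is the reason Theorem~\ref{Th:Lin_Span_Toric} must be cited before factoring out the monomial $z^r$.
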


Thus $I_\calA$ is generated by binomials coming from the integer kernel of the matrix $\calA$. 

\begin{theorem}\label{Thm:binomial-friendly}
  Any reduced Gr\"obner basis of $I_\calA$ consists of binomials.
\end{theorem}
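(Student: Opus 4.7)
The plan is to show that Buchberger's algorithm, when started from a binomial generating set, never leaves the world of binomials, and that the final post-processing into a reduced Gröbner basis also preserves this property. By Corollary~\ref{Co:GeneratorsFromKernel} (and already by Theorem~\ref{Th:Lin_Span_Toric}), $I_\calA$ has a generating set consisting of binomials $z^u - z^v$ with $\calA u = \calA v$, so this is a viable starting point.

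First I would verify two closure properties. If $f = z^\alpha - z^\beta$ and $g = z^\gamma - z^\delta$ are binomials whose initial terms (under a chosen term order $\prec$) are $z^\alpha$ and $z^\gamma$, and $z^\mu = \lcm(z^\alpha, z^\gamma)$, then the S-polynomial
\[
  S(f,g) \;=\; \tfrac{z^\mu}{z^\alpha} f \;-\; \tfrac{z^\mu}{z^\gamma} g \;=\; -\,\tfrac{z^\mu}{z^\alpha} z^\beta \;+\; \tfrac{z^\mu}{z^\gamma} z^\delta
\]
is again a binomial (or zero). Second, reducing a binomial $z^p - z^q$ by $f = z^\alpha - z^\beta$ with $z^\alpha \mid z^p$ replaces $z^p$ by $(z^p/z^\alpha) z^\beta$, leaving a binomial $(z^p/z^\alpha) z^\beta - z^q$ (or a monomial, or zero, if cancellation occurs). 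Iterating, the division algorithm sends binomials to binomials. Running Buchberger's algorithm from the binomial generating set therefore terminates with a Gröbner basis $G$ all of whose elements are binomials.

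Next I would pass from $G$ to the reduced Gröbner basis. The standard procedure is to discard any $g \in G$ whose initial monomial is properly divisible by the initial monomial of some other element, normalize the remaining initial coefficients to $1$ (trivial since each is already $\pm 1$), and then fully reduce each surviving element modulo the others. For a surviving binomial $z^\alpha - z^\beta$, the leading monomial $z^\alpha$ is not reducible (else it would have been discarded), so reduction only acts on the trailing monomial $z^\beta$. But by the second closure property, reducing a monomial by a binomial produces a monomial, and iterating keeps the result a single monomial (or zero). Thus the fully reduced form is still a binomial $z^\alpha - z^{\widetilde{\beta}}$, and the reduced Gröbner basis consists of binomials.

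The most delicate step is the last one: one has to observe that the trailing term of a binomial remains a single monomial throughout the reduction pass, rather than splitting into a longer polynomial. This follows directly from the shape of each reduction step (monomial replaced by monomial), but it is the only place where the binomial structure could plausibly be lost. Since reduced Gröbner bases are unique for a fixed term order, and the one we constructed is binomial, every reduced Gröbner basis of $I_\calA$ consists of binomials, proving the theorem.
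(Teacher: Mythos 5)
Your proposal is correct and is exactly the argument the paper sketches: it observes after the theorem statement that Buchberger's algorithm is binomial-friendly (S-polynomials and reductions of binomials are again binomials) and leaves the details to Exercise~7, which you have fully worked out, including the delicate final step that tail-reduction preserves the binomial shape. One small sharpening you could add: the trailing monomial can never reduce to zero, since $I_\calA$ is prime and contains no monomials (as $\varphi^*_\calA(z^u)=x^{\calA u}\neq 0$), so the reduced elements are honest binomials and not merely monomials.
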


The point of this theorem is that Buchberger's algorithm is binomial-friendly.
That is, if $f$ and $g$ are binomials, then their $S$-polynomial is again a binomial, and the
reduction of one binomial by another is again a binomial.
You are asked to prove Theorem~\ref{Thm:binomial-friendly} in Exercise~\ref{Exer:binomial-friendly}.

It is an important problem to compute or to find relatively small Gr\"obner bases for toric ideals.
By Corollary~\ref{Co:GeneratorsFromKernel}, these are
given by special subsets of the integer kernel $\{u\in\Z^\calA\mid \calA u = 0\}$ of $\calA$.
For example, a reduced Gr\"obner basis for the ideal of $k\times m$ matrices of rank 1 is given by
%
%
\[
    \{\underline{z_{a,b}z_{c,d}}-z_{a,d}z_{c,b}\mid 1\leq a<c\leq k\mbox{ and }1\leq b<d\leq m\}\,,
\]
where the term order is the degree reverse lexicographic order with the variables ordered by 
$z_{a,b}\prec z_{c,d}$ if $a<c$ or $a=c$ and $b>d$ (the leading term is underlined).
This is the set of all $2\times 2$ minors of the $k\times m$ matrix $(z_{ij})_{i=1,\dotsc,k}^{j=1,\dotsc,m}$ of indeterminates.

By Corollary~\ref{Co:CoordinateRing}, the coordinate ring of the toric variety $X_\calA$ is the
algebra $\C[\N\calA]\simeq \C[x^a\mid a\in\calA]$.
This subalgebra of the ring $\C[x^{\pm}]=\C[M]$ of Laurent polynomials is spanned by monomials.
Let us generalize this.
Given a finitely generated submonoid $S$ of $M$ (this is a subset of $M$ that contains $0$ and is closed under addition), write
$\defcolor{\C[S]}\subset\C[x^{\pm}]$ for the  \demph{monoid algebra} of $S$.
This is the set of all complex-linear combinations of elements of $S$, where the multiplication is distributive and induced
by the monoid operation of $S$.
Choosing a generating set $\calA$ of $S$, so that $S=\N\calA$, realizes $\C[S]$ as the coordinate ring of the affine
toric variety $X_\calA\subset\C^\calA$. 
Then the usual algebraic-geometry dictionary implies that
\[
   X_\calA\ =\ \spec(\C[\N\calA])\,.
\]
Thus $\spec(\C[S])$ is an affine toric variety without a preferred embedding into affine space.
Under the algebraic-geometry dictionary, the closed points $X_\calA(\C)$ of $X_\calA$ correspond to the maximal ideals
of $\C[\N\calA]$. 

There is a second perspective on these points, via monoid homomorphisms.
By Exercise~\ref{Exer:Monoid}, maximal ideals correspond to monoid homomorphisms \defcolor{$\Hom_{m}(S,\C)$}, from
$S$ to $\C$ (additive on $S$ and multiplicative on $\C$).
An element $\frakm$ of $\Hom_{m}(S,\C)$ is a function $\frakm\colon S\to\C$ such that $\frakm(0)=1$ and for $a,b\in S$,
$\frakm(a+b)=\frakm(a)\cdot\frakm(b)$.

The map $\varphi_\calA$~\eqref{Eq:varphiA} restricts to the real torus 
$\defcolor{(\R^*)^n}\subset(\C^*)^n$ and gives a map $\varphi_\calA\colon(\R^*)^n\to\R^\calA$.
The closure of the image is the \demph{real affine toric variety $X_\calA(\R)$}.
If we write \defcolor{$\R_>$} for the positive real numbers and \defcolor{$\R_\geq$} for the nonnegative real numbers,
then we may also restrict $\varphi_\calA$ to $\R^n_>$.
Note that $\varphi_\calA(\R^n_>)\subset\R_\geq^\calA$, the positive orthant in $\R^\calA$.
The \demph{nonnegative affine toric variety $X_{\calA,\geq}$} is the closure of the image.
We have the following maps
 \begin{equation}\label{Eq:AffineFunctoriality}
   X_{\calA,\geq}\ \lhra\ X_\calA(\R)\ \lhra\ X_\calA(\C)\ \relbar\joinrel\twoheadrightarrow\ X_{\calA,\geq}\ .
 \end{equation}
These are induced by the maps 
 \begin{equation}\label{Eq:semiGroupMaps}
   \R_\geq\ \lhra\ \R\ \lhra\ \C\ \relbar\joinrel\twoheadrightarrow\ \R_\geq\,,
 \end{equation}
with the last map $z\mapsto|z|$.
The maps~\eqref{Eq:AffineFunctoriality} also come from the 
identification of affine toric varieties with monoid homomorphisms and the sequence of maps
of monoids~\eqref{Eq:semiGroupMaps}.
As the composition of these monoid maps is the identity, the composition~\eqref{Eq:AffineFunctoriality} is also the  
identity.


\subsection*{Exercises}

\begin{enumerate}[1.] 
 \item \label{Ex:injective}
       Show that the monomial map $\varphi_\calA\colon(\C^*)^n\to(\C^*)^\calA$~\eqref{Eq:varphiA} is injective if and only
       if $\calA$ generates $M$.
       When $\calA$ does not generate $M$, identify the kernel of $\varphi_\calA$.

 \item      
   Let $n\in\N$ be a natural number.
   Describe generators of the toric ideal $I_\calA$ for the point set $\calA=\left(\begin{matrix}0&1&2&3&\cdots&n\end{matrix}\right)$.
   Do the same for the point set 
   $\calA=\MyVect{0&1&2&\cdots&n}{n&\cdots&2&1&0}$.

 \item \Maroon{{\sf Breakfast Problem.}} Suppose that $\calA\subset\Z^4$ 
    is represented by the $4\times 8$ matrix
   \[
      \left(\begin{matrix} 
      1&1&1&1&1&1&1&1\\
      1&1&1&1&0&0&0&0\\
      1&1&0&0&1&1&0&0\\
      1&0&1&0&1&0&1&0 \end{matrix}\right)\ .
   \]
    Find a set of nine linearly independent generators of the toric ideal $I_\calA$.

 \item Let $\calA\subset\Z^6=(\Z^2)^3$ be represented by  the $6\times 8$ matrix
   \[
      \left(\begin{matrix} 
      1&1&1&1&0&0&0&0\\
      0&0&0&0&1&1&1&1\\
      1&1&0&0&1&1&0&0\\
      0&0&1&1&0&0&1&1\\
      1&0&1&0&1&0&1&0\\
      0&1&0&1&0&1&0&1 \end{matrix}\right)\ .
   \]
    Find linearly independent generators of the toric ideal $I_\calA$.
    Hint: The even (respectively odd) numbered rows give the vertices of the $3$-cube.
    What is $X_\calA$?
    For this, consider the map $\varphi_\calA$ where the rows correspond to the variables $x_0,x_1,y_0,y_1,z_0,z_1$ and the
    columns to the variables $p_{000},p_{100},\dotsc,p_{111}$.

 \item  Show that the collection of $2\times 2$ minors of the  $k\times m$ matrix
   $(z_{ij})_{i=1,\dotsc,k}^{j=1,\dotsc,m}$ of indeterminates forms a reduced Gr\"obner basis for the toric ideal of the
   variety $X_\calA$ of matrices of rank 1, where the term order is degree reverse lexicographic with the variables
   ordered by $z_{a,b}\prec z_{c,d}$ if $a<c$ or $a=c$ and $b>d$.
   What about other term orders?

 \item \label{Exer:ToricIdeals}
   Identify a generating set and a reduced Gr\"obner basis for the toric ideal $I_\calA$ given by as many of the
   following six subsets of $\Z^2$ as you can. 
   The origin is at the lower left of each figure.
   You may find computer software useful.
\[
  \includegraphics{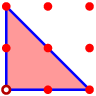}\qquad
  \includegraphics{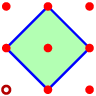}\qquad
  \includegraphics{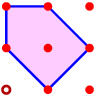}\qquad
  \includegraphics{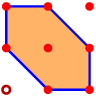}\qquad
  \includegraphics{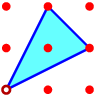}\qquad
  \raisebox{-20.5pt}{\includegraphics{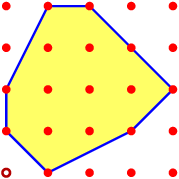}}
\]

 \item  \label{Exer:binomial-friendly}
  Work out the details of the suggested proof of Theorem~\ref{Thm:binomial-friendly}.
  Explain why `reduced' is necessary for the conclusion.

 \item
   A submonoid $S\subset M$ is \demph{saturated} if for any $a\in M$ and $k\in \N$ with $k\neq 0$, if $ka\in S$, then $a\in S$.
   Show that if $\C[S]$ is normal, then $S$ is saturated.
   Can you prove the converse of this statement?

 \item \label{Exer:Monoid}
       Let $S$ be a finitely generated submonoid of $M$.
       Show that every maximal ideal $\frakm$ of $\C[S]$ restricts to a monoid homomorphism 
       from $S$ to $\C$, and vice-versa.
       Hint: use that maximal ideals correspond to algebra maps $\C[S]\to\C$.

 \item  For $\calA\subset M$ finite, show that 
      $X_\calA(\R)=\Hom_{m}(\N\calA,\R)$ and 
      $X_{\calA,\geq}=\Hom_{m}(\N\calA,\R_\geq)$.

 \item Use the embedding $x\mapsto (x,x^{-1})$ of the torus $\C^*$ into $\C^2$, or any other method, to identify the
   coordinate ring of the torus $\C^*$ with $\C[x,x^{-1}]$.
   Deduce that the coordinate ring of $(\C^*)^n$ may be identified with $\C[x_1,x_1^{-1},\dotsc,x_n,x_n^{-1}]$.
   Show that this is the complex group ring $\C[M]$ of the free abelian group $M$ of characters of the torus $(\C^*)^n$.
   Harder: Can you relate algebraic structures of $\C[M]$ to the group structure on $(\C^*)^n$?
   That is, what do the product, inverse, and identity element of  $(\C^*)^n$ correspond to on $\C[M]$ and $M$.
\end{enumerate}

\section{Toric Projective Varieties and Solving Equations}\label{S:two}

Some affine toric varieties may be considered to be projective varieties---this is when they are stable under the
multiplication by scalars on their ambient vector space.
In this case, they have particularly attractive properties.
Such projective toric varieties also provide a means to prove one of the signature results related to toric varieties;
Kushnirenko's Theorem about the number of solutions to a system of sparse polynomial equations.

\subsection{Toric Varieties in Projective Space}\label{SS:ProjectiveTV}
Projective space $\defcolor{\P^m}=\defcolor{\P(\C^{m+1})}$ is the set of one-dimensional linear subspaces of $\C^{m+1}$.
Since a one-dimensional linear subspace \defcolor{$\ell$} is generated by any nonzero point of $\ell$, and scalar
multiplication by elements of $\C^*$ acts simply transitively on these nonzero points of $\ell$, and freely on
$\C^{m+1}\smallsetminus\{0\}$, we may identify $\P^m$ with the quotient $(\C^{m+1}\smallsetminus\{0\})/\C^*$.
Projective space is equipped with \demph{homogeneous coordinates} $[z_0:\dotsb:z_m]$, were we identify 
$[z_0:z_1:\dotsb:z_m]$ with $[tz_0:tz_1:\dotsb:tz_m]$ for any nonzero scalar $t\in\C^*$.
An affine variety $X\subset\C^{m+1}$ corresponds to a projective variety in $\P(\C^{m+1})=\P^m$ when $X$ is
homogeneous under the $\C^*$-action on $\C^{m+1}$ given by scalar multiplication.
We will call such an affine toric variety $X\subset\C^{m+1}$ the \demph{affine cone} over the corresponding projective toric
variety $X\subset\P^{m}$.

We claim that an affine toric variety $X_\calA\subset\C^\calA$ is homogeneous when the set $\calA$ lies on an affine
hyperplane. 
By this, we mean that there is some $w\in N=\Z^n$ with
\[
   w\cdot  a \ =\ w\cdot  b \qquad \mbox{ for all } a,b\in\calA\,,
\]
and this common value \defcolor{$c$} is nonzero.
(Here, an affine hyperplane does not contain the origin.)
Then, under the composition of the cocharacter, $t\mapsto t^w$ and the map $\varphi_\calA$, $t\in\C^*$ acts as
multiplication by the scalar $t^c$ on $\C^\calA$ as  $\varphi_\calA(t^w) = (t^{w\cdot  a}=t^c\mid a\in\calA)$, and thus
$X_\calA\subset\C^\calA$ is homogeneous under scalar multiplication.
For another way to see this, suppose that $u,v\in\N^\calA$ are integer vectors with $\calA u = \calA v$.
Then $w\cdot\calA u = w\cdot\calA v$, which implies that 
\[
   c\sum_{a\in \calA} u_a\ =\  c\sum_{a\in \calA} v_a\,,
\]
and thus $z^u-z^v\in I_\calA$ is homogeneous.
By Theorem~\ref{Th:Lin_Span_Toric}, we obtain the following.

\begin{corollary}
  If $\calA$ lies on an affine hyperplane, then $I_\calA$ is a homogeneous ideal and $X_\calA$ is a projective
  subvariety of\/ $\defcolor{\P^\calA}:=\P(\C^\calA)$.
\end{corollary}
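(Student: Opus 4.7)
The corollary packages up the observations made in the two paragraphs preceding it, so my plan is essentially to assemble those into a clean argument.

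The plan is to prove homogeneity of $I_\calA$ first, and then deduce the projective statement. By hypothesis there exists $w\in N=\Z^n$ and a nonzero integer $c$ with $w\cdot a=c$ for every $a\in\calA$. I would begin by checking that each binomial generator $z^u-z^v$ appearing in the spanning set~\eqref{Eq:toric_binomials} is homogeneous. Given $u,v\in\N^\calA$ with $\calA u=\calA v$, apply $w\cdot(-)$ to both sides; since $w\cdot(\calA u)=\sum_{a\in\calA}(w\cdot a)u_a=c\sum_a u_a$ and similarly for $v$, the equality $\calA u=\calA v$ forces $\sum_a u_a=\sum_a v_a$ (using $c\neq 0$). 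Thus $z^u$ and $z^v$ have the same total degree, so $z^u-z^v$ is homogeneous in the standard grading of $\C[z_a\mid a\in\calA]$.

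Next I would invoke Theorem~\ref{Th:Lin_Span_Toric}, which tells us that $I_\calA$ is spanned as a $\C$-vector space by exactly these binomials. A vector-space span of homogeneous elements is a homogeneous subspace, and $I_\calA$ is already an ideal, so this makes $I_\calA$ a homogeneous ideal.

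Finally, I would recall that the vanishing locus in $\C^\calA\smallsetminus\{0\}$ of a homogeneous ideal is stable under the $\C^*$-scaling action, hence descends to a well-defined closed subvariety of $\P^\calA=\P(\C^\calA)$; this gives the projective toric variety $X_\calA\subset\P^\calA$. As a sanity check (and an alternative route to the same conclusion) one can observe directly that the cocharacter $t\mapsto t^w$ of $(\C^*)^n$, composed with $\varphi_\calA$, sends $t$ to $(t^{w\cdot a}\mid a\in\calA)=(t^c,\dots,t^c)$, which acts on $\C^\calA$ as scalar multiplication by $t^c$; since $\varphi_\calA((\C^*)^n)$ is therefore preserved by this $\C^*$-action, so is its Zariski closure $X_\calA$.

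There is no real obstacle here — the only subtle point is remembering that $c\neq 0$ is essential (an affine hyperplane is not allowed to pass through the origin), for otherwise $c\sum_a u_a=c\sum_a v_a$ would give no information and homogeneity would fail. Everything else is bookkeeping around Theorem~\ref{Th:Lin_Span_Toric}.
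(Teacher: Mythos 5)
Your proposal is correct and takes essentially the same approach as the paper, which (in the two paragraphs preceding the corollary) gives both the cocharacter/scalar-multiplication argument and the binomial-degree argument via Theorem~\ref{Th:Lin_Span_Toric}; you simply present them in the opposite order, leading with the binomial computation and relegating the cocharacter action to a sanity check. The reminder that $c\neq 0$ is exactly the point the paper also emphasizes by stipulating that an affine hyperplane does not pass through the origin.
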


\begin{example}\label{Ex:TwCubic}
 Suppose that $\calA$ is represented by the matrix
 $\MyVect{3&2&1&0}{0&1&2&3}$, which are the points $a$ of $\N^2$ where $(1,1)\cdot a=3$.
\begin{figure}[htb]
   \begin{picture}(60,60)
    \put(0,0){\includegraphics{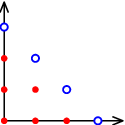}}
    \put(42,43){$\calA$}
    \put(40,47){\vector(-1,0){34}}
    \put(40,43.5){\vector(-2,-1){19}}
    \put(43.5,40){\vector(-1,-2){9.5}}
    \put(47,40){\vector(0,-1){34}}
  \end{picture}
  \qquad\qquad
  \raisebox{-10pt}{\begin{picture}(98,80)(-17,0)
    \put(0,0){\includegraphics{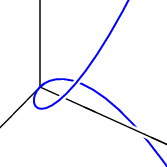}}
    \put(-17,27){$z_{\MySmVec{2}{1}}$} \put(76,19){$z_{\MySmVec{1}{2}}$} \put(21,74){$z_{\MySmVec{0}{3}}$}
  \end{picture}}

\caption{Exponents $\calA$ and the twisted cubic.}
 \label{F:calA_TwistedCubic}
\end{figure}
Then $\varphi_\calA(x,y)=(x^3,x^2y,xy^2,y^3)\in\C^\calA\simeq\C^4$, and the closure $X_\calA$ of its image in
$\P^\calA=\P^3$ is the twisted cubic.
If $[z_{\MySmVec{3}{0}}:z_{\MySmVec{2}{1}}:z_{\MySmVec{1}{2}}:z_{\MySmVec{0}{3}}]$ are the coordinates of $\P^\calA$, then
the homogeneous toric ideal $I_\calA$ is generated by  
 \begin{equation}\label{Eq:TwistedCubicEquations}
   z_{\MySmVec{3}{0}}z_{\MySmVec{1}{2}} - z_{\MySmVec{2}{1}}^2\,,\ \ 
   z_{\MySmVec{3}{0}}z_{\MySmVec{0}{3}} - z_{\MySmVec{2}{1}}z_{\MySmVec{1}{2}}\,,\ \mbox{ and }\ 
   z_{\MySmVec{2}{1}}z_{\MySmVec{0}{3}} - z_{\MySmVec{1}{2}}^2\,,
 \end{equation}
which correspond to the vectors $(1,-2,1,0)^T$, $(1,-1,-1,1)^T$, and $(0,1,-2,1)^T$ in $\ker\calA$, which are also the
primitive relations among the elements of $\calA$,  
\[
   \begin{pmatrix}3\\0\end{pmatrix} + \begin{pmatrix}1\\2\end{pmatrix}
     \ =\  
    2 \begin{pmatrix}2\\1\end{pmatrix},
   \quad
   \begin{pmatrix}3\\0\end{pmatrix} + \begin{pmatrix}0\\3\end{pmatrix}
      \ =\ 
   \begin{pmatrix}2\\1\end{pmatrix} + \begin{pmatrix}1\\2\end{pmatrix},
   \quad\mbox{ and }\quad
   \begin{pmatrix}2\\1\end{pmatrix} + \begin{pmatrix}0\\3\end{pmatrix}
      \ =\ 
   2\begin{pmatrix}1\\2\end{pmatrix}.
\]
Here, $\Z\calA$ is a full rank sublattice of index 3 in $\Z^2$, which you are asked to show in Exercise~\ref{Exer:TwCubic}.
Also, the kernel of $\varphi_\calA$ is $\{ \MyVect{1}{1},\MyVect{\zeta}{\zeta},\MyVect{\zeta^2}{\zeta^2}\}$, where 
$\defcolor{\zeta}:=e^{\frac{2\pi\sqrt{-1}}{3}}$ is a cube root of 1, and thus we have that 
$\ker \varphi_\calA \simeq \Hom_{g}(\Z^2/\Z\calA,\C^*)$.
Choosing $\MyVect{3}{0}$ and $\MyVect{-1}{\tsp 1}=\MyVect{2}{1}-\MyVect{3}{0}$ as a basis for $\Z\calA$ (and identifying it
with $\Z^2$), the set $\calA$ becomes the columns of the matrix
$(\begin{smallmatrix}1&1&1&1\\0&1&2&3\end{smallmatrix})$, which we draw with the 
first coordinate vertical.
\[
  \begin{picture}(110,42)(-38,0)
   \put(0,0){\includegraphics{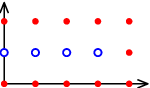}}
   \put(-38,14){$\calA$} \put(-27,17){\vector(1,0){25}}
  \end{picture}
\]
This is the set $\calA$ for the affine rational normal curve of Example~\ref{Ex:AffineTV} lifted to an affine hyperplane
in $\Z^2$ by prepending a new first coordinate of 1 to each element of $\calA$.\qed
\end{example}

Let $\calA\subset\Z^n$ be a finite set.
Its \demph{lift}, $\defcolor{\calA^+}\subset\Z^{1+n}$, is the set 
\[
   \defcolor{\calA^+}\ :=\ \{ (1,a) \mid a\in\calA\}\,,
\]
which lies on an affine hyperplane in $\Z^{1+n}$.
The map $\varphi_{\calA^+}\colon\C^*\times(\C^*)^n\to\P^\calA$ is given by
 \begin{equation}\label{Eq:liftMap}
  \varphi_{\calA^+}(t,x)\ =\ [tx^a\mid a\in\calA]\,.
 \end{equation}
Observe that the set of differences $\{ a-b\mid a,b\in\calA\}$ spans $\Z^n$ if and only if the set $\calA^+$ of vectors
spans $\Z^{1+n}$. 

\begin{example}\label{Ex:liftedHex}
 Suppose that $\calA$ is represented by the matrix
 $\MyVect{0&1&1&0&-1&-1&\tsp 0}{0&0&1&1&\tsp 0&-1&-1}$.
 Then $\calA$ consists of the integer points of the hexagon in Figure~\ref{F:lift}.
 Figure~\ref{F:lift} also shows the lift of this hexagon, where the first coordinate is
 vertical in the lift.\qed
\end{example}
\begin{figure}[htb]
  \includegraphics{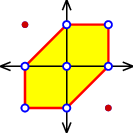} \qquad \includegraphics{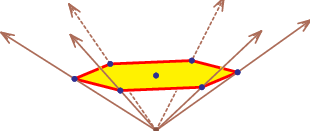}
 \caption{The hexagon and its lift.}
 \label{F:lift}
\end{figure}

In  Exercise~\ref{Exer:Lift}, you are asked to show that any finite set $\calA\subset\Z^{1+n}$ lying on an affine
hyperplane has the form $\calB^+$ in appropriate coordinates for $\Z\calA$.
Exercise~\ref{Exer:Row-span} gives another (equivalent) characterization of a set $\calA$ lying on an affine
hyperplane.

We turn to a geometric description of the generators of a homogeneous toric ideal $I_\calA$.
A sum  $\sum_{a\in\calA} a\lambda_a$ where $\sum_{a\in\calA} \lambda_a=1$ and $0\leq \lambda_a$ is a 
\demph{convex combination} of the points of $\calA$.
The \demph{convex hull} of $\calA\subset\Z^{n}$ is the set of all convex combinations of the points of $\calA$,
\[
   \defcolor{\conv(\calA)}\ :=\ 
    \Bigl\{ \sum_{a\in\calA} a\lambda_a \ \Bigl\vert\   \sum_{a\in\calA} \lambda_a=1
        \mbox{ and } 0\leq \lambda_a \mbox{ for all }a\in\calA\Bigr\}\,.
\]
This convex hull is a polytope with integer vertices (a \demph{lattice polytope}), and its vertices are a subset of
$\calA$. 
Lattice polygons were depicted in Exercise~\ref{Exer:ToricIdeals} of Section~\ref{S:one}
and in Figure~\ref{F:lift}.
Below are a lattice simplex
($\calA$ corresponds to the matrix $\MySmTVec{0&1&0&0}{0&0&1&0}{0&0&0&1}$),
a lattice cube 
($\calA$ corresponds to the matrix $\MySmTVec{0&1&0&0&1&0&0&1}{0&0&1&0&1&0&1&1}{0&0&0&1&0&0&1&1}$),
and a lattice octahedron
($\calA$ corresponds to the matrix $\MySmTVec{0&1&-1&0&0&0&0}{0&0&0&1&-1&0&0}{0&0&0&0&0&1&-1}$).
 \begin{equation}\label{Eq:SomePolytopes}
    \raisebox{-30pt}{\includegraphics[height=60pt]{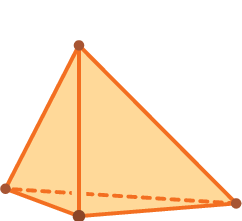}} \qquad\qquad
   \raisebox{-30pt}{\includegraphics[height=60pt]{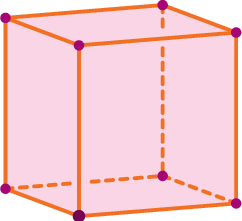}}        \qquad\qquad
  \raisebox{-48pt}{\includegraphics[height=95pt]{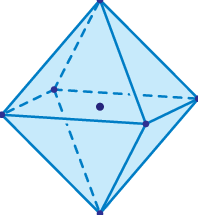}}
 \end{equation}

Let $\calA\subset\Z^{1+n}$ lie on an affine hyperplane.
Suppose that $u,v\in\N^\calA$ are nonzero vectors with $u\neq v$ and $\calA u = \calA v$ so that $z^u-z^v$ is a binomial in
$I_\calA$. 
By our assumption on $\calA$, the toric ideal $I_\calA$ is homogeneous, so that $\deg z^u = \deg z^v$.
Let $d:=\sum_a u_a=\sum_a v_a$ be this degree.
Writing $\defcolor{\lambda_a}:=\frac{1}{d}u_a$ and $\defcolor{\mu_a}:=\frac{1}{d}v_a$ for $a\in\calA$, we have 
\[
   \sum_{a\in\calA} a\lambda_a \ =\ \sum_{a\in\calA} a \mu_a \,.
\]
As $\lambda_a,\mu_a\geq 0$ are rational numbers and $\sum_a \lambda_a=\sum_a\mu_a=1$, 
this is a point in $\conv(\calA)$ having two distinct representations as a rational convex combination of
the points of $\calA$. 

Suppose that $\calA w=0$.
Then $z^{w^+}-z^{w^-}$ is a generator for $I_\calA$, by Corollary~\ref{Co:GeneratorsFromKernel}.
Note that $\supp(w^+)$ is disjoint from $\supp(w^-)$.
(Here, the support \defcolor{$\supp(v)$}
of a vector $v$ is the set of indices of nonzero coordinates.)
Then the above construction (applied to $w^+$ and $w^-$) gives
\[
   \sum_{a\in\supp(w^+)} a\lambda_a \ =\ \sum_{a\in\supp(w^-)} a \mu_a \,,
\]
which is a rational point common to the convex hulls of two disjoint subsets of $\calA$.

\begin{example}
 Suppose that $\calA$ is represented by the matrix
 $\MyVect{0&1&1&2&2}{1&0&2&0&1}$.
 The point $(\frac{3}{2},\frac{1}{2})^T$ lies in the convex hull of two disjoint subsets of $\calA$,
 $\MyVect{1&2}{0&1}$ and  $\MyVect{0&1&2}{1&2&0}$, respectively.
\[
  \begin{picture}(235,72)
    \put(0,0){\includegraphics{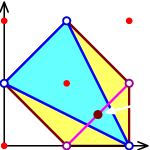}}
    \put(91,28){\vector(-4,-1){40}}
    \put(93,26){$\MyVect{3/2}{1/2}=\tfrac{1}{2}\MyVect{1}{0}+\tfrac{1}{2}\MyVect{2}{1}$}
    \put(93, 8){${\color{white}\MyVect{3/2}{1/2}}=\tfrac{1}{6}\MyVect{0}{1}+\tfrac{1}{6}\MyVect{1}{2}
                                                 +\tfrac{2}{3}\MyVect{2}{0}$}
  \end{picture}
\]
These coincident convex combinations give the binomial 
$z_{\MySmVec{1}{0}}^3z_{\MySmVec{2}{1}}^3-z_{\MySmVec{0}{1}}z_{\MySmVec{1}{2}}z_{\MySmVec{2}{0}}^4$ in $I_{\calA^+}$.
\qed
\end{example}

We summarize this discussion.

\begin{proposition}\label{P:coincidentConvex}
 Suppose that $\calA$ lies on an affine hyperplane.
 Homogeneous generators $z^{w^+}-z^{w^-}$ of $I_\calA$ of Corollary~\ref{Co:GeneratorsFromKernel} correspond to
 rational points of $\conv(\calA)$ lying in the intersection of convex hulls of two disjoint subsets of $\calA$.
\end{proposition}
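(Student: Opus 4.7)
The plan is to verify the correspondence in both directions, essentially unpacking the discussion that precedes the proposition and recording that the construction given there is reversible. Both directions rely on two elementary facts: the disjoint-support condition $\supp(w^+)\cap\supp(w^-)=\varnothing$ that defines $w^\pm$ from $w\in\ker\calA$, and the fact that lying on an affine hyperplane forces $\sum_{a} w^+_a=\sum_{a} w^-_a$, so the two normalized expressions are both genuine convex combinations.

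For the forward direction, I would start with a generator $z^{w^+}-z^{w^-}$ with $\calA w=0$ as in Corollary~\ref{Co:GeneratorsFromKernel}. Since $\calA$ lies on an affine hyperplane, there is some $w_0\in N$ and a nonzero constant $c$ with $w_0\cdot a=c$ for every $a\in\calA$. Pairing $w_0$ with the identity $\calA w^+=\calA w^-$ yields $c\sum_a w^+_a = c\sum_a w^-_a$, so the common value $d:=\sum_a w^+_a=\sum_a w^-_a$ is a positive integer (positive because $w\neq 0$ and the supports are disjoint). Setting $\lambda_a:=w^+_a/d$ and $\mu_a:=w^-_a/d$ gives two probability distributions on $\calA$ supported on the disjoint subsets $\supp(w^+)$ and $\supp(w^-)$, and dividing $\calA w^+=\calA w^-$ by $d$ shows that $\sum_a a\lambda_a=\sum_a a\mu_a$ is a single rational point of $\conv(\calA)$ lying in $\conv(\supp(w^+))\cap\conv(\supp(w^-))$.

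For the reverse direction, I would take disjoint subsets $S_1,S_2\subset\calA$ whose convex hulls share a rational point $p$, written as $p=\sum_{a\in S_1}a\lambda_a=\sum_{a\in S_2}a\mu_a$ with rational $\lambda_a,\mu_a\geq 0$ summing to $1$. Choosing a common denominator $D$ for all the $\lambda$'s and $\mu$'s and setting $u_a:=D\lambda_a$ for $a\in S_1$, $v_a:=D\mu_a$ for $a\in S_2$, and extending by zero elsewhere, I obtain $u,v\in\N^\calA$ with disjoint supports, $\sum_a u_a=\sum_a v_a=D$, and $\calA u=Dp=\calA v$. Then $w:=u-v\in\ker\calA$ has $w^+=u$ and $w^-=v$ precisely because the supports are disjoint, so $z^{w^+}-z^{w^-}$ is a generator of the form produced by Corollary~\ref{Co:GeneratorsFromKernel}, and its associated rational point is exactly $p$.

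I do not expect any serious obstacle: the content is definitional once we know that $\calA\subset$ affine hyperplane forces $\deg z^{w^+}=\deg z^{w^-}$. The only care required is the bookkeeping to make the correspondence well-defined, in particular checking that scaling $w$ by a positive integer (equivalently, replacing the convex combination by an equivalent one with a larger common denominator) produces the same point of $\conv(\calA)$, so that the correspondence is between points of $\conv(\calA)$ and generators up to such scaling. This minor ambiguity can be stated at the end without affecting the main claim.
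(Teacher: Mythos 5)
Your proposal is correct and follows essentially the same route as the paper, which proves the proposition by summarizing the discussion immediately preceding it: normalize by the common degree $d=\sum_a w^+_a=\sum_a w^-_a$ (forced by the affine hyperplane condition), set $\lambda_a=w^+_a/d$ and $\mu_a=w^-_a/d$, and observe that disjointness of $\supp(w^+)$ and $\supp(w^-)$ yields a rational point common to the convex hulls of two disjoint subsets of $\calA$. You go slightly beyond the paper by spelling out the reverse direction (clearing denominators to recover a kernel vector with $w^+=u$, $w^-=v$), which the paper leaves implicit in the word ``correspond''; this is a reasonable and complete addition rather than a different approach.
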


For a finite subset $\calA\subset M\simeq\Z^n$, a cocharacter $w\in N$ (or any vector in $\defcolor{N_\R}:=N\otimes_\Z\R$) 
gives a function on $\calA$, where $a\mapsto w\cdot  a$.
Write \defcolor{$h_\calA(w)$} for the maximum value this function takes on points of $\calA$.
The function $w\mapsto h_\calA(w)$ is the \demph{support function} of $\calA$.
The subset of $\calA$ where the function $a\mapsto w\cdot  a$ attains its maximum, 
 \begin{equation}\label{Eq:calA_w}
   \defcolor{\calA_w}\ :=\ \{ a\in\calA\mid w\cdot  a = h_\calA(w)\}\,,
 \end{equation}
is the face of $\calA$ \demph{exposed} by $w$.
When $\calA$ is the set of column vectors of $\MySmTVec{0&1&0&0&1&0&0&1}{0&0&1&0&1&0&1&1}{0&0&0&1&0&0&1&1}$, which are the
vertices of the lattice cube, the face exposed by the vector $(1,2,3)$ is the vertex  
$\MySmTVec{1}{1}{1}$, the face exposed by the vector $(1,-2,0)$ is the subset 
$\{\MySmTVec{1}{0}{1},\MySmTVec{1}{0}{0}\}$ that spans an edge of the cube, and the face exposed by the
vector $(0,0,-1)$ is the subset
$\{\MySmTVec{0}{0}{0},\MySmTVec{0}{1}{0},\MySmTVec{1}{0}{0},\MySmTVec{1}{1}{0}\}$, which spans the
downward-pointing facet.
\[
  \begin{picture}(111,99)(-17,-15)
    \put(0,0){\includegraphics[height=70pt]{figures/Cube.eps}}
    \put( 77,  3){$\MySmTVec{1}{0}{0}$}    \put( 77, 58){$\MySmTVec{1}{0}{1}$}
    \put( 53, 19){$\MySmTVec{1}{1}{0}$}    \put( 50, 75){$\MySmTVec{1}{1}{1}$}
    \put(-17,  8){$\MySmTVec{0}{1}{0}$}    \put(-17, 62){$\MySmTVec{0}{1}{1}$}
    \put( 16,-12){$\MySmTVec{0}{0}{0}$}    \put( 25, 44){$\MySmTVec{0}{0}{1}$}
  \end{picture}
\]

A \demph{face} of $\calA$  is any subset of this form.
These same notions of support function $h_P(w)$, face $F$ of $P$, and face $P_w$ of $P$ exposed by $w$, also hold for
any polytope $P$.
Each face $\calF$ of $\calA$ is the intersection of $\calA$ with a face $F$ of its convex hull $\conv(\calA)$, and 
$F=\conv(\calF)$.
The inclusion $\calF\subset\calA$ of subsets induces an inclusion of projective spaces
$\P^\calF\subset \P^\calA$, where $\P^\calF$ is identified with the coordinate
subspace $\{z\in\P^\calA\mid z_a=0 \mbox{ if } a\not\in\calF\}$ of $\P^\calA$. 
We state a relation between faces of $\calA$ and toric subvarieties of $X_\calA$ without proof.

\begin{lemma}\label{L:BoundaryToricVars}
 Let $X_\calA\subset\P^\calA$ be the projective toric variety given by finite set $\calA\subset\Z^{1+n}$ lying on an
 affine hyperplane.
 For any point $z\in X_\calA$, its support $\supp(z) = \{ a\in \calA \mid z_a\neq 0\}$  is a face of $\calA$.
 For every face $\calF$ of $\calA$, the intersection $X_\calA\cap\P^\calF$ is naturally identified with $X_\calF$.
\end{lemma}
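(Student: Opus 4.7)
The plan is to prove the two statements separately; the second is essentially algebraic, while the first requires a convex-geometry input.

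For (2), I would prove $X_\calA \cap \P^\calF = X_\calF$ by establishing both inclusions. The inclusion $X_\calA \cap \P^\calF \subset X_\calF$ is straightforward: any binomial generator $z^u - z^v$ of $I_\calF$ has $u,v \in \N^\calF \subset \N^\calA$ with $\calF u = \calF v$, hence $\calA u = \calA v$, so $z^u - z^v$ also lies in $I_\calA$ and any point of $X_\calA$ with coordinates vanishing outside $\calF$ automatically kills every generator of $I_\calF$. For the reverse inclusion I would use a one-parameter subgroup limit: choose $w \in N$ with $\calA_w = \calF$ and set $m := h_\calA(w)$. For $x \in (\C^*)^n$, the projective point $\varphi_\calA(t^{-w}\cdot x) = [t^{-w\cdot a}x^a : a\in\calA]$ agrees with $[t^{m-w\cdot a}x^a : a\in\calA]$, whose exponents $m-w\cdot a$ are all nonnegative and vanish exactly on $\calF$. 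Letting $t \to 0$ along $\R_{>0}$ sends the coordinate at $a\in\calF$ to $x^a$ and those at $a\notin\calF$ to $0$, so the limit is $\varphi_\calF(x)$ viewed inside $\P^\calF\subset\P^\calA$; closedness of $X_\calA$ then gives $\varphi_\calF(x)\in X_\calA\cap\P^\calF$, and taking Zariski closure yields $X_\calF \subset X_\calA\cap\P^\calF$.

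For (1), I would argue by contradiction: suppose $z \in X_\calA$ has support $\calF_0 := \supp(z)$ that is not a face, and let $\mathcal{G}$ be the intersection of all faces of $\calA$ containing $\calF_0$. Then $\mathcal{G}$ is itself a face (intersections of $\calA_w$'s are again of that form, and there are only finitely many), and $\calF_0 \subsetneq \mathcal{G}$ by hypothesis. Since $\mathcal{G}$ is the minimal face containing $\calF_0$, $\conv(\calF_0)$ lies on no proper face of $\conv(\mathcal{G})$, and a standard convex-geometry fact produces a point
\[
   p \;\in\; \conv(\calF_0)\,\cap\,\mathrm{relint}\bigl(\conv(\mathcal{G})\bigr),
\]
giving two convex expressions
\[
   p \;=\; \sum_{a\in\calF_0}\lambda_a a \;=\; \sum_{a\in\mathcal{G}}\mu_a a,
\]
with $\lambda_a\geq 0$, $\mu_a>0$ for every $a\in\mathcal{G}$, and $\sum\lambda_a=\sum\mu_a=1$ (the strict positivity is the characterization of relative-interior points). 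Clearing denominators yields $u,v\in\N^\calA$ with $\calA u = \calA v$, $\supp(u)\subset\calF_0$, and $\supp(v)=\mathcal{G}\supsetneq\calF_0$, so by Theorem~\ref{Th:Lin_Span_Toric} the binomial $z^u - z^v$ lies in $I_\calA$. Evaluating at $z$, the monomial $z^u$ is nonzero since every $z_a$ with $a\in\calF_0$ is nonzero, while $z^v$ vanishes because some $z_a$ with $a\in\mathcal{G}\smallsetminus\calF_0$ is zero; this contradicts $z\in X_\calA$.

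The main obstacle is the convex-geometry step that $\conv(\calF_0)$ meets $\mathrm{relint}(\conv(\mathcal{G}))$ whenever $\mathcal{G}$ is the minimal face containing $\calF_0$. This reduces to the standard lemma that any convex subset of the boundary of a polytope is contained in a single proper face, which I would either cite or prove by picking a relative-interior point of the convex subset and tracking the unique face whose relative interior contains it. Once that fact is accepted, the rest is routine manipulation of binomials via Theorem~\ref{Th:Lin_Span_Toric} together with the closedness of $X_\calA$.
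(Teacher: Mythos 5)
The paper does not prove this lemma: it is stated ``without proof'' and relegated to Exercise~13, so there is no author argument to compare against. Your proof is correct, and both halves follow what I would regard as the standard route. For part~(2), extending binomials of $I_\calF$ by zero gives $X_\calA\cap\P^\calF\subseteq X_\calF$, and for the reverse inclusion the one-parameter degeneration is sound: since $m-w\cdot a\ge 0$ with equality exactly on $\calF$, the curve $t\mapsto[t^{m-w\cdot a}x^a:a\in\calA]$ extends over $t=0$ to a morphism $\C\to\P^\calA$ landing in the Zariski-closed set $X_\calA$, so the $t=0$ fiber $\varphi_\calF(x)$ lies in $X_\calA\cap\P^\calF$; taking closures gives $X_\calF\subseteq X_\calA\cap\P^\calF$. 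For part~(1), passing to the minimal face $\mathcal G$ containing $\calF_0=\supp(z)$ and producing a point of $\conv(\calF_0)\cap\mathrm{relint}(\conv(\mathcal G))$ yields the contradicting binomial exactly as you describe, and the convex-geometry input you isolate---a convex subset of a polytope contained in no proper face must meet the relative interior---is the right tool, with your sketch (track the unique face whose relative interior contains a relative-interior point of the convex subset, and use extremality) being a correct proof of it.

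Two small points should be tightened. First, ``clearing denominators'' presupposes that $p$ admits \emph{rational} convex expressions on both sides. This is easily arranged: take $p$ to be the barycenter of $\calF_0$ (a rational point in $\mathrm{relint}(\conv(\calF_0))$, which by your argument lies in $\mathrm{relint}(\conv(\mathcal G))$), and obtain a positive rational representation over $\mathcal G$ by rational Carath\'eodory applied to $p+\epsilon(p-q)$ for $q$ the barycenter of $\mathcal G$ and $\epsilon\in\Q_{>0}$ small, then averaging back with $q$. Second, since $\calA\subset\Z^{1+n}$, the parameterizing torus is $(\C^*)^{1+n}$ rather than $(\C^*)^n$, and one should observe that an exposing vector $w$ with $\calA_w=\calF$ can be taken in $N$ (integral), which holds because the set of such $w$ is the relative interior of a rational polyhedral cone and hence contains lattice vectors. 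Neither of these affects the substance of the argument.
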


There is much more relating the structure of the polytope $\conv(\calA)$ and the toric variety.
We state another such result without proof.
Consider the map $\defcolor{\mu_\calA}\colon \P^\calA\to \conv(\calA)$ given by
\[
   \P^\calA\ \ni\ z=[z_a\mid a\in\calA]\ \longmapsto\ 
    \frac{\sum_{a\in\calA} a|z_a| }{\sum_{a\in\calA} |z_a|}\ \in\ \conv(\calA)\,.
\]

\begin{lemma}\label{L:Birch}
 The map $\mu_\calA\colon X_{\calA^+}\to\conv(\calA)$ is surjective.
 The inverse image of a face $F$ of $\conv(\calA)$ is $X_\calF$, where $\calF=F\cap\calA$.
 The map $\mu_\calA$ remains surjective when restricted to $X_{\calA^+}(\R)=X_{\calA^+}\cap\P^\calA(\R)$, where
 $\P^\calA(\R)=\P(\R^\calA)$, and also to
\[
   X_{\calA^+}(\R_\geq)\ :=\ 
  \{x=[x_a\mid a\in\calA]\in X_{\calA^+} \mid x_a\geq 0\mbox{ for all }a\in\calA\}\,,
\]
 on which it is a homeomorphism.
 This map identifies $X_{\calA^+}(\R)$ with $2^n$ copies of $\conv(\calA)$ glued along facets. 
\end{lemma}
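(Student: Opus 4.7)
The plan is to prove the lemma in three stages: first establish surjectivity and bijectivity on the dense positive torus orbit via a Birch-type strict-convexity argument; second, extend to the boundary using the face correspondence of Lemma~\ref{L:BoundaryToricVars}; third, describe the full real locus $X_{\calA^+}(\R)$ by sign-decomposing the real torus.

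First I note that $\mu_\calA$ indeed takes values in $\conv(\calA)$: because $\calA$ lies on an affine hyperplane with $w\cdot a$ a nonzero constant for $a\in\calA$, every point of $X_{\calA^+}(\R_\geq)$ has a unique representative with $\sum_{a\in\calA}z_a=1$, and for such a representative $\mu_\calA(z)=\sum_a a\,z_a$ is literally a convex combination of points of $\calA$. The core analytic step is the Birch-type assertion for the positive torus orbit: after the substitution $x=e^y$ (coordinatewise), the composition $\mu_\calA\circ\varphi_{\calA^+}$ restricted to $\R^n_>$ coincides with $\nabla f$, where
\[
   f(y)\ :=\ \log\sum_{a\in\calA}e^{y\cdot a}\,.
\]
The Hessian of $f$ at $y$ is the covariance matrix of the probability distribution on $\calA$ proportional to $e^{y\cdot a}$; it is positive semi-definite, and positive definite modulo the annihilator of the affine span of $\calA$, which on the Lie-algebra level is exactly the tangent space of $\ker\varphi_{\calA^+}$ restricted to the positive real torus. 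Together with coercivity of $f$ in non-kernel directions (as $\|y\|\to\infty$, $\nabla f$ accumulates on vertices of $\conv(\calA)$), strict convexity implies that $\nabla f$ maps $\R^n$ modulo the kernel bijectively onto the relative interior of $\conv(\calA)$. Hence $\mu_\calA$ restricts to a homeomorphism from the positive torus orbit in $X_{\calA^+}(\R_\geq)$ onto the relative interior of $\conv(\calA)$.

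To extend to the boundary, I apply Lemma~\ref{L:BoundaryToricVars}: the support of any $z\in X_{\calA^+}(\R_\geq)$ is a face $\calF$ of $\calA$, and the points with support exactly $\calF$ form the positive torus orbit of $X_\calF(\R_\geq)$. The restriction of $\mu_\calA$ to this stratum is the analogous moment map for $\calF$, landing in the relative interior of $\conv(\calF)=F$. Applying the previous paragraph face by face produces a continuous bijection $X_{\calA^+}(\R_\geq)\to\conv(\calA)$; compactness of $X_{\calA^+}(\R_\geq)$ (closed in the compact space $\P^\calA(\R_\geq)$) together with Hausdorffness of $\conv(\calA)$ then upgrades this to a homeomorphism. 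Surjectivity of $\mu_\calA$ on $X_{\calA^+}$ and on $X_{\calA^+}(\R)$ follows a fortiori via the composition $X_{\calA^+}(\C)\twoheadrightarrow X_{\calA^+,\geq}$ of~(1.7).

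For the final statement, the real torus factors as $(\R^*)^n=\{\pm1\}^n\times\R^n_>$, and the leading $\R^*$-scalar from the lift is absorbed by projectivization, so the sign group $\{\pm1\}^n$ acts on $X_{\calA^+}(\R)$ and yields
\[
   X_{\calA^+}(\R)\ =\ \bigcup_{\epsilon\in\{\pm1\}^n}\epsilon\cdot X_{\calA^+}(\R_\geq)\,.
\]
Each translate is a homeomorphic copy of $\conv(\calA)$ via $\mu_\calA$, and two translates $\epsilon,\epsilon'$ can agree only at points whose coordinates $z_a$ vanish for every $a$ with $\epsilon^a\neq(\epsilon')^a$; by Lemma~\ref{L:BoundaryToricVars} this vanishing locus is the toric subvariety $X_\calF$ for the face $\calF=\{a\in\calA:\epsilon^a=(\epsilon')^a\}$, which for pairs $\epsilon,\epsilon'$ differing minimally is a facet, yielding the claimed gluing. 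The main obstacle throughout this plan is the Birch step: everything else is bookkeeping through the face correspondence, but strict convexity and coercivity of the log-sum-exp potential is the genuinely analytic input.
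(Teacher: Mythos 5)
The paper states this lemma without proof (it is prefaced by ``We state another such result without proof''), so there is no paper argument to compare against; I will evaluate your proposal on its own terms. Your Birch-type plan for surjectivity and the homeomorphism claim is the standard route and the first two paragraphs are sound: identifying $\mu_\calA\circ\varphi_\calA$ on $\R^n_>$ with $\nabla\log\sum_{a}e^{y\cdot a}$, positive definiteness of the covariance Hessian modulo the annihilator of $\{a-b\}$, the accumulation of $\nabla f$ on the boundary, extending stratum-by-stratum through Lemma~\ref{L:BoundaryToricVars}, and the continuous-bijection-from-a-compactum argument are all correct. (Surjectivity on $X_{\calA^+}$ and on $X_{\calA^+}(\R)$ also follows simply from the inclusion $X_{\calA^+}(\R_\geq)\subset X_{\calA^+}(\R)\subset X_{\calA^+}$ together with the fact that $\mu_\calA(z)$ depends only on $|z|$; you do not really need the projective analogue of~\eqref{Eq:AffineFunctoriality}.)

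The final paragraph, however, has a concrete gap. In analyzing $\epsilon X_{\calA^+}(\R_\geq)\cap\epsilon' X_{\calA^+}(\R_\geq)$ you only treat the case in which the projective identification between representatives uses a positive scalar; a negative scalar gives the opposite constraint, namely $z_a=0$ whenever $\epsilon^a=(\epsilon')^a$. So the intersection is the set of points whose support lies in one of the two complementary parity classes $\{a:\epsilon^a=(\epsilon')^a\}$ or $\{a:\epsilon^a\neq(\epsilon')^a\}$, not just the former. Moreover, neither parity class is in general a face of $\calA$ (it is the vanishing set of a $\Z/2$-linear form), so Lemma~\ref{L:BoundaryToricVars} does not identify the vanishing locus with $X_\calF$ for $\calF=\{a:\epsilon^a=(\epsilon')^a\}$; the correct statement is that the intersection is the union of $X_{\calF'}$ over faces $\calF'$ of $\calA$ contained in one of the two parity classes. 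Your heuristic that ``minimally differing'' sign vectors produce facet gluings is also false: for the diamond of Subsection~\ref{SS:DoublePillow}, $\epsilon=(1,1)$ and $\epsilon'=(-1,1)$ differ in a single coordinate yet their translates meet only at the four vertices, whereas $\epsilon=(1,1)$ and $\epsilon'=(-1,-1)$ differ in two coordinates and glue along all four edges (which is exactly what makes the top and bottom of the central pillow). Pinning down the gluing pattern requires working out, pair by pair, which faces of $\calA$ lie entirely in a single parity class; this combinatorial step is missing from your sketch.
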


\subsection{Kushnirenko's Theorem}\label{S:Kushnirenko}
We turn to one of the most celebrated applications of toric varieties, understanding
the number of solutions to a system of polynomial equations.
A Laurent polynomial $f\in\C[x^\pm]$ is a finite linear combination of monomials.
That is, there are coefficients $c_a\in\C$ for $a\in \Z^n$ such that 
\[
   f\ =\ \sum_a c_a x^a\,,
\]
with at most finitely many coefficients $c_a$ nonzero.
The set $\calA$ of indices of nonzero coefficients is called the \demph{support} of $f$ and its convex
hull $\conv(\calA)$ is the \demph{Newton polytope} of $f$, which is a lattice polytope.
We consider the number of solutions in $(\C^*)^n$ to a system
 \begin{equation}
  \label{E3:PolySystem} 
    f_1(x)\ =\ f_2(x)\ =\ \dotsb\ =\ f_n(x)\ =\ 0
 \end{equation}
of (Laurent) polynomial equations, where each polynomial has the same support $\calA$.
The coefficients of a polynomial identify $\C^\calA$ with the set of polynomials whose support is a
subset of $\calA$, and $(\C^\calA)^n$ is identified with set of polynomial systems~\eqref{E3:PolySystem} with support
$\calA$. 
Kushnirenko~\cite{Kushnirenko} proved the following count for the number of solutions to a system of
polynomial equations~\eqref{E3:PolySystem} with support $\calA$.

\begin{theorem}[Kushnirenko]
\label{Th:Kushnirenko}
 A system~\eqref{E3:PolySystem} of $n$ polynomials in $n$ variables 
 with support $\calA$ has at most $n!\Vol(\conv(\calA))$ isolated
 solutions in $(\C^*)^n$, counted with multiplicity.
 There is a dense open subset of  $(\C^\calA)^n$ consisting of systems with support $\calA$ having exactly
 $n!\Vol(\conv(\calA))$ solutions in $(\C^*)^n$, each isolated and occurring with multiplicity one. 
\end{theorem}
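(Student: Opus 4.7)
The plan is to interpret the system as a hyperplane section of the projective toric variety $X_{\calA^+}\subset\P^\calA$ and then apply Bezout's theorem. Each polynomial $f_i=\sum_{a\in\calA}c_{i,a}x^a$ determines a linear form $L_i(z):=\sum_{a\in\calA}c_{i,a}z_a$ on $\C^\calA$, hence a hyperplane $H_i\subset\P^\calA$. Pulling back along $\varphi_{\calA^+}(t,x)=[tx^a\mid a\in\calA]$ from~\eqref{Eq:liftMap} yields $\varphi_{\calA^+}^*L_i=t\,f_i(x)$, so zeros of the system in $(\C^*)^n$ correspond bijectively, via the induced map $\bar\varphi_{\calA^+}\colon(\C^*)^n\to X_{\calA^+}$, $x\mapsto[x^a\mid a\in\calA]$, to points of $X_{\calA^+}\cap H_1\cap\cdots\cap H_n$ lying in the dense torus orbit. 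First I would dispose of the degenerate case: if $\calA-\calA$ fails to span $\R^n$, then the system is invariant under a nontrivial one-parameter subgroup of $(\C^*)^n$ (the monomials $x^a$ all transform by a common scalar), so there are no isolated solutions; simultaneously $\Vol(\conv(\calA))=0$, so the bound is trivial. Thus I may assume $\calA-\calA$ spans $\R^n$, making $X_{\calA^+}$ an $n$-dimensional subvariety of $\P^\calA$.

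By Bezout's theorem in $\P^\calA$, the number of isolated points of $X_{\calA^+}\cap H_1\cap\cdots\cap H_n$, counted with multiplicity, is at most $\deg(X_{\calA^+})$, and equals it when the intersection is zero-dimensional. Any solution of the original system in $(\C^*)^n$ contributes one such isolated point (with the same local multiplicity, since $\bar\varphi_{\calA^+}$ is \'etale near the torus orbit), so the upper bound reduces to the degree identity
$$\deg(X_{\calA^+})\ =\ n!\,\Vol(\conv(\calA)).$$

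Establishing this identity is the main obstacle and is where the combinatorics enters. I would compute the Hilbert polynomial of the homogeneous coordinate ring of $X_{\calA^+}$: by Corollary~\ref{Co:CoordinateRing} this ring is $\C[\N\calA^+]$, whose degree-$d$ graded piece has a basis indexed by elements of $\N\calA^+$ of height $d$, equivalently by nonnegative integer combinations $\sum_{a\in\calA} n_a\,a$ with $\sum_a n_a=d$. For $d\gg0$ this count is a polynomial in $d$; by Ehrhart's theorem the leading term of this counting function is $\Vol(\conv(\calA))\,d^n$, where the volume is normalized with respect to the appropriate sublattice of $\Z^n$ (one must track the index $[\Z^n:\Z(\calA-\calA)]$ and the fact that $\N\calA$ need not contain every lattice point of its convex hull, both of which cancel correctly in the leading coefficient). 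Multiplying by $n!$ gives the degree, and hence the desired bound.

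For the second (generic) assertion, I would use Bertini-style parameter arguments. By Lemma~\ref{L:BoundaryToricVars}, the boundary $X_{\calA^+}\smallsetminus\bar\varphi_{\calA^+}((\C^*)^n)$ is the union of the toric subvarieties $X_{\calF^+}$ for proper faces $\calF$ of $\calA$, each of dimension strictly less than $n$. A dimension count shows that, for $(f_1,\dots,f_n)$ in a Zariski-dense open subset of $(\C^\calA)^n$, no $H_i$ passes through a generic point of any such $X_{\calF^+}$, so all intersection points lie in the dense orbit; a second Bertini argument shows the intersection is transverse there. Combined with Bezout this yields exactly $n!\,\Vol(\conv(\calA))$ isolated simple solutions in $(\C^*)^n$, completing the proof.
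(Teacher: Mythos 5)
Your overall strategy is the same as the paper's: interpret the system as a linear section of the projective toric variety $X_{\calA^+}$, reduce the bound to the degree identity $\deg(X_{\calA^+})=n!\,\Vol(\conv(\calA))$, and compute the degree from the leading coefficient of the Hilbert polynomial via Ehrhart theory. However, there is a genuine gap at the crux of the degree computation. You correctly identify that $\HF_{X_{\calA^+}}(d)=|d\calA|$, the number of $d$-fold sums of elements of $\calA$, and you then invoke Ehrhart's theorem to conclude the leading term is $\Vol(\conv(\calA))\,d^n$. But Ehrhart's theorem counts $|d\conv(\calA)\cap\Z^n|$, not $|d\calA|$, and $\N\calA^+$ is generally a proper, non-saturated subsemigroup of $S_\calA=\R_\geq\calA^+\cap\Z^{1+n}$. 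You acknowledge this issue parenthetically and assert that the defect ``cancels correctly in the leading coefficient,'' but this is precisely the statement that needs proof, and it is the technical heart of the argument. The paper handles it by exhibiting an explicit vector $v\in\N\calA^+$ with $v+S_\calA\subset\N\calA^+\subset S_\calA$, which yields the two-sided bound $E_{\conv(\calA)}(d-\nu|\calA|)\leq \HF_{X_{\calA^+}}(d)\leq E_{\conv(\calA)}(d)$ and forces the two polynomials to share degree and leading coefficient. Constructing such a $v$ (from a finite set of fractional representatives $\calB$ and a uniform lower bound $\nu$ on the integer coefficients in chosen integral expressions) is the missing step. Without it the claim that $|d\calA|$ has the same leading term as the Ehrhart polynomial is unsupported.

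A secondary issue: you assert that solutions in $(\C^*)^n$ correspond \emph{bijectively} to points of $X_{\calA^+}\cap H_1\cap\dots\cap H_n$ in the dense orbit. This is only true when $\Aff_\Z\calA=\Z^n$ (Exercise~\ref{Ex:Prinjective}); if $\Aff_\Z\calA$ spans $\R^n$ but is a proper sublattice of index $k$, the map $\varphi_\calA$ is $k$-to-one, and one must separately verify that the factor of $k$ is absorbed by the change in lattice normalization of the volume. The paper proves the theorem under the hypothesis $\Aff_\Z\calA=\Z^n$ and explicitly defers the general case to a reference, which is cleaner than folding it into a parenthetical remark. Your treatment of the degenerate case ($\calA-\calA$ not spanning $\R^n$, hence no isolated solutions and zero volume) and your outline of the genericity argument via Bertini and Lemma~\ref{L:BoundaryToricVars} are both sound and in the spirit of the paper's Lemma~\ref{L:genericSystems}.
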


Lemma~\ref{L:genericSystems} of Section~\ref{S:Bernstein} establishes the claim that there is an open set in $(\C^\calA)^n$
of systems with support $\calA$ all of which have the same number of isolated solutions and where each solution occurs with 
multiplicity one. 
Theorem~\ref{Th:SparseDiscriminant} describes the discriminant conditions that imply all solutions are isolated.
Namely, that for each $w\in\R^n$, the \demph{facial system}
 \begin{equation}\label{Eq:facial-system}
    f_{1,w}(x)\ =\ f_{2,w}(x)\ =\ \dotsb\ =\ f_{n,w}(x)\ =\ 0
 \end{equation}
 has no solutions in $(\C^\times)^n$, where, for a Laurent polynomial $f$ with support $\calA$, $f_w$ is the restriction
 of $f$ to the monomials in $\calA_w$.
 This is also the initial form $\ini_w f$ of $f$ with respect to the weighted partial term order $\prec_w$.
 This partial term order is defined in Exercise~\ref{Ex:weightedTermOrder}, where you are asked to prove the previous claim.

We use the projective toric variety $X_{\calA^+}$ to prove Kushnirenko's Theorem.
The map $\varphi_\calA$ parameterizes $X_{\calA^+}$, and we 
first understand when this parametrization is injective.
The \demph{affine span} of a set $\calA$ is
 \begin{equation}\label{Eq:Affine-span}
   \defcolor{\Aff \calA}\ :=\      \Bigl\{ \sum_{a\in\calA} a\lambda_a \mid  \sum_{a\in\calA} \lambda_a=1 \Bigr\}\,.
 \end{equation}
This differs from the convex hull in that the coefficients $\lambda_a$ may be negative.
When $\lambda_a\in\Z$, this is the \demph{integral affine span} \defcolor{$\Aff_\Z\calA$}.
For any $a\in\calA$, the affine span is the coset
 \begin{equation}\label{Eq:Affine-coset}
   \Aff \calA\ =\ a \ +\ \R\{ b-a\mid b\in\calA\}\,,
 \end{equation}
and the same (replacing $\Z$ for $\R$) gives the integral affine span.

The map $\varphi_\calA\colon(\C^*)^n\to\P^\calA$ is the restriction of $\varphi_{\calA^+}$~\eqref{Eq:liftMap} to the subtorus 
$\{1\}\times(\C^*)^n$ of the torus $\C^*\times(\C^*)^n$ where $t=1$.
In Exercise~\ref{Ex:Prinjective} you are asked to show that $\varphi_\calA$ is injective if and only if 
$\Aff_\Z \calA=\Z^n$.
Notice that if $0\in\calA$, then $\Aff_\Z \calA=\Z\calA$.
Since multiplying a Laurent polynomial by a monomial $x^a$ does not change its set of zeros, it is no loss of generality
to assume that $0\in\calA$, in which case $\varphi_\calA$ is injective if and only if $\Z\calA=\Z^n$.
As $0\in\calA$, one of the coordinates of $\varphi_\calA$ is 1, so its image lies in a standard affine
patch of $\P^\calA$.

We relate the projective toric variety $X_{\calA^+}$ to systems of polynomials with support $\calA$.
Given a (homogeneous) linear form $\Lambda$ on $\P^\calA$, 
\[
  \defcolor{\Lambda}\ =\ \sum_{a\in\calA} c_a z_a\,,
\]
its pullback $\varphi^*_{\calA}(\Lambda)$ 
along $\varphi_{\calA}$ is a polynomial with support $\calA$,
\[
   \varphi^*_{\calA}(\Lambda)\ =\ 
    \sum_{a\in\calA} c_a x^a\,.
\]
Consequently, a system of $n$ polynomials~\eqref{E3:PolySystem} with support $\calA$ is the pullback along $\varphi_\calA$
of a system of $n$ linear forms on $\P^\calA$. 
Note that a linear form $\Lambda$ on $\P^\calA$ defines a hyperplane $H\subset\P^\calA$ and $n$ general linear forms
define a linear subspace $L$ of codimension $n$.

\begin{lemma}\label{L3:linearSection}
 The solution set of a system of polynomials~\eqref{E3:PolySystem} with support $\calA$
 is the pullback $\varphi_{\calA}^{-1}(L)=\varphi_{\calA}^{-1}(L\cap\varphi_\calA({\C^*}^n))$
 of a linear section of $\varphi_{\calA}({\C^*}^n)$, where $L$ has codimension equal to the
 dimension of the linear span of the polynomials $f_i$.
\end{lemma}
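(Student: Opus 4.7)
The plan is to make precise the correspondence between polynomials with support $\calA$ and linear forms on $\P^\calA$ that was set up just before the lemma, and then translate the simultaneous vanishing of the $f_i$ into a linear intersection via the parametrization $\varphi_\calA$.

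First I would invoke the pullback identification $\varphi^*_\calA\colon \C^\calA \to \{\text{Laurent polynomials with support in }\calA\}$ sending $\Lambda = \sum_{a\in\calA} c_a z_a$ to $\sum_{a\in\calA} c_a x^a$. Since both source and target have basis indexed by $\calA$ with matching coefficients, this map is a $\C$-linear isomorphism. In particular, for each $i=1,\dotsc,n$ there is a unique linear form $\Lambda_i$ on $\P^\calA$ with $f_i = \varphi^*_\calA(\Lambda_i)$, and the $\C$-linear span of $\{\Lambda_1,\dotsc,\Lambda_n\}$ has the same dimension $k$ as the linear span of $\{f_1,\dotsc,f_n\}$.

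Next I would unwind the definitions. Let $H_i := \calV(\Lambda_i)\subset \P^\calA$ and $L := H_1\cap\dotsb\cap H_n$. For $x\in(\C^*)^n$, the point $\varphi_\calA(x)\in\P^\calA$ satisfies $\Lambda_i(\varphi_\calA(x)) = f_i(x)$ by the definition of pullback, so $x$ solves the system~\eqref{E3:PolySystem} if and only if $\varphi_\calA(x)\in L$, i.e.\ the solution set equals $\varphi_\calA^{-1}(L)$. Since $\varphi_\calA^{-1}(L)$ only depends on the intersection of $L$ with the image of $\varphi_\calA$, this equals $\varphi_\calA^{-1}(L\cap \varphi_\calA((\C^*)^n))$, giving the asserted equality.

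Finally, for the codimension statement, I would argue that $L$ is cut out by the linear system spanned by $\Lambda_1,\dotsc,\Lambda_n$; choosing a basis of $k$ linearly independent forms among them yields $k$ hyperplanes whose intersection is $L$. These $k$ forms are linearly independent as elements of the dual of $\C^\calA$, so $L$ has codimension exactly $k = \dim\operatorname{span}_\C(f_1,\dotsc,f_n)$ in $\P^\calA$. There is no real obstacle here; the only point requiring mild care is noting that linear dependences among the $f_i$ correspond bijectively to linear dependences among the $\Lambda_i$, which is immediate from the isomorphism $\varphi^*_\calA$ above.
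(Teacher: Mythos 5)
Your proof is correct and follows essentially the same route as the paper, which establishes the lemma via the discussion immediately preceding it: the pullback $\varphi^*_\calA$ identifies linear forms on $\P^\calA$ with polynomials supported in $\calA$, so the zero set of the system is exactly the preimage under $\varphi_\calA$ of the corresponding intersection of hyperplanes. Your added observation that linear dependences among the $f_i$ correspond to linear dependences among the $\Lambda_i$ (making the codimension of $L$ exactly the dimension of the span of the $f_i$) is the right way to make the paper's ``general linear forms define a linear subspace $L$ of codimension $n$'' remark precise in the non-generic case.
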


\begin{example}\label{Ex3:cubic}
 Consider the polynomial system
 \begin{equation}\label{Eq3:ex_sparse}
   \Blue{f}\ :=\ \Blue{x^2y+2xy^2-1+xy}\ =\ 0
     \quad\mbox{and}\quad 
   \Magenta{g}\ :=\ \Magenta{x^2y-xy^2+2-xy}\ =\ 0\,.
 \end{equation}
 These polynomials define two plane curves which have one real point of intersection
 at  $(1.53277,-0.90655)$ and are displayed in Figure~\ref{F:sparse_curves}.
 \begin{figure}[htb]
   \begin{picture}(240,132) 
    \put(0,0){\includegraphics[height=132pt]{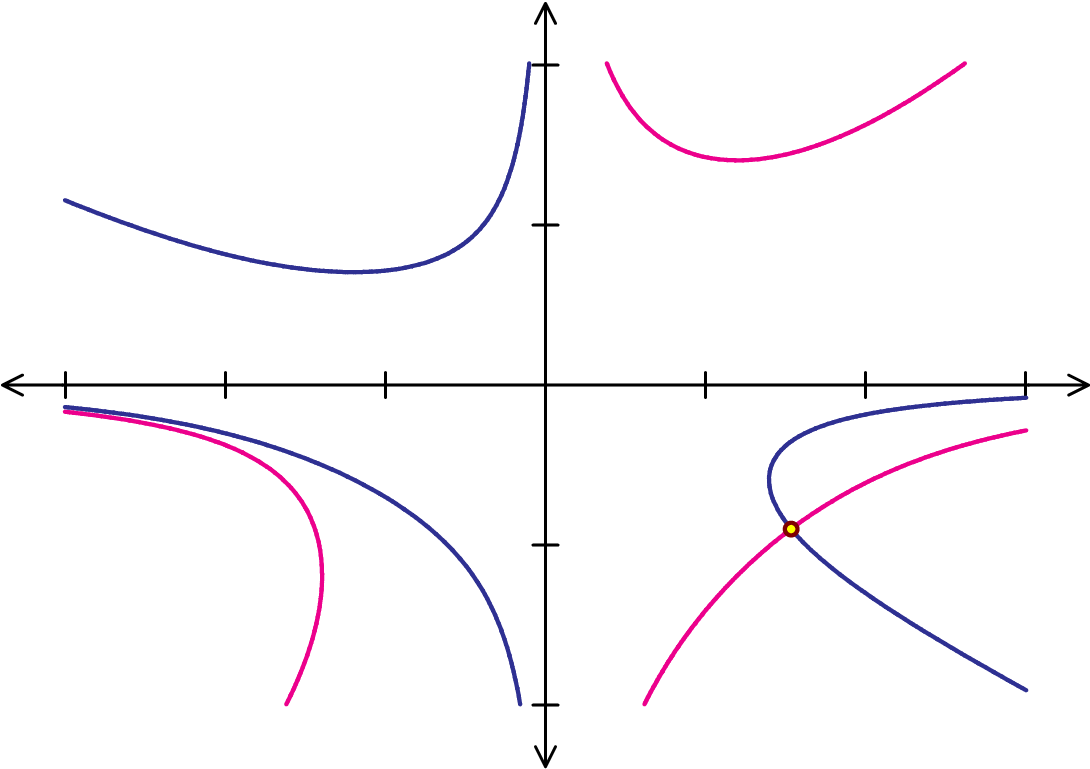}}
    \put(54,54){$-1$}  \put(118,54){$1$} 
    \put(97,35.5){$-1$}  \put(97,90){$1$} 
    \put(13,101){\Blue{$f$}}    \put(79,12){\Blue{$f$}}     \put(178,12){\Blue{$f$}}
    \put(42,12){\Magenta{$g$}} \put(116,12){\Magenta{$g$}} \put(160,112){\Magenta{$g$}} 
    \put(168,41.1){\vector(-1,0){29}} \put(170,38){$(1.53,-0.907)$}
    \put(97,126){$y$}  \put(180,70){$x$}
   \end{picture}
 \caption{Curves of polynomial system~\eqref{Eq3:ex_sparse}.}
 \label{F:sparse_curves}
 \end{figure}
 The exponent vectors $\calA$ are the columns of the matrix 
 $(\begin{smallmatrix}2&1&0&1\\1&2&0&1\end{smallmatrix})$.
 The map $\varphi_\calA$ is 
\[
  (x,y)\ \longmapsto\ [x^2y\,:\ xy^2\,:\ 1\,:\ xy]\ \in\P^\calA\simeq \P^3\,.
\]
 Its image consists of those points $[z_{\MySmVec{2}{1}}:z_{\MySmVec{1}{2}}:z_{\MySmVec{0}{0}}:z_{\MySmVec{1}{1}}]$ with 
 $z_{\MySmVec{2}{1}}z_{\MySmVec{1}{2}}z_{\MySmVec{0}{0}}=z_{\MySmVec{1}{1}}^3\neq 0$, which is part of a cubic surface.
 The polynomial system~\eqref{Eq3:ex_sparse} corresponds to the two linear forms
\[
  z_{\MySmVec{2}{1}}+2z_{\MySmVec{1}{2}}-z_{\MySmVec{0}{0}}+z_{\MySmVec{1}{1}}\ =\ 
  z_{\MySmVec{2}{1}}-z_{\MySmVec{1}{2}}+2z_{\MySmVec{0}{0}}-z_{\MySmVec{1}{1}}\ =\ 0\,.
\]
 These define a line \defcolor{$\ell$} in $\P^3$. 
 Figure~\ref{F:cubic_surface} shows $\ell$ and (part of) the cubic surface.
 This is in the affine part of $\P^\calA$ where $z_{\MySmVec{1}{1}}\neq 0$ near the origin.
 The best view is from the $+-+$-orthant.
 \begin{figure}[htb]
   \begin{picture}(213,135)(-63,0)
    \put(0,0){\includegraphics[width=5cm]{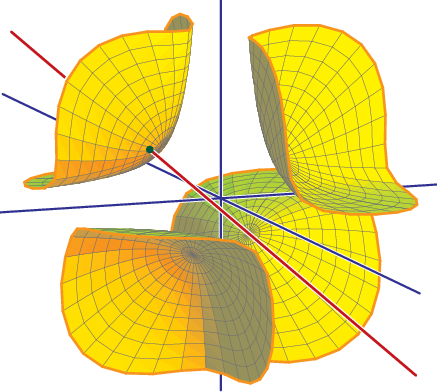}}
    \put(-3,115){$\ell$}
    \put(138,28){$z_{\MySmVec{2}{1}}$}  \put(144,66){$z_{\MySmVec{1}{2}}$} \put(69,131){$z_{\MySmVec{0}{0}}$}
    \put(-64,76.3){solution}\put(-22,78.7){\vector(1,0){67}}
   \end{picture}
 \caption{Linear section of cubic surface.}
 \label{F:cubic_surface}
 \end{figure}
 From this, we see that there is one real solution to the system~\eqref{Eq3:ex_sparse}.
 \qed
  \end{example}

Lemma~\ref{L3:linearSection} gives an interpretation for the number \defcolor{$d(\calA)$} of
solutions to a general system~\eqref{E3:PolySystem} with support $\calA$.
The \demph{degree, $\deg(X)$}, of a subvariety $X$ of $\P^m$ of dimension $n$ is the number of 
points in a linear section $L\cap X$ of $X$, where \defcolor{$L$} is a general linear subspace in $\P^\calA$ of
codimension $n$.  
Since a general linear subspace of codimension $n$ meets the toric variety $X_{\calA^+}$ only at points in the image
$\varphi_\calA((\C^*)^n)$, and by Exercise~\ref{Ex:Prinjective}, $\varphi_\calA$ is injective if and only if
$\Aff_\Z\calA=\Z^n$,  we deduce the following.

\begin{lemma}
 When the affine span of $\calA$ is $\Z^n$, $d(\calA)=\deg(X_{\calA^+})$.
\end{lemma}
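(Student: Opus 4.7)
The plan is to bridge the algebraic count $d(\calA)$ and the geometric count $\deg(X_{\calA^+})$ through the pullback interpretation of Lemma~\ref{L3:linearSection}, then show that for generic $L$ the intersection $L\cap X_{\calA^+}$ is disjoint from the boundary of the embedded torus.

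First I would translate the problem via Lemma~\ref{L3:linearSection}: a general polynomial system with support $\calA$ corresponds to a general codimension-$n$ linear subspace $L\subset\P^\calA$, and its solution set in $(\C^*)^n$ is $\varphi_\calA^{-1}(L\cap\varphi_\calA((\C^*)^n))$. By the hypothesis $\Aff_\Z\calA=\Z^n$ together with Exercise~\ref{Ex:Prinjective}, the map $\varphi_\calA$ is injective, so this solution set is in bijection with the set of points $L\cap\varphi_\calA((\C^*)^n)$. Note also that $\Aff_\Z\calA=\Z^n$ is equivalent to $\Z\calA^+=\Z^{1+n}$ (choosing any $a\in\calA$ gives the coset description~\eqref{Eq:Affine-coset}), so $\dim X_{\calA^+}=n$.

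The crux of the argument is then to show that for generic $L$, none of the intersection points in $L\cap X_{\calA^+}$ lie on the boundary $X_{\calA^+}\smallsetminus\varphi_\calA((\C^*)^n)$. By Lemma~\ref{L:BoundaryToricVars}, every point $z\in X_{\calA^+}$ has support equal to some face of $\calA^+$, and the boundary is the finite union of toric subvarieties $X_{\calF^+}$ indexed by proper faces $\calF$ of $\calA$. Each such $X_{\calF^+}$ has dimension at most $n-1$, because a proper face of the polytope $\conv(\calA^+)$ spans a strictly smaller affine subspace than $\Aff\calA^+$. Since a general linear subspace of $\P^\calA$ of codimension $n$ misses any fixed subvariety of dimension strictly less than $n$ (standard dimension counting, which also ensures that $L\cap X_{\calA^+}$ is zero-dimensional and transverse), we conclude that for generic $L$,
\[
   L\cap X_{\calA^+}\ =\ L\cap \varphi_\calA((\C^*)^n)\,.
\]

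Combining these steps yields $d(\calA)=|L\cap\varphi_\calA((\C^*)^n)|=|L\cap X_{\calA^+}|=\deg(X_{\calA^+})$. The main obstacle is the dimension bound on the boundary pieces, together with the existence of a nonempty open set of $L$'s that simultaneously miss every $X_{\calF^+}$ for $\calF$ a proper face, meet $X_{\calA^+}$ transversely, and pull back to polynomial systems in the generic locus guaranteed by Lemma~\ref{L:genericSystems}; all three conditions are Zariski open in the space of codimension-$n$ linear subspaces (equivalently in $(\C^\calA)^n$), so their intersection is nonempty and the generic count is well-defined.
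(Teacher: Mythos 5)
Your proof is correct and follows essentially the same approach that the paper sketches in the paragraph preceding the lemma: pass from polynomial systems to linear sections via Lemma~\ref{L3:linearSection}, use Exercise~\ref{Ex:Prinjective} to get injectivity of $\varphi_\calA$, and observe that a general codimension-$n$ linear subspace meets $X_{\calA^+}$ only inside the image of the torus. The one thing you do beyond the paper is to justify that last claim, invoking Lemma~\ref{L:BoundaryToricVars} to identify the boundary as a finite union of toric subvarieties $X_{\calF^+}$ over proper faces $\calF$, each of dimension strictly less than $n$, which a general codimension-$n$ subspace must miss --- this is a detail the paper asserts without comment, and spelling it out strengthens the exposition rather than departing from it.
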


We prove Kushnirenko's Theorem in the case when $\Aff_\Z\calA=\Z^n$ by showing that 
\[
  n!\cdot \Vol(\conv(\calA))\ =\  \deg(X_{\calA^+}) \,
\]
This proof is due to Khovanskii~\cite{Kh95} and the presentation is adapted from Chapter 3 of~\cite{IHP}, where the
general case of $\Aff_\Z\calA\subsetneq \Z^n$ is deduced from the case when $\Aff_\Z\calA=\Z^n$.

The \demph{homogeneous coordinate ring $\C[X]$} of a projective variety
$X\subset\P^\calA$  
is the quotient of the homogeneous coordinate ring $\C[z_a\mid a\in\calA]$ of $\P^\calA$
by the ideal $I_X$ of homogeneous polynomials vanishing on $X$.
These rings and ideals are graded by the total degree of the polynomials.
Writing \defcolor{$\C_d[X]$} for the $d$th graded piece of $\C[X]$, the \demph{Hilbert function $\HF_X(d)$} is the
function $d\mapsto\dim_\C \C_d[X]$. 

Hilbert proved that the Hilbert function for $d\gg 0$ is equal to a polynomial, which is now called 
the \demph{Hilbert polynomial $\HP_X(d)$} of $X$.
This encodes many numerical invariants of $X$.
For example, the degree of the Hilbert polynomial is the dimension $n$ of $X$
and its leading coefficient is $\frac{1}{n!}\deg(X)$.
For a discussion of Hilbert polynomials, see Section 9.3 of~\cite{CLO}.

We determine the Hilbert polynomial of the toric variety $X_{\calA^+}$.
Its homogeneous coordinate ring is the coordinate ring of $X_{\calA^+}\subset\C^\calA$.
By Corollary~\ref{Co:CoordinateRing}, this is $\C[\N\calA^+]$.
As the first coordinate $1$ of points of $\calA^+$ corresponds to the homogenizing parameter $t$ in~\eqref{Eq:liftMap},        
$\C[\N\calA^+]$ is graded by the first component of elements of $\N\calA^+$.
Thus $\C_d[\N\calA^+]$ has a basis $\{ (d,a) \in \N\calA^+\}$.
This is $\N\calA^+\cap d\conv(\calA^+)$, which is equal to $\defcolor{d\calA^+}$, the
set of $d$-fold sums of vectors in $\calA^+$.

\begin{example}\label{Ex:cuspidal_monoid}
 Consider this for the projectivization $X_{\calA^+}$ of the cuspidal cubic of Example~\ref{Ex:AffineTV}.
 Here, $\calA=\{0,2,3\}$ and $\varphi_\calA(s)=[1,s^2,s^3]\in\P^\calA$.
 Figure~\ref{F:cuspidal_monoid} shows its lift $\calA^+=(\begin{smallmatrix}1&1&1\\0&2&3\end{smallmatrix})$ and the
 submonoid $\N\calA^+$. 
 \begin{figure}[htb]
  \begin{picture}(156,70)(-43,0)
    \put(0,0){\includegraphics{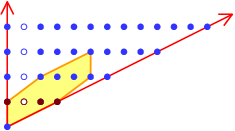}}
    \put(43,60){$\N\calA^+$}
    \put(-43,11){$\calA^+$}\put(-25,14){\vector(1,0){24}}
   \end{picture}
 \caption{Monoid generated by the lift of $\{0,2,3\}$.}
 \label{F:cuspidal_monoid}
 \end{figure}
 The open circles are points that do not lie in $\N\calA^+$. 
 The Hilbert function of $X_{\calA^+}$ has values $(1,3,6,9,12,\dotsc)$, so its Hilbert polynomial is $3d$.
 \qed
\end{example}

Projecting the set $d\calA^+$ to the last $n$ coordinates is a  
bijection with the set \defcolor{$d\calA$} of $d$-fold sums of vectors in $\calA$. 
These arguments show that 
\[
   \HF_{X_\calA}(d)\ =\ |d \calA|\,.
\]
Thus an upper bound on $\HF_{X_\calA}(d)$ is given by $|d\conv(\calA)\cap\Z^n|$, as $d\calA\subset d\conv(\calA)\cap\Z^n$.
Ehrhart~\cite{Eh62} (see also~\cite{BeRo07}) showed that for an integer polytope $P$, the counting function
\[
    \defcolor{E_P}\ \colon\ \N\ni d\ \longmapsto \ |dP\cap \Z^n|
\]
for the integer points contained in positive integer multiples of 
$P$ is a polynomial in $d$, now called the \demph{Ehrhart polynomial} of $P$.
The degree of $E_P$ is the dimension of the affine span of $P$.
When $P$ has dimension $n$, its leading coefficient is the volume of $P$.
For example, the Ehrhart polynomial of the interval $[0,3]=\conv\{0,2,3\}$ of length 3 is $3d+1$.

Now suppose that $P=\conv(\calA)$, the convex hull of $\calA$.
Since $d\calA\subset d\conv(\calA)\cap \Z^n$, we have 
the upper bound for $\HF_{X_\calA}(d)$,
 \begin{equation}\label{Eq3:Hilbert_compare}
   \HF_{X_\calA}(d)\ \leq\ E_{\conv(\calA)}(d)\,.
 \end{equation}
Note that we have this inequality for the cubic of Example~\ref{Ex:cuspidal_monoid}.

A lower bound for $\HF_{X_\calA}(d)$ is best expressed in terms of an inclusion.
Let $\defcolor{S_\calA}:=\R_\geq\calA^+\cap\Z^{1+n}$ be the monoid of all integer points that are in the nonnegative span of
$\calA^+$. 
The inequality~\eqref{Eq3:Hilbert_compare} arises from the inclusion $\N\calA^+\subset S_\calA$
by considering points with first
coordinate $d$.
We will produce a vector $v\in\N\calA^+$ and show that $v+S_\calA\subset\N\calA^+$, which we will use to show our lower bound.

Let $\defcolor{\calB}\subset S_\calA$ be the set of points $b\in\Z^n$ which may be written as
\[
   b\ =\ \sum_{a\in\calA} \lambda_a (1,a)\,,
\]
where $\lambda_a$ is a rational number in $[0,1)$.
For the set $\calA=\{0,2,3\}$ of Example~\ref{Ex:cuspidal_monoid}, $\calB$ is origin, together with the four points in
the interior of the hexagonal shaded region (a zonotope).
These are the columns of the matrix
$\MyVect{0&1&1&2&2}{0&1&2&3&4}$.

For each $b\in\calB$, fix an expression 
 \begin{equation}\label{E3:fixedExpression}
   b\ =\ \sum_{a\in\calA} \beta_a(b)(1,a) 
   \qquad (\beta_a(b)\in\Z)
 \end{equation}
as an integer linear combination of elements of $\calA^+$.
Let $-\nu$ with $\nu\geq 0$ be an integer lower bound for the coefficients $\beta_a(b)$ in these expressions for
the finitely many elements $b\in\calB$. 
For the set $\calA=\{0,2,3\}$, we may take these expressions to be 
$\MyVect{1}{1}=\MyVect{1}{0}-\MyVect{1}{2}+\MyVect{1}{3}$, 
$\MyVect{1}{2}=\MyVect{1}{2}$, 
$\MyVect{2}{3}=\MyVect{1}{0}+\MyVect{1}{3}$, 
and 
$\MyVect{2}{4}=2\MyVect{1}{2}$, 
so that $\nu=1$.
Finally, define 
\[
   \defcolor{v}\ :=\ \nu\cdot \sum_{a\in A}(1,a)\,.
\]
Its first coordinate is $\nu|\calA|$.
For the set $\calA=\{0,2,3\}$, this vector is 
$\MyVect{3}{5}=\MyVect{1}{0}+\MyVect{1}{2}+\MyVect{1}{3}$.

We claim that we have the inclusion of sets
 \begin{equation}
   \label{Eq:inclusion}
    v+S_\calA\ \subset\ \N\calA^+\ \subset\ S_\calA\,.
 \end{equation}
Comparing these sets at any level $d\geq \nu|\calA|$ gives the inequality
\[
  E_{\conv(\calA)}(d-\nu|\calA|) \ \leq\  \HF_{X_\calA}(d)\ \leq\ E_{\conv(\calA)}(d)\,,
\]
Since both the lower bound and the upper bound are polynomials in $d$ of the same degree and leading term, we deduce that
the Hilbert polynomial $\HP_{X_\calA}(d)$ has the same degree and leading term as the Ehrhart polynomial
$E_{\conv(\calA)}$.

Thus the Hilbert polynomial has degree $n$ and its leading coefficient is
the volume of $\conv(\calA)$.
Since the degree of $X_{\calA^+}$ is $n!$ times this leading coefficient, 
we conclude that the degree of $X_{\calA^+}$ is
\[
   n!\;\Vol(\conv(\calA))\,,
\]
which proves Kushnirenko's Theorem when $\Aff_\Z\calA=\Z^n$, given the inclusions~\eqref{Eq:inclusion}. 

We establish the first inclusion in~\eqref{Eq:inclusion}.
(The second was already discussed.)
Let $u\in v+S_\calA$.
Then $u-v\in S_\calA$ and so it has an expression 
\[
   u-v\ =\ \sum_{a\in\calA} \alpha_a (1,a) 
   \quad\mbox{ with }\quad \alpha_a\in\Q_\geq\,.
\]
Writing each coefficient $\alpha_a$ in terms of its fractional and integral parts gives
$\alpha_a=\lambda_a+\gamma_a$ where $\lambda_a\in[0,1)\cap\Q$ and $\gamma_a\in\N$.
Then
 \[
     u-v \ =\  \sum_{a\in\calA} \lambda_a (1,a)\ +\ \sum_{a\in\calA} \gamma_a(1,a)
        \ =\ b \ +\ c\,,
 \]
where $b\in\calB$ and $c\in\N\calA^+$.
Using the fixed expression~\eqref{E3:fixedExpression} for $b$, we have
 \[
   w\ =\  v + \sum_a \beta_a(b)(1,a)\ +\ c
    \ =\  \sum_s (\beta_a(b)+\nu)(1,a)\ +\ c\,,
 \]
which lies in $\N\calA^+$ as $-\nu\leq\beta_a$.
This establishes the inclusion of sets~\eqref{Eq:inclusion}
and completes the proof of Kushnirenko's Theorem when $\Aff_\Z\calA=\Z^n$.\qed\medskip

The vector $v$ used to establish the inclusion~\eqref{Eq:inclusion} may be replaced by a more economical vector.
For each $a\in A$, set $\defcolor{\nu_a}:=\max\{0,-\beta_a(b)\mid b\in\calB\}$.
If we set $\defcolor{v'}:=\sum_{a\in\calA} \nu_a(1,a)$, then the same argument shows that we still have an inclusion
$v'+S_\calA\subset\N\calA^+$. 
For $\calA=\{0,2,3\}$ with  $\calA^+=(\begin{smallmatrix}1&1&1\\0&2&3\end{smallmatrix})$ the new vector $v'$ is
$\MyVect{1}{2}$. 
Figure~\ref{F:cuspidal_cone}
 \begin{figure}[htb]
  \begin{picture}(204,92)(-43,0)
    \put(0,0){\includegraphics{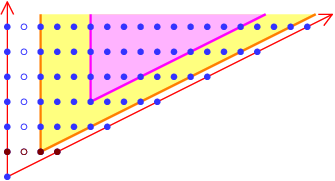}}
    \put(68,84){$\N\calA^+$}
    \put(-43,11){$\calA^+$}\put(-25,14){\vector(1,0){24}}
   \end{picture}
 \caption{Inclusions of cones for  $\calA=\{0,2,3\}$.}
 \label{F:cuspidal_cone}
 \end{figure}
shows the monoid $S_\calA$ (all the circles, filled and unfilled), 
the monoid $\N\calA^+$ (the filled circles), 
the translate $\MyVect{1}{2}+S_\calA$  (larger shaded region), 
and finally the translate $\MyVect{3}{5}+S_\calA$  (smaller shaded region).
Observe that $\MyVect{1}{2}$ is the shortest vector such that the translate $\MyVect{1}{2}+S_\calA$ lies in $\N\calA^+$.

The expression $n!\;\Vol(\conv(\calA))$ in Kushnirenko's Theorem is often called the \demph{normalized volume} of
$\conv(\calA)$. 

\subsection*{Exercises}
\begin{enumerate}[1.] 

 \item \label{Exer:TwCubic} 
   Verify that the subgroup $\Z\calA$ for the set $\calA=(\begin{smallmatrix}3&2&1&0\\0&1&2&3\end{smallmatrix})$
   of Example~\ref{Ex:TwCubic} is a full rank (rank 2) subgroup of index 3 in $\Z^2$.
   You may find the map $\Z^2\to\Z$ given by $(p,q)\mapsto p+q$ useful; consider its kernel, image, and cokernel,
   and the restrictions to $\Z\calA$.

 \item Let $\calA\subset\Z^n$ be a finite set of points.
       Show that its lift $\calA^+\subset\Z^{1+n}$ spans $\Z^{1+n}$ if and only if the set of differences
       $\{ a-b\mid \forall a,b\in\calA\}$ spans $\Z^n$.

 \item \label{Exer:Lift}
   Prove that if a finite set $\calA\subset\Z^{n}$ lies on an affine hyperplane and $\rank(\Z\calA)=1+m$,
   then there is a basis for $\Z\calA$ identifying it with $\Z^{1+m}$ and a subset $\calB\subset\Z^m$ such that 
   $\calA=\calB^+$.

 \item \label{Exer:Row-span}
   Suppose that $\calA\subset\Z^{n}$ is represented by an integer matrix, $A$.
   Show that $\calA$ lies on an affine hyperplane if and only if the row space of $A$ in
   $\R^\calA$ has a vector with every coordinate 1. 

 \item
   Prove the equivalence of the two definitions,~\eqref{Eq:Affine-span} and~\eqref{Eq:Affine-coset}, of affine span.

 \item \label{Ex:Prinjective}
   Let $\calA\subset\Z^n$ be finite.
   Show that the map $\varphi_\calA\colon(\C^*)^n\to\P^\calA$ is injective if and only if the integral affine span $\Z\calA$ of
   $\calA$ is $\Z^n$.

 \item  \label{Exer:lift_hex}
  Give a spanning set of degree two generators for $I_\calA$, where $\calA$ is the lifted hexagon of Figure~\ref{F:lift}.
  Interpret each generator as a point common to the convex hulls of two disjoint subsets of $\calA$.

 \item
  Repeat Exercise~\ref{Exer:lift_hex} for (a) the lift of the cube and (b) the lift of the octahedron.
\[
   \includegraphics[height=60pt]{figures/Cube.eps}\qquad\qquad
   \raisebox{-10pt}{\includegraphics[height=80pt]{figures/Octahedron.eps}}
\]

 \item \label{Exer:HToricIdeals} 
   Do the homogeneous version of Exercise~\ref{Exer:ToricIdeals} from Section 1.
   For a polygon $P$, let $\calA_P:=P\cap\Z^2$ be the set of integer points in $P$, and $\calA^+_P$ the lift of these
   points to $\Z^3$.
   For each polygon $P$ below, identify homogeneous binomials that generate the homogeneous toric ideal $I_{\calA^+_P}$.
   For each generator, give the coincident convex combination of Proposition~\ref{P:coincidentConvex}, and the point of $P$
   to which it corresponds.
\[
  \includegraphics{figures/LatticeV.eps}\qquad\ 
  \includegraphics{figures/LatticeDiamond.eps}\qquad\ 
  \includegraphics{figures/LatticeP.eps}\qquad\ 
  \includegraphics{figures/LatticeHexagon.eps}\qquad\ 
  \includegraphics{figures/LatticeQ.eps}
\]
 Are homogeneous toric ideals always generated by quadratic binomials?
 \item 
   Show that the Euclidean volume of the simplex $\conv\{0,e_1,\dotsc,e_n\}$ is $\frac{1}{n!}$, where $e_i$ is the
   standard coordinate unit vector in $\R^n$.
   Harder: Prove that this is the minimum volume of any lattice simplex, and that all others have volume an integer multiple
   of $\frac{1}{n!}$.

 \item  Determine the volume of the Newton polytope of the Laurent polynomial
\[
   1+ x+2y+3z -4xyz + 5x^2y + 7yz^2 + 11 x^2z^2 + 13 xy^3z + 17 y^3z^2-8x^2y^2z^2\,.
\]
   Hint: use a computer algebra system to determine the number of solutions to a general sparse system with this 
   support and apply Kushnirenko's Theorem.
   Challenge: Can you use this method to prove the volume is what you computed?

 \item  \label{Ex:weightedTermOrder}
   Let $f \in \C[x^{\pm}]$ and let $\calA := \supp(f)$. 
     For any $w \in \R^n$, we have a partial term order on $\C[x^{\pm}]$ given by
\[
       x^a\ \defcolor{\prec_w}\ x^b\qquad \text{if}\qquad w\cdot a\ <\ w\cdot b\,.
\]
      Show that $\supp(\ini_w f) = \calA_w$, where $\calA_w$ is defined in~\eqref{Eq:calA_w}.

 \item For a challenging exercise, provide a proof of Lemma~\ref{L:BoundaryToricVars}.

 \item For an even more challenging exercise, provide a proof of Lemma~\ref{L:Birch}.
\end{enumerate}

\section{Toric Varieties From Fans}\label{S:three}

Affine toric varieties $X_\calA$ are given by a finite collection $\calA$ of integer vectors.
The ideal of an affine toric variety is generated by binomials coming from elements of the integer kernel of a linear map
determined by the set $\calA$.
When the set $\calA$ lies on an affine hyperplane, the toric variety $X_\calA$ is homogeneous and gives a projective toric
variety with structure corresponding to the polytope $\conv(\calA)$.
We give an abstract (not embedded) construction of a toric variety obtained by gluing affine toric varieties, where the
affine varieties and the gluing are encoded in an object from geometric combinatorics called a rational fan.

\begin{example}\label{Ex:P1}
 The projective line $\P^1$ is the projective toric variety given by columns $\calA$ of the matrix
 $(\begin{smallmatrix}1&0\\0&1\end{smallmatrix})$.
 The corresponding integer polytope is the convex hull of the two points of $\calA$, which 
 in an appropriate coordinate system is just the interval $[0,1]$.

 In its homogeneous coordinates $[x,y]$, $\P^1$ has two standard affine patches
 $\defcolor{\C_0}:=[x,1]$ for $x\in \C$ and $\defcolor{\C_\infty}:=[1,y]$ for $y\in\C$.
 Their intersection $\C_0\cap\C_\infty$ may be identified with $\C^*$; it is the points of either patch where the parameter
 ($x$ for $\C_0$ and $y$ for $\C_\infty$) does  not vanish.
 The point $[x,1]\in\C_0^*$ is identified with the  point $[1,x^{-1}]\in\C_\infty^*$.
 Consequently, $\P^1$ is the union of two copies of $\C$, $\C_0\sqcup\C_\infty$, glued along this common set.

 We organize this using subalgebras of $\C[x,x^{-1}]$.
 First, $\C_0=\spec\C[x]$, which is identified with $\Hom_{m}(\N,\C)$, where a point $[x,1]\in\C_0$ is the
 monoid homomorphism \defcolor{$f_x$} that sends the generator $1\in\N$ to $x\in\C$ and $f_x(0)=11$ as it is a homomorphism of monoids.
 Similarly, $\C_\infty=\spec\C[x^{-1}]=\Hom_{m}(-\N,\C)$.
 Here, a point $[1,y]$ is the monoid homomorphism \defcolor{$g_y$} that sends the generator $-1$ to $y\in\C$.
 Also, $\C^*=\spec\C[x,x^{-1}]$, which is $\Hom_{m}(\Z,\C)$, where a point $z\in\C^*$ is the monoid
 homomorphism \defcolor{$h_z$} that sends $0\mapsto 1$ and $1\mapsto z$.
 The restriction of $h_z$ to $\N$ gives the map $f_z$ and its restriction to $-\N$ is the map $g_{z^{-1}}$.
 \qed
\end{example}

%
%

\subsection{Cones and Fans}

We develop more geometric combinatorics needed for the remaining material on toric varieties, in the context of objects in
$\R^n$. 
For additional reference, we recommend the books of Ewald~\cite{Ewald} and Ziegler~\cite{Ziegler}.

Let $\calA\subset\R^n$ be a finite set.
As explained in Section~\ref{S:two}, its convex hull 
\[
   \defcolor{\conv(\calA)}\ :=\ 
    \Bigl\{ \sum_{a\in\calA} a\lambda_a \mid  \sum_{a\in\calA} \lambda_a=1
        \mbox{ and } 0\leq \lambda_a \mbox{ for all }a\in\calA\Bigr\}
\]
is a \demph{polytope}, $P$.
This polytope is a closed and bounded set, so for $w\in\R^n$, the linear function on $\R^n$ given by 
$x\mapsto w\cdot x$ is bounded on $P$, and thus has a maximum value, $h_P(w)$, on $P$.
This function $w\mapsto h_P(w)$ is the support function of $P$.
The subset $\defcolor{P_w}:=\{x\in P\mid w\cdot x=h_P(w)\}$ of $P$ where this maximum is attained
is the \demph{face} of $P$ \demph{exposed} by $w$, and is again a polytope, typically of smaller dimension.
It is the convex hull of \defcolor{$\calA_w$}, which is the set of points $a\in\calA$ where $w\cdot a=h_P(w)$.
(These notions were treated in Subsection~\ref{SS:ProjectiveTV}.)
The \demph{dimension} of a polytope is the dimension of its affine span.
A face $F$ of $P$ of dimension zero is a point and it is called a \demph{vertex} of $P$.
A face of dimension one is a line segment, and it is called an \demph{edge}.
A \demph{facet} of $P$ is a face $F$ with codimension one, $\dim F = \dim P{-}1$.

\begin{example}\label{Ex:Pyramid}
 A useful construction of one polytope from another is a pyramid.
 Suppose that $P$ is a polytope of dimension $n{-}1$, which we assume lies on a hyperplane \defcolor{$H$} defined by
 $w\cdot x=b$ in $\R^n$ for some $0\neq w\in\R^n$ ($H=\{x\in\R^n\mid w\cdot x=b\}$).
 For any point $o\in\R^n\smallsetminus H$, the \demph{pyramid} with base $P$ and apex $o\in\R^n$ is the convex hull of the
 polytope $P$ and the point $o$. 
 This pyramid has height $h:=\frac{1}{\|w\|}|b-w\cdot o|=|w\cdot(o - x)|/\|w\|$ for any $x\in H$, and its volume is
 $\frac{1}{n}h\Vol_{n-1}(P)$. 
\[
   \raisebox{-30pt}{\begin{picture}(95,72)
     \put(0,0){\includegraphics{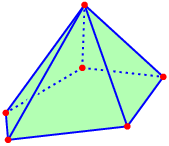}}
     \put(31,68){$o$}
     \put(83,20){$P$} \put(82,24){\vector(-1,0){20}}
   \end{picture}}
   \eqno{\MyDiamond}
\]
\end{example}

By the definition of the support function $h_P(w)$ of $P$, for any $w\in\R^n$, we have 
\[
   P\ \subset\ \{x\in\R^n \mid w\cdot x \leq h_P(w)\}\,.
\]
This set is a \demph{half space} and its boundary $\{x\mid w\cdot x= h_P(w)\}$ is a 
\demph{supporting hyperplane} of $P$.
Note that $P_w$ is the intersection of $P$ with the supporting hyperplane corresponding to $w$.
As a closed, convex body,  $P$ is the intersection of all half-spaces that contain it,
 \begin{equation}\label{Eq:dualDescription}
   P\ =\ \bigcap_{w\in\R^n}\{x\in\R^n \mid w\cdot x \leq h_P(w)\}\,.
 \end{equation}
For example, the lattice octahedron~\eqref{Eq:SomePolytopes} $\{(x,y,z)\in\R^3\mid |x|+|y|+|z|\leq 1\}$ is the intersection 
of the eight half spaces, 
$\{(x,y,z)\in\R^3\mid \pm x\pm y\pm z\leq 1\}$, one for each choice of the three signs $\pm$.
The lattice pentagon is the intersection of five half spaces,
\[
   \raisebox{-21pt}{\includegraphics{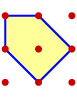}} \ =\ 
   \raisebox{-21pt}{\includegraphics{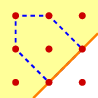}} \ \bigcap\ 
   \raisebox{-21pt}{\includegraphics{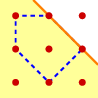}} \ \bigcap\ 
   \raisebox{-21pt}{\includegraphics{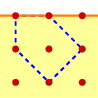}} \ \bigcap\ 
   \raisebox{-21pt}{\includegraphics{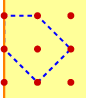}} \ \bigcap\ 
   \raisebox{-21pt}{\includegraphics{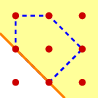}}\ .
\]
In these examples, the polytope $P$ is the intersection of finitely many half spaces, one for each facet of $P$.
This is true for all polytopes.

\begin{proposition}
 A polytope $P$ is the intersection of finitely many half spaces, one for each facet of $P$.
\end{proposition}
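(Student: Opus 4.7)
The plan is to establish $P = \bigcap_{F \text{ facet}} H_F$, where $H_F := \{x \in \R^n : w_F \cdot x \le h_P(w_F)\}$ is the supporting half-space at the facet $F$ with outer normal $w_F$. Finiteness of the set of facets is immediate from $P = \conv(\calA)$ with $\calA$ finite: each exposed face is $\conv(\calA_w)$ for some subset $\calA_w \subseteq \calA$, so there are at most $2^{|\calA|}$ exposed faces and in particular finitely many facets. The easy inclusion $P \subseteq \bigcap_F H_F$ is immediate from the supporting property of each $H_F$.

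For the reverse inclusion $\bigcap_F H_F \subseteq P$, I would start from~\eqref{Eq:dualDescription} and show that every half-space $H_w := \{x : w \cdot x \le h_P(w)\}$ appearing in that a priori infinite intersection is redundant given the facet half-spaces. The key input is that the normal cone $\mathcal{N}(P_w) := \{v \in \R^n : P_v \supseteq P_w\}$ to the exposed face $P_w$ is positively generated by the rays $\mathcal{N}(F_i)$ attached to the facets $F_i \supseteq P_w$. Granting this, write $w = \sum_{i=1}^k \lambda_i w_{F_i}$ with $\lambda_i \ge 0$. Any $x \in P_w$ also lies in each $F_i$, so $h_P(w_{F_i}) = w_{F_i} \cdot x$ for every $i$, whence $h_P(w) = w \cdot x = \sum_i \lambda_i\, h_P(w_{F_i})$. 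For $y \in \bigcap_F H_F$ the bounds $w_{F_i}\cdot y \le h_P(w_{F_i})$ then sum to $w \cdot y \le h_P(w)$, placing $y \in H_w$ and finishing the reverse inclusion via~\eqref{Eq:dualDescription}.

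The main obstacle is the structural claim about $\mathcal{N}(P_w)$ being positively generated by its facet rays. I would prove it by induction on the codimension of $P_w$: the base case, in which $P_w$ is itself a facet, is trivial, and the inductive step reduces, by slicing $P$ with a supporting hyperplane exposing a facet $F \supseteq P_w$, to the same claim for the polytope $F$ of one lower dimension, together with the observation that $\mathcal{N}(P_w)$ is spanned by $w_F$ and the pullbacks of the facet normals of $P_w$ inside $F$. A secondary issue arises if $P$ is not full-dimensional in $\R^n$: then $\Aff(\calA)$ itself must be cut out. I would handle this by running the main argument inside $\Aff(\calA)$ to obtain the intrinsic facet half-spaces, then expressing $\Aff(\calA)$ as an intersection of $2\,\mathrm{codim}(\Aff(\calA))$ closed half-spaces in $\R^n$ to recover an ambient presentation.
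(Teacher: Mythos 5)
The paper states this proposition without proof; it is treated as standard background from convex geometry, and the section has already pointed the reader to Ewald~\cite{Ewald} and Ziegler~\cite{Ziegler} for such material. So there is no argument in the text to compare yours against, and I assess the proposal on its own.

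Your overall plan is the standard and correct one: by~\eqref{Eq:dualDescription} it suffices to show that every half-space $H_w$ is implied by the facet half-spaces, and this follows once you know that $w$ lies in the cone positively generated by the outer normals of the facets containing $P_w$. The finiteness count, the forward inclusion, the computation $h_P(w) = w\cdot x = \sum_i \lambda_i h_P(w_{F_i})$ for $x\in P_w$ and the subsequent inequality, and the reduction of the lower-dimensional case to the full-dimensional one inside $\Aff(\calA)$ are all fine.

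The one step that is genuinely under-argued is the inductive proof of the structural claim about $\mathcal{N}(P_w)$. Writing $v = c\,w_F + \bar v$ with $\bar v$ the component in the hyperplane of $F$, the inductive hypothesis gives $\bar v = \sum_i \mu_i n_{G_i}$ with $\mu_i\ge 0$, where $n_{G_i}$ is the outer normal in $F$ to the facet $G_i = F\cap F'_i$. Lifting via $w_{F'_i} = \alpha_i n_{G_i} + \beta_i w_F$ (with $\alpha_i>0$) and substituting yields
\[
   v\ =\ \Bigl(c - \sum_i \tfrac{\mu_i\beta_i}{\alpha_i}\Bigr)\,w_F\ +\ \sum_i \tfrac{\mu_i}{\alpha_i}\, w_{F'_i}\,,
\]
and the nonnegativity of the $w_F$-coefficient is not automatic: $c$ itself can be negative (take a parallelogram and a vertex whose two incident facet normals make an obtuse angle; then for $v$ one of those normals and $F$ the other facet, $c=v\cdot w_F<0$), and the sign of the correction term flips with the dihedral angles. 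The claim is true, but the phrase that $\mathcal{N}(P_w)$ is ``spanned by $w_F$ and the pullbacks'' elides the verification that the final $w_F$-coefficient is $\ge 0$, which is the heart of the matter. Given what this paper already asserts --- that the normal fan $\Sigma_P$ is a fan, that its rays expose the facets of $P$, and that a pointed cone is the sum of its rays --- a cleaner route to the structural claim in this context is: for $P$ full-dimensional, $\Sigma_P$ has trivial lineality, so each $\mathcal{N}(P_w)$ is a pointed cone of $\Sigma_P$; its rays are exactly the one-dimensional cones $\R_{\geq} w_F$ of $\Sigma_P$ contained in it, namely those with $F\supseteq P_w$; hence $\mathcal{N}(P_w)$ is their positive hull. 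This sidesteps the coefficient bookkeeping entirely.
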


A \demph{polyhedron} is the intersection of finitely many half spaces, and a bounded polyhedron is a
polytope.
Here are four unbounded polyhedra in $\R$, $\R^2$, $\R^2$, and $\R^3$, respectively.
\[
  \raisebox{29pt}{\includegraphics{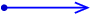}}\qquad
  \includegraphics{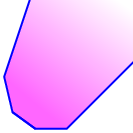}\qquad
  \includegraphics{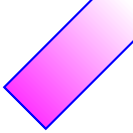}\qquad
  \includegraphics{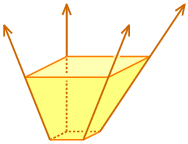}\qquad
\]
A polyhedron $P$ has a support function $h_P(w)$ that takes values in $\R\cup\{\infty\}$.
When $P$ is unbounded in the direction of $w$, then $h_P(w)=\infty$.
With this definition, the description~\eqref{Eq:dualDescription} holds for $P$.

A (convex) \demph{cone} is a polyhedron for which each supporting hyperplane contains the origin,
and is therefore a linear subspace.
Equivalently, a cone $\sigma$ is a polyhedron whose support function only takes values 0 and $\infty$.
The half spaces that define a cone all have the form
\[
   \{x\in\R^n \mid w\cdot x \leq 0\}\,.
\]
Such a half space forms an additive monoid under addition and its boundary hyperplane is a linear space consisting of the
invertible elements in this monoid.  

Consequently, a cone $\sigma$ is a monoid under addition and the intersection of its boundary hyperplanes is a linear
subspace $\ell$ of $\R^n$, called the \demph{lineality space} of $\sigma$.
The lineality space is the set of invertible elements in $\sigma$.
When the lineality space is the origin, the cone is \demph{pointed}
(also called \demph{strictly convex} or \demph{strongly convex}).
A face $\tau$ of a cone $\sigma$ is again a cone and 
the lineality space of $\sigma$ is its minimal face.
Figure~\ref{F:cones} shows four cones, two in $\R^2$ and two in $\R^3$.
\begin{figure}[htb]
  \begin{picture}(84,91)(0,-7)
    \put(0,0){\includegraphics{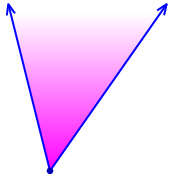}}
    \put(20,-9){$0$}
   \end{picture}
    \qquad
  \begin{picture}(85,91)(0,-7)
    \put(0,0){\includegraphics{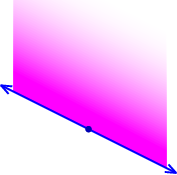}}
    \put(39,11){$0$}
    \put(1,25){$\ell$}
   \end{picture}
    \qquad
  \begin{picture}(85,91)(0,-7)
    \put(0,0){\includegraphics{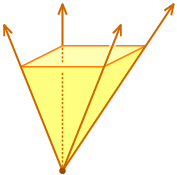}}
    \put(26,-9){$0$}
   \end{picture}
  \qquad
  \begin{picture}(104,91)(0,-7)
    \put(0,0){\includegraphics{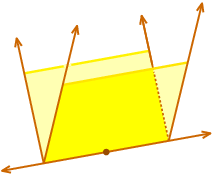}}
    \put(48,0){$0$}
    \put(2,7){$\ell$}
   \end{picture}
 \caption{Four cones.}
 \label{F:cones}
\end{figure}
The first and the third are pointed, while the second and fourth have a one-dimensional lineality space.
The second is a half space.

The origin is the minimal face of a pointed cone $\sigma$. 
The \demph{rays} of a pointed cone $\sigma$ are its one-dimensional faces.
Each ray $\rho$ has the form $\R_{\geq} x$ for any nonzero element $x$ of $\rho$.
While a polytope is the convex hull of its vertices, a pointed cone is the sum of its rays.
For example, the third cone in Figure~\ref{F:cones} is
$\R_\geq \MySmTVec{0}{1}{2} +\R_\geq \MySmTVec{0}{-1}{2}+\R_\geq \MySmTVec{1}{0}{2} +\R_\geq \MySmTVec{-1}{0}{2}$.
More generally, any cone is a sum of rays.
The fourth cone in Figure~\ref{F:cones} is
$\R_\geq \MySmTVec{0}{1}{2} +\R_\geq \MySmTVec{0}{-1}{2}+\R_\geq \MySmTVec{1}{0}{0} +\R_\geq \MySmTVec{-1}{0}{0}$.

Another important object is a \demph{polyhedral complex}.
This is a collection \defcolor{$\calP$} of polyhedra in $\R^d$ such that every face of every polyhedron in $\calP$ is another
polyhedron in $\calP$ and the intersection of any two polyhedra $P,P'$ in $\calP$ is a
common face of each.
For example, of the four collections of vertices, line segments, and polyhedra below,
the first three are polyhedral complexes, while the last is not; the large
triangle does not meet either of the smaller triangles in one of its faces. 
\[
   \includegraphics{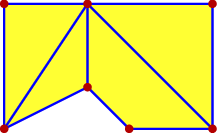}\qquad
   \includegraphics{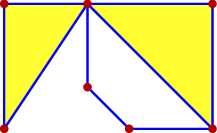}\qquad
   \includegraphics{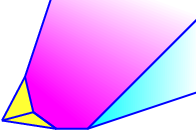}\qquad
   \includegraphics{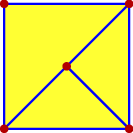}
\]
A polytope together with all of its faces forms a polyhedral complex.
The boundary of a polytope (all of its proper faces) forms a polyhedral complex.
For a less simple example, suppose that $o\in P$ is any point of a polytope $P$.
For every face $F$ of $P$ that does not contain $o$ we may consider the pyramid with base $F$ and apex $o$.
This collection of pyramids, their bases, and the apex $o$ forms a polyhedral subdivision of $P$.

The  \demph{support} of a polyhedral complex $\calP$ is the union of the polyhedra in $\calP$.
When the support of a polyhedral complex is a polyhedron $P$, the complex is a \demph{subdivision} of $P$.
When the support is a polytope $P$, we have the following formula for the volume of $P$,
\[
   \Vol_n(P)\ =\ \sum_{Q\in\calP} \Vol_n(Q)\,.
\]

When every polytope in a polyhedral complex $\calP$ is a
simplex, we say that $\calP$ is a \demph{triangulation} of its support.
Of the four polyhedral subdivisions below, the last two are triangulations.
\[
   \includegraphics{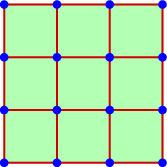}\qquad
   \includegraphics{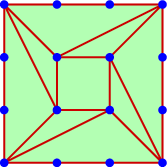}\qquad
   \includegraphics{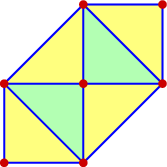}\qquad
   \includegraphics[height=80pt]{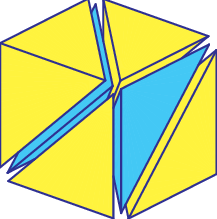}
\]

Our last object in this tour of geometric combinatorics is a \demph{fan}, which is 
a polyhedral complex, all of whose polyhedra are cones.
If the support of a fan is the ambient space, then the fan is said to be \demph{complete}.
Figure~\ref{F:fans} shows some fans in $\R^2$ and $\R^3$.
\begin{figure}[htb]
   \includegraphics{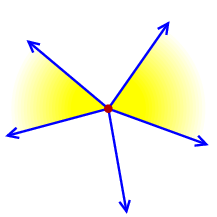}\qquad
   \includegraphics{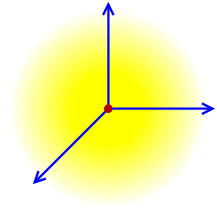}\qquad
   \includegraphics{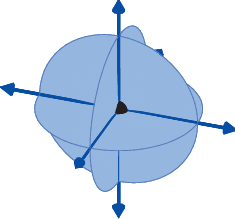}
 \caption{Three fans.}
 \label{F:fans}
\end{figure}
The second is complete, and the third is complete, if we include the eight implied open cones.

Given a polytope $P$ in $\R^n$, define an equivalence relation on the dual $\R^n$ by 
\defcolor{$v\sim w$} if and only if $P_v=P_w$, so that $v$ and $w$ expose the same face of $P$.
The closure of each equivalence class is a cone in $\R^n$, and these cones together form the 
\demph{(outer) normal fan} to the polytope $P$, which is a complete fan.
The rays of the normal fan expose facets of $P$.

\begin{example}\label{Ex:NormalFans}
The third fan in Figure~\ref{F:fans} is the outer normal fan to the lattice cube~\eqref{Eq:SomePolytopes}.
Indeed, the standard unit vectors $\MySmTVec{1}{0}{0}$, $\MySmTVec{0}{1}{0}$, and $\MySmTVec{0}{0}{1}$, together with their 
negatives, $\MySmTVec{-1}{0}{0}$, $\MySmTVec{0}{-1}{0}$, and $\MySmTVec{0}{0}{-1}$, expose the six facets of the cube.
An edge between two facets exposed by vectors $v$ and $w$ is exposed by any vector $\lambda v+\mu w$ where $\lambda,\mu>0$,
and all vectors in the interior of each orthant expose the same vertex.

For more examples, we display a regular heptagon and a lattice hexagon, together with their normal fans.
\[
  \includegraphics{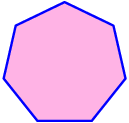}\qquad
  \raisebox{-10pt}{\includegraphics{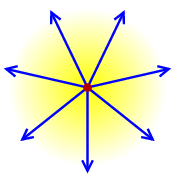}}\qquad\qquad
  \includegraphics{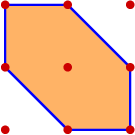}\qquad
  \raisebox{-10pt}{\includegraphics{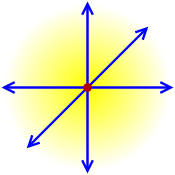}}
\]
Here are two views of the same polytope in $\R^3$, together with its normal fan having the same orientation.
\[
  \raisebox{10pt}{\includegraphics[height=70pt]{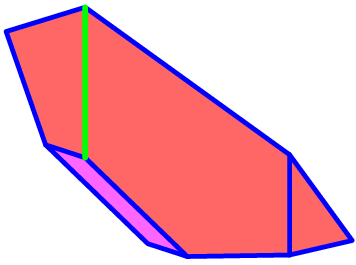}}\qquad
  \includegraphics[height=90pt]{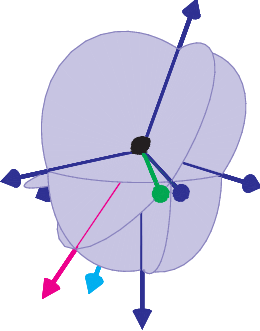}\qquad\qquad
  \raisebox{5pt}{\includegraphics[height=80pt]{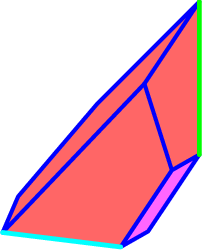}}\qquad
  \includegraphics[height=90pt]{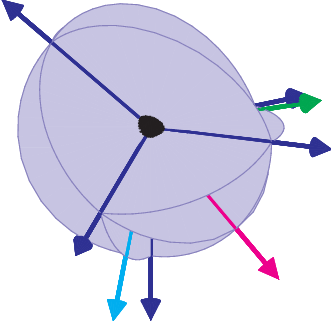}
\]
The \Magenta{magenta ray} in the normal fan is normal to the \Magenta{magenta facet}, 
the \Green{green ray} exposes the \Green{green edge}, and
the \Cyan{cyan ray} exposes the \Cyan{cyan edge}.
\qed
\end{example}




\subsection{Toric Varieties From Fans}

We give a construction of a toric variety by gluing affine toric varieties together
along common open subsets.
This is the original construction/definition of a toric variety from~\cite{Demazure}.
As in Sections~\ref{S:one} and~\ref{S:two}, we work with the torus $(\C^*)^n$, its group
$N=\Hom(\C^*,(\C^*)^n)\simeq\Z^n$ of cocharacters, and its group
$M=\Hom((\C^*)^n,\C^*)\simeq\Z^n$ of characters.
Let $\defcolor{N_\R}:=\R\otimes_\Z N\simeq\R^n$ be the real vector space spanned by the cocharacters
and $\defcolor{M_\R}:=\R\otimes_\Z M\simeq\R^n$ be the real vector space spanned by the characters.
Note that $N\subset N_\R$ and $M\subset M_\R$ in the same way as $\Z^n\subset\R^n$.
We write $\langle\bullet,\bullet\rangle\colon N_\R\times M_\R \to \R$ for the pairing
between $N_\R$ and $M_\R$, extending that between $N$ and $M$.

A (rational) fan $\Sigma\subset N_\R$ is a fan in $ N_\R$ in which every cone is defined by
inequalities coming from elements $a$ of $M$.
The half spaces defining the cones all have the form
\[
   \{ w\in  N_\R\mid \langle w,a\rangle\geq 0\}
   \qquad\mbox{ for some }\quad a\in M\,.
\]
The linear span of a cone $\sigma$ in a rational fan $\Sigma$ is a rational linear space in that it is spanned by its
intersection with $N$. 
The notions of rational fan and rational linear subspace also make sense in $ M_\R$.
We will assume that all cones in $\Sigma$ are pointed, as this simplifies the exposition.
In general, all cones in a rational fan have the same lineality space, $L_\R$, which contains a full rank sublattice
$L=N\cap L_\R$.
Replacing $N$ by $N/L$, $M$ by $L^\perp$, and every cone $\sigma$ of $\Sigma$ by its image in $N_\R/L_\R$, we obtain a
rational fan $\Sigma/L_\R$ that is pointed.
The price we pay for this is that the torus for the resulting toric varieties is identified with its dense orbit, which
leads to a loss of flexibility in our notion of toric variety:
E.g.\ the closure of a torus orbit in such a toric variety is a subvariety that is a toric variety---but for a different torus.

Given a rational cone $\sigma\subset N_\R$, its \demph{dual cone} is\
\[
   \defcolor{\sigma^\vee}\ :=\ \{x\in M_\R\,\mid\, \langle w,x \rangle \geq 0 \mbox{ for }w\in\sigma\}\,. 
\]
This rational cone has full dimension in $ M_\R$.
Its lineality space is \defcolor{$\sigma^\perp$}, the annihilator of
$\sigma$, which has dimension $n{-}\dim\sigma$.
This is the set of $x\in M_\R$ such that $\langle w,x\rangle=0$ for all $w\in\sigma$. 

\begin{example}
 The cone $\R_\geq\MySmVec{1}{0}+\R_\geq\MySmVec{0}{1}$ (the first quadrant in $\R^2$) has dual cone 
 $\R_\geq\MySmVec{1}{0}+\R_\geq\MySmVec{0}{1}$, the first quadrant in the dual $\R^2$.
 More interesting is the cone $\sigma=\R_\geq\MySmVec{1}{2}+\R_\geq\MySmVec{2}{1}$,
 whose dual cone is $\sigma^\vee=\R_\geq\MySmVec{2}{-1}+\R_\geq\MySmVec{-1}{2}$.
 We display all three cones below.
 \begin{equation}\label{Eq:conesAndDuals}
   \raisebox{-43pt}{\begin{picture}(64,97)
    \put(0,30){\includegraphics{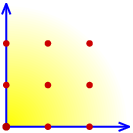}}
    \put(10,15){self-dual}
   \end{picture}
   \qquad\qquad
   \begin{picture}(64,97)
    \put(0,30){\includegraphics{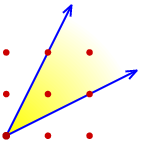}}
    \put(29,17){$\sigma$}
   \end{picture}
   \qquad\quad
   \begin{picture}(64,97)
    \put(0,0){\includegraphics{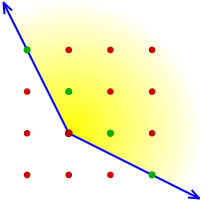}}
    \put(0,17){$\sigma^\vee$}
   \end{picture}}
 \end{equation}
 We give an example in $\R^3$.
 The two-dimensional cone $\sigma=\R_\geq\MySmTVec{1}{0}{0}+\R_\geq\MySmTVec{0}{1}{0}$ has dual cone
 $\sigma^\vee=\R_\geq\MySmTVec{1}{0}{0}+\R_\geq\MySmTVec{0}{1}{0}+\R\MySmTVec{0}{0}{1}$, whose
 lineality space is the vertical axis.
\[
   \raisebox{-40pt}{
    \begin{picture}(64,84)
     \put(0,14.5){\includegraphics{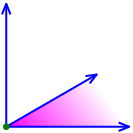}}
     \put(25,5){$\sigma$}
    \end{picture}
   \qquad\qquad
   \begin{picture}(64,84)
     \put(0,0){\includegraphics{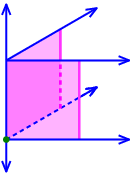}}
     \put(23,4){$\sigma^\vee$}
   \end{picture}}
   \eqno{\MyDiamond}
\]
\end{example}

Given a pointed rational cone $\sigma\subset N_\R$, set
\[
  \defcolor{S_\sigma}\ :=\   \sigma^\vee\cap M\ =\ 
   \{a\in M\,\mid\, \langle w,a \rangle \geq 0 \mbox{ for }w\in\sigma\}\,, 
\]
which is a submonoid of $M$.
Its group of invertible elements is the free abelian group
$\sigma^\perp\cap M$ which has rank $n-\dim\sigma$, the dimension of the lineality space $\sigma^\perp$ of $\sigma^\vee$.
Finally, let $\defcolor{V_\sigma}:=\Hom_{m}(S_\sigma,\C)$, the set of monoid homomorphisms from $S_\sigma$ to $\C$.
As we saw in Section~\ref{S:one}, this is equal to the spectrum of the monoid algebra $\C[S_\sigma]$ and so
$V_\sigma$ is an affine toric variety.
(This requires a theorem of Hilbert that such a monoid is finitely generated.)
However, it is not naturally embedded in an affine space $\C^\calA$.
For this, we need to choose a set $\calA\subset S_\sigma$ that generates $S_\sigma$ as a monoid, so that $S_\sigma=\N\calA$. 
This is this is equivalent to the surjectivity of the map $\N^\calA\rightarrow S_\sigma$ given by 
$(n_a\mid a\in\calA)\mapsto\sum_a an_a$.

\begin{example}\label{Ex:coneOverTwCubic}
 For the cone  $\sigma=\R_\geq\MySmVec{1}{2}+\R_\geq\MySmVec{2}{1}$, a generating set for $S_\sigma$ is 
 $\calA=\MySmVec{2&1&0&-1}{-1&0&1&2}$, which is highlighted in~\eqref{Eq:conesAndDuals}.
 The map
\[
   \Hom_{m}(S_\sigma,\C)\ \ni\ f\ \longmapsto\ 
    \bigl(f\MySmVec{2}{-1},f\MySmVec{1}{0},f\MySmVec{0}{1},f\MySmVec{-1}{2}\bigr)\ \in\ \C^\calA
\]
 is an embedding of $V_\sigma$ into $\C^\calA$.
 The image satisfies the equations
\[
   z_{\MySmVec{2}{-1}}z_{\MySmVec{0}{1}} - z_{\MySmVec{1}{0}}^2\,,\ \ 
   z_{\MySmVec{2}{-1}}z_{\MySmVec{-1}{2}} - z_{\MySmVec{1}{0}}z_{\MySmVec{0}{1}}\,,\ \mbox{ and }\ 
   z_{\MySmVec{1}{0}}z_{\MySmVec{-1}{2}} - z_{\MySmVec{0}{1}}^2\,,
\]
 which are given by the relations in $S_\sigma$ satisfied by elements of $\calA$.
 These are the same equations as~\eqref{Eq:TwistedCubicEquations},
 and so this affine toric variety is the affine cone over the rational normal curve.
\qed
\end{example}

Suppose that $\tau$ is a face of a rational cone $\sigma\subset N_\R$.
Then $\sigma^\vee\subset\tau^\vee$, as $\tau^\vee$ involves fewer inequalities.
This gives an inclusion $S_\sigma\subset S_\tau$ of monoids, and that in turn gives an inclusion of affine toric
varieties associated to these cones, 
\[
   V_\tau\ =\ \Hom_{m}(S_\tau,\C)\ \subset\ 
   \Hom_{m}(S_\sigma,\C)\ =\ V_\sigma\,.
\]
Here, the inclusion is given by restricting a monoid homomorphism $g\colon S_\tau\to\C$ to the submonoid $S_\sigma$.

\begin{definition}
 Let $\Sigma\subset N_\R$ be a rational fan.
 The toric variety \demph{$X_\Sigma$} associated to $\Sigma$ is obtained from the collection
 $\{V_\sigma\mid\sigma\in\Sigma\}$ of affine toric varieties associated to cones in the fan $\Sigma$ by gluing along the
 natural inclusions $V_\tau\subset V_\sigma$ whenever $\tau,\sigma$ are cones in $\Sigma$ with $\tau$ a face of
 $\sigma$. 
 As the minimal cone in $\Sigma$ is the origin $\bzero$ and $\bzero^\perp=M_\R$ so that $S_{\bzero}=M$.
 Thus $V_\bzero$ is the torus $\Hom_{m}(M,\C)=\Hom_g(M,\C^*)$.
 This lies in every affine toric patch $V_\sigma$ and the gluing is torus-equivariant.
 This shows that the toric variety $X_\sigma$ has an action of this torus.\qed
\end{definition}

\begin{example}
 Example~\ref{Ex:P1} constructs the projective line $\P^1$ from the fan $\Sigma\subset\R$
 whose cones are $\sigma=\R_\geq$, the nonnegative real numbers, 
 $\rho=\R_\leq$, the nonpositive real numbers, and $\tau=\{0\}$, the origin.
\[
   \begin{picture}(124,23)(0,-9)
    \put(30,9){$\rho$}    \put(59,9){$\tau$}    \put(90,9){$\sigma$}
   
    \put(0,0){\includegraphics{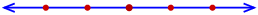}}
                           \put(59,-9){$0$}
   \end{picture}
\]
 Then $V_\sigma=\Hom_{m}(\N,\C)$, $V_\rho=\Hom_{m}(-\N,\C)$, and 
 $V_\tau=\Hom_{m}(\Z,\C)$, which are the sets $\C_0$, $\C_\infty$, and $\C^*$ of Example~\ref{Ex:P1}, and the gluing is
 the same as in Example~\ref{Ex:P1}.\qed
\end{example}

We give a detailed construction of a less trivial toric variety $X_\Sigma$ in Section~\ref{SS:DoublePillow}.\smallskip

We connect abstract toric varieties to the projective toric varieties of Section~\ref{S:two}.
For this, let $P\subset M_\R$ be a polytope with vertices in $M$ and set $\calA:=P\cap M$.
Write \defcolor{$X_P$} for the projective toric variety $X_{\calA^+}$.
Let $\defcolor{\Sigma_P}\subset N_\R$ be the outer normal fan of $P$.
Each cone $\sigma\in\Sigma_P$ corresponds to a unique face \defcolor{$P_\sigma$} of $P$.
(Recall that each cone $\sigma\in\Sigma_P$ is the closure of the set of points of $N_\R$ that expose a given face of $P$.) 
More specifically, let \defcolor{$\sigma^\circ$} be the \demph{relative interior} of $\sigma$, the set-theoretic
difference of $\sigma$ with the union of its proper faces.
Then $P_\sigma$ is the face of $P$ exposed by any point $w\in\sigma^\circ$.
Also, the linear span of differences $b-a$ of points $a,b\in P_\sigma$ is the lineality space of $\sigma^\vee$.

For a cone $\sigma\in\Sigma_P$, we define a map $\phi_\sigma\colon V_\sigma\to\P^\calA$ whose image lies in the
projective toric variety $X_P$.
Choose any point $b\in P_\sigma\cap M$.
By the definition of $P_\sigma$, if $w\in\sigma$ and $a\in\calA(=P\cap M)$, then 
$\langle w, b\rangle\geq\langle w, a\rangle$, so that $\langle w, b-a\rangle\geq 0$.
Since this inequality holds for all $w\in\sigma$, we have that $b-a\in \sigma^\vee\cap M = S_\sigma$.
For an element $f\in V_\sigma = \Hom_{m}(S_\sigma,\C)$, define $\defcolor{\varphi_{b,\sigma}(f)}\in\P^\calA$ to be the
point $[ f(b-a)\mid a\in\calA]$.
We note that the sign here, $b-a$ for $a\in\calA$, is to conform with our use of the outer normal fan.
For the inner normal fan, use $a-b$ for $a\in\calA$.

\begin{lemma}\label{L:FirstCheck}
 For any two elements $b,b'\in P_\sigma\cap M$ and $f\in V_\sigma$, we have that 
 $\varphi_{b,\sigma}(f) = \varphi_{b',\sigma}(f)$ as points in $\P^\calA$.
 For any $f\in V_\sigma$, $\varphi_{b,\sigma}(f)\in X_P$.
 For any point $x \in X_P$ with $x_c\neq 0$ for $c\in P_\sigma\cap M$, there is a point $f\in V_\sigma$ with 
 $x=\varphi_{b,\sigma}(f)$.
\end{lemma}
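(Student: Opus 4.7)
The plan is to handle the three claims in order, with parts (1) and (2) being direct calculations using the semigroup-homomorphism property of $f$, and part (3) requiring an explicit construction of $f$ from $x$.

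For (1), the crucial observation is that for $b, b' \in P_\sigma \cap M$, the difference $b - b'$ lies in $\sigma^\perp \cap M$, which is the group of units of $S_\sigma$. Indeed, every $w \in \sigma$ exposes a face $P_w$ of $P$ containing $P_\sigma$ (cones in the normal fan correspond contravariantly to faces of $P$), so $\langle w, b\rangle = h_P(w) = \langle w, b'\rangle$, giving $\langle w, b - b'\rangle = 0$ for all $w \in \sigma$. Consequently $f(b - b') \in \C^*$, and writing $b - a = (b - b') + (b' - a)$ in $S_\sigma$ yields $f(b - a) = f(b - b') \cdot f(b' - a)$, so the vectors $(f(b - a))_{a}$ and $(f(b' - a))_a$ differ by the overall nonzero scalar $f(b - b')$, giving the same point of $\P^\calA$.

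For (2), I would invoke Theorem~\ref{Th:Lin_Span_Toric}: it suffices to check that each spanning binomial $z^u - z^v$ with $\calA^+ u = \calA^+ v$ (equivalently $\sum u_a = \sum v_a =: d$ and $\calA u = \calA v$) vanishes at $\varphi_{b,\sigma}(f)$. Since each $b - a \in S_\sigma$ and $f$ is additive-to-multiplicative,
\[
\prod_{a \in \calA} f(b-a)^{u_a} \;=\; f\Bigl(\sum_a u_a(b-a)\Bigr) \;=\; f(d b - \calA u)\,,
\]
and the analogous identity for $v$ yields the same value, so the binomial vanishes. The image is a bona fide point of $\P^\calA$ because the coordinate at $a = b$ equals $f(0) = 1 \neq 0$.

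For (3), normalize $x$ so that $x_b = 1$, and reverse-engineer $f$. First define a candidate $\tilde f$ on the subsemigroup $\N\{b - a : a \in \calA\} \subseteq S_\sigma$ by $\tilde f(b - a) := x_a$, extended additively-to-multiplicatively. Well-definedness amounts to: whenever $\sum_a u_a(b - a) = \sum_a v_a(b - a)$ — equivalently $(\sum_a u_a - \sum_a v_a)\, b = \calA u - \calA v$ — one must have $\prod_a x_a^{u_a} = \prod_a x_a^{v_a}$. Absorbing the degree difference into the coordinate at $b$ (using $x_b = 1$) reduces this to a binomial relation of the form $\calA^+ u' = \calA^+ v'$, which vanishes on $x \in X_P$ by Theorem~\ref{Th:Lin_Span_Toric}. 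Next, since $\tilde f(b - c) = x_c \neq 0$ for $c \in P_\sigma \cap M$ and $b - c \in \sigma^\perp \cap M$ is a unit of $S_\sigma$, one may adjoin $\tilde f(c - b) := x_c^{-1}$, extending $\tilde f$ to the subsemigroup $\N\{b - a : a \in \calA\} + \Z\{c - b : c \in P_\sigma \cap M\}$. The main obstacle is then to show that this enlarged subsemigroup equals all of $S_\sigma$: every $s \in \sigma^\vee \cap M$ must admit an expression $s = \sum_a u_a(b - a) + \sum_c n_c(c - b)$ with $u_a \in \N$ and $n_c \in \Z$. This is a combinatorial-geometric fact about the dual cone $\sigma^\vee$ being generated near $P_\sigma$ by $\calA$-vectors up to $\sigma^\perp$-corrections; it is the place where the hypothesis $\calA = P \cap M$ and sufficient richness of lattice points of $P$ (e.g., very ampleness, or passing to a lattice multiple $kP$) is essential. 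Once this is in hand, $\tilde f$ extends uniquely to a semigroup homomorphism $f \in V_\sigma$ with $\varphi_{b,\sigma}(f) = x$ by construction.
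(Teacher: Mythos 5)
Parts~(1) and~(2) of your argument match the paper's proof essentially verbatim; the observation that the image is a genuine point of $\P^\calA$ (via $f(0)=1$) is a useful aside that the paper leaves implicit.

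Part~(3) is where you go astray. You reduce the statement to the claim that the subsemigroup
\[
 \N\{b-a : a\in\calA\} \ +\ \Z\{c-b : c\in P_\sigma\cap M\}
\]
equals all of $S_\sigma$, and you candidly observe that this ``is the place where\dots sufficient richness of lattice points of $P$ (e.g., very ampleness, or passing to a lattice multiple $kP$) is essential.'' But the lemma carries no very-ampleness or normality hypothesis, and the equality of semigroups you need is in general \emph{false}: $\N\{b-a\}$ need not be saturated in $S_\sigma=\sigma^\vee\cap M$, even after adjoining the units $\sigma^\perp\cap M$. The paper itself exhibits a failure of this kind in Example~\ref{Ex:cuspidal_semigroup}, where $\calA=\{0,2,3\}$ gives $\N\calA^+\subsetneq S_\calA$ --- the lattice point $(1,1)$ is in the cone but not in the semigroup. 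So the ``combinatorial-geometric fact'' you are hoping for is not available, and you would be forced to add hypotheses that the statement does not have.

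The paper's route through this step is different and avoids the difficulty entirely. It defines $f$ on $\N\{b-a : a\in\calA\}$ by $f(b-a):=x_a x_b^{-1}$, checks well-definedness exactly as you do, and then \emph{extends} $f$ from this (possibly proper) subsemigroup to all of $S_\sigma$ using that $\C$ is algebraically closed. The point is that $\N\{b-a\}$ and $S_\sigma$ generate the same rational cone $\sigma^\vee$, so every $s\in S_\sigma$ has a positive integer multiple $ks\in\N\{b-a\}$; one can then set $f(s)$ to be a $k$th root of $f(ks)$, choosing roots consistently (this is where algebraic closure is used). This extension is the content of the algebra-geometry statement that $\spec\C[\N\{b-a\}]\to\spec\C[S_\sigma]$ is a finite surjection on tori, hence surjective on closed points. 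Your part~(3) should be rewritten with this extension argument in place of the (incorrect) claim that the subsemigroup is all of $S_\sigma$.
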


\begin{proof}
 For $b,b'\in P_\sigma\cap M$, $b-b'\in\sigma^\perp$ so that if $f\in V_\sigma$, then $f(b-b')$ is a nonzero scalar.
 For any $a\in\calA$, we have 
\[
   f(b'-a)\ =\ f(b'-b + b-a)\ =\ f(b'-b)\cdot f(b-a)\,,
\]
 as $f\in\Hom_{m}(S_\sigma,\C)$.
 Thus as points of $\C^\calA$, $\varphi_{b',\sigma}(f) = f(b'-b) \varphi_{b,\sigma}(f)$, so they give the same point in
 $\P^\calA$. 

 To see that  $\varphi_{b,\sigma}(f)\in X_P=X_{\calA^+}$, we show that 
 $\varphi_{b,\sigma}(f)\in\calV(I_{\calA^+})$.
 By Theorem~\ref{Th:Lin_Span_Toric}, it suffices to check that each binomial $z^u-z^v$ in $I_{\calA^+}$ vanishes at
 $\varphi_{b,\sigma}(f)$. 
 Suppose that $u,v\in\N^\calA$ satisfies $\calA^+u=\calA^+v$.
 Then $\calA u=\calA v$ and $|u|=|v|$.
 We compute
 \begin{multline*}
   \qquad
   \bigl(\varphi_{b,\sigma}(f) \bigr)^u\ =\ 
   f\Bigl(\sum_{a\in\calA} u_a(b-a)\Bigr)\ =\ f(|u|b - \calA u)\\
    =\ 
   f(|v|b - \calA v)\ =\ f\Bigl(\sum_{a\in\calA} v_a(b-a)\Bigr)\ =\ 
   \bigl(\varphi_{b,\sigma}(f) \bigr)^v\,, 
  \qquad
 \end{multline*}
 and so  $z^u-z^v$ vanishes at $\varphi_{b,\sigma}(f)$.
 Thus $\varphi_{b,\sigma}(f)\in\calV(I_{\calA^+})$.
 But this equals $X_{\calA^+}$ as $I_{\calA^+}$ is prime and hence radical, by Theorem~\ref{Th:Lin_Span_Toric}.

 Finally, suppose that $x=[x_a\mid a\in\calA]\in X_{\calA^+}$ is a point with $x_c\neq 0$ for some $c\in P_\sigma\cap M$.
 Choose any $b\in P_\sigma\cap M$ and define the monoid homomorphism $f\colon \N\{b-a\mid a\in\calA\}\to \C$  by
 $f(b-a)=x_a x_b^{-1}$, and extend linearly. 
 This is well-defined as $x\in X_{\calA^+}$ and so $x^u=x^v$ and whenever $\calA^+ u=\calA^+v$.
 If $S_\sigma=\N\{b-a\mid a\in\calA\}$, this completes the proof.
 Otherwise, as $S_\sigma=M\cap\Q\{b-a\mid a\in\calA\}$ and $\C$ is algebraically closed, 
 we may extend $f$ to a monoid homomorphism of $S_\sigma$.
\end{proof}

Thus we obtain a well-defined map $\defcolor{\varphi_\sigma}\colon V_\sigma\to X_P\subset\P^\calA$ whose image is the set of points
of $X_P$ whose coordinates indexed by elements of $P_\sigma\cap M$ are nonzero.

\begin{lemma}\label{L:SecondCheck}
 Let $\sigma,\tau\in\Sigma$ be cones with $\tau\subset\sigma$.
 For any element $f\in V_\tau$, we have $\varphi_\tau(f)=\varphi_\sigma(f)$, where we apply $\varphi_\sigma$ to the
 image of $f$ under the natural inclusion $V_\tau\subset V_\sigma$.
\end{lemma}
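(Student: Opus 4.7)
The plan is to exploit the face-reversing correspondence between the inclusion of cones $\tau\subset\sigma$ and the resulting inclusion of exposed faces, together with the independence of $\varphi_\sigma$ from the choice of auxiliary lattice point proven in Lemma~\ref{L:FirstCheck}. Since the two maps agree on coordinates once one reduces them to a common formula, the argument is mostly a bookkeeping verification.

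First I would record the two reversals. Because $\tau\subset\sigma$, the dual inclusion $\sigma^\vee\subset\tau^\vee$ holds, whence $S_\sigma\subset S_\tau$, which is precisely the natural inclusion $V_\tau\subset V_\sigma$ used in the statement: an element $f\in V_\tau$ is sent to its restriction $f|_{S_\sigma}\in V_\sigma$. On the side of the polytope, if $w\in\tau^\circ$ and $w'\in\sigma^\circ$, then any $w+\varepsilon w'$ with small $\varepsilon>0$ lies in $\sigma^\circ$ and achieves its maximum on $P$ at a subset of the face where $w$ does, so $P_\sigma$ is a face of $P_\tau$, and in particular $P_\sigma\cap M\subset P_\tau\cap M$. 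Since $P_\sigma$ is a face of the lattice polytope $P$ it contains a vertex of $P$, so $P_\sigma\cap M$ is nonempty.

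Next I would use the freedom granted by Lemma~\ref{L:FirstCheck} to compute both maps with the \emph{same} base point. Choose any $b\in P_\sigma\cap M$; by the previous paragraph this $b$ also lies in $P_\tau\cap M$, so it is a legitimate choice for $\varphi_\tau$ as well. For every $a\in\calA$ we then have $b-a\in S_\sigma\subset S_\tau$, so both $f(b-a)$ (appearing in $\varphi_\tau(f)$) and $(f|_{S_\sigma})(b-a)$ (appearing in $\varphi_\sigma(f|_{S_\sigma})$) are defined, and by the very definition of restriction they are equal. Hence
\[
   \varphi_\tau(f)\ =\ [f(b-a)\mid a\in\calA]\ =\ [(f|_{S_\sigma})(b-a)\mid a\in\calA]\ =\ \varphi_\sigma(f|_{S_\sigma})
\]
as points of $\P^\calA$, which is the conclusion.

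The only step requiring care is the verification that $P_\sigma\subset P_\tau$ when $\tau$ is a face of $\sigma$; this is the face-reversing duality between the polytope and its normal fan, and it is the one ingredient not stated explicitly earlier. Once this is in hand the rest is a one-line identification. I would therefore spend most of the writeup on this face-correspondence point and present the equality of the two projective points as an immediate consequence.
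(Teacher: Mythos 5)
Your proof is correct and takes essentially the same approach as the paper: choose a base point $b$ common to $P_\sigma\cap M$ and $P_\tau\cap M$, observe $\{b-a\mid a\in\calA\}\subset S_\sigma\subset S_\tau$, and conclude the two projective points agree because restricting $f$ to $S_\sigma$ does not change the values $f(b-a)$. You are more careful than the paper in justifying the face-reversing inclusion $P_\sigma\subset P_\tau$ (and in verifying $P_\sigma\cap M\neq\emptyset$), and you have the inclusion the right way around, whereas the paper's proof appears to contain a typo reversing it, writing $P_\tau\cap M\subset P_\sigma\cap M$ where the normal-fan correspondence actually gives $P_\sigma\cap M\subset P_\tau\cap M$.
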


\begin{proof}
 This is tautological.
 If $f\in V_\tau$, then its image in $V_\sigma$ is obtained by restricting $f$ from $S_\tau$ to the points of $S_\sigma$.
 Choosing $b\in P_\tau\cap M\subset P_\sigma\cap M$, we have that $\{b-a\mid a\in\calA\}\subset S_\sigma\subset S_\tau$,
 so that $\varphi_{b,\tau}(f)=\varphi_{b,\sigma}(f)$, which completes the proof.
\end{proof}

By Lemma~\ref{L:SecondCheck}, the map from the disjoint union of the $V_\sigma$ for $\sigma\in\Sigma$ to $X_P$ given by the
collection of maps  
$\{\varphi_\sigma\mid \sigma\in\Sigma\}$ agrees on the inclusions $V_\tau\subset V_\sigma$ given by inclusions
$\tau\subset\sigma$ of cones in $\Sigma$.
Thus it induces a (surjective) map $\varphi_P\colon X_\Sigma\to X_P$.
This map is in fact an isomorphism of algebraic varieties.

\begin{remark}
 A map similar to the  map $\varphi_P\colon X_\Sigma\to X_P$ may also be defined for any set $\calA\subset P\cap M$ such that 
 $\conv(\calA)=P$.
 Its image will be the projective toric variety $X_{\calA^+}$.
\qed
\end{remark}

\subsection{The Double Pillow}\label{SS:DoublePillow}

We illustrate the construction of toric varieties for the normal fan $\Sigma\subset\R^2$ of the diamond, 
\raisebox{-2pt}{\MyDiamond},
which is the convex hull of the column vectors of the matrix $\MyVect{0&-1&0&1&0}{-1&0&0&0&1}$.
We display this lattice polygon and its normal fan $\Sigma$.
 \begin{equation}\label{Eq:FanDiamond}
  \raisebox{-37pt}{\includegraphics{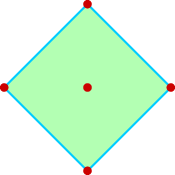}}
   \qquad\qquad
  \raisebox{-51pt}{\begin{picture}(132,112)(5,0)
    \put(5,0){\includegraphics{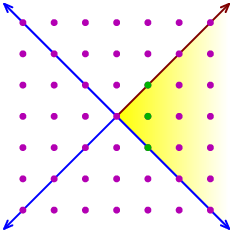}}
    \put(71,109){$\tau$}\put(77,109){\vector(1,-1){17}}
    \put(132,62){$\sigma$}\put(130,64){\vector(-1,0){30}}
  \end{picture}}
 \end{equation}
The fan $\Sigma$ has four rays $\R_\geq\MyVect{\pm1}{\pm1}$ one for each of four choices of signs, and four
two-dimensional cones spanned by adjacent rays.

Each two-dimensional cone $\sigma$ is self-dual and all are isomorphic.
Thus $X_\Sigma$ is obtained by gluing together four isomorphic affine toric varieties $V_\sigma$, as
$\sigma$ ranges over the two-dimensional cones in $\Sigma$.
A complete picture of the gluing involves the affine varieties $V_\tau$, where $\tau$ a ray of $\Sigma$.
We next describe these two toric varieties $V_\sigma$ and
$V_\tau$, for $\sigma$ a two-dimensional cone of $\Sigma$ and $\tau$ a ray of
$\Sigma$. 

Let $\sigma$ be the shaded cone in~\eqref{Eq:FanDiamond}.
Since $\sigma=\sigma^\vee$, we see that $\sigma^\vee\cap\mathbb{Z}^2$ is
minimally generated by the column vectors $\calA$ of the matrix $\MyVect{1&1&1}{-1&1&0}$ highlighted
in~\eqref{Eq:FanDiamond} and so 
$V_\sigma$ is isomorphic to the affine toric variety $X_\calA$, which is the closure in $\C^3$ of the image of the map 
\[
  \varphi\ \colon\ (s,t)\ \longmapsto\ (st^{-1}, st, s)\,,
\]
and is defined by the equation $z_{\MySmVec{1}{-1}}z_{\MySmVec{1}{1}}=z_{\MySmVec{1}{0}}^2$.
This is a cone in $\C^3$.
We display its real points (a right circular cone) in $\R^3$ below at left.
\begin{equation}\label{Eq:PiecesOfPillow}
 \raisebox{-51pt}{ \begin{picture}(144,102)
  \put(0,0){\includegraphics[width=2in]{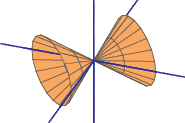}}
   \put(109,90){$z_{\MySmVec{1}{1}}$}
   \put( 75,92){$z_{\MySmVec{1}{0}}$}
   \put(138,42){$z_{\MySmVec{1}{-1}}$}
   \put(92,20){$V_\sigma$}
  \end{picture}
   \qquad\qquad
  \begin{picture}(144,102)
   \put(0,0){\includegraphics[width=2in]{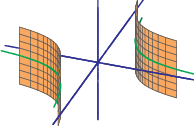}}
   \put(112,94){$z_{\MySmVec{-1}{1}}$}
   \put( 75,96){$z_{\MySmVec{1}{0}}$}
   \put(141,27){$z_{\MySmVec{1}{-1}}$}
   \put(101,18){$V_\tau$}
  \end{picture}}
\end{equation}

Let $\tau$ be the ray generated by $\MySmVec{1}{1}$, which is a face of $\sigma$.
Then $\tau^\vee$ is the half-space $\{(u,v)\in\R^2\mid u+v\geq 0\}$, 
which is the union of both  two-dimensional cones in $\Sigma$ containing $\tau$.
Since $\tau^\vee\cap\mathbb{Z}^2$ has generators the column vectors $\calB$ of the matrix $\MySmVec{1&-1&1}{-1&1&0}$, 
$V_\tau$ is isomorphic to the affine toric variety $X_\calB$, which is the closure in $\C^3$ of the image of the map 
\[
  \varphi\ \colon\ (s,t)\ \longmapsto\ (st^{-1}, s^{-1}t, s)\,,
\]
and has equation $z_{\MySmVec{1}{-1}}z_{\MySmVec{-1}{1}}=1$.
This is the cylinder with base the hyperbola $z_{\MySmVec{1}{-1}}z_{\MySmVec{-1}{1}}=1$ in the
$z_{\MySmVec{1}{-1}},z_{\MySmVec{-1}{1}}$-plane, which is shown in~\eqref{Eq:PiecesOfPillow} (in $\R^3$) at right.

We describe the gluing.
We have that $V_\tau\subset V_\sigma$ and they both
contain the torus $(\C^*)^2$.
In each, this common torus is its intersection with the complement of the coordinate planes in the given embedding, and
the boundary of the torus is its intersection with the coordinate planes.
The boundary of $(\C^*)^2$ in the cylinder $V_\tau$ is the curve $z_{\MySmVec{1}{0}}=0$ and
$z_{\MySmVec{1}{-1}}z_{\MySmVec{-1}{1}}=1$, which  
is displayed in~\eqref{Eq:PiecesOfPillow} on the picture of $V_\tau$.
Also, $t\neq 0$ on this cylinder.
The boundary of  $(\C^*)^2$ in the cone is the union of the $z_{\MySmVec{1}{-1}}$- and $z_{\MySmVec{1}{1}}$-axes.  
Since $t=z_{\MySmVec{1}{1}}/z_{\MySmVec{1}{0}}$ on the cone, the locus where $t=0$ is the $z_{\MySmVec{1}{1}}$-axis.
Thus $V_\tau$ is naturally identified with the complement of the $z_{\MySmVec{1}{1}}$-axis in $V_\sigma$ while the curve
$z_{\MySmVec{1}{0}}=0$ and $z_{\MySmVec{1}{-1}}z_{\MySmVec{-1}{1}}=1$ in 
$V_\tau$ is identified with the $z_{\MySmVec{1}{-1}}$-axis in $V_\sigma$.

If $\tau'$ is the other ray of $\sigma$, then $V_{\tau'}$ ($\simeq V_\tau$) 
is identified with the complement of the $z_{\MySmVec{1}{-1}}$-axis in $V_\sigma$. 
A convincing understanding of this gluing procedure may be obtained by considering
an image of the real points of the toric variety $X_{\MySmDiamond}$ in a projection to $\P^3$.
(Recall that $X_\Sigma$ has a map to the projective toric variety $X_{\MySmDiamond}\subset\P^\calA=\P^4$ which is an isomorphism.) 
The map $\P^\calA\to\P^3$ is given by the points
$[1:\pm1:0:0]$ and $[1:0:\pm1:0]$ associated to the vertices 
$(\pm1,0)$ and $(0,\pm1)$ of $\raisebox{-2pt}{\MyDiamond}$, and the vertical point at infinity $[0:0:0:1]$ associated to
its center. 
(Here, the plane at infinity is $[0:x:y:z]$.)
The image of the toric variety $X_{\MySmDiamond}$ under this projection map $\P^\calA\to\P^3$ 
is a rational surface in $\P^3$.
An affine part of the real points of this surface is shown in Figure~\ref{F:pillow}.
\begin{figure}[htb]
\[
  \includegraphics[width=250pt]{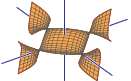}
\]
\caption{The double pillow.}
\label{F:pillow}
\end{figure}

In the coordinates $[w:x:y:z]$ for $\P^3$ this surface has the implicit equation
\[
  (x^2-y^2)^2 - 2x^2w^2 - 2y^2w^2 -16z^2w^2 + w^4\ =\ 0\,.
\]
and its dense torus has parametrization
\[
   [w:x:y:z]\ =\ 
     [{\textstyle s+t+\frac{1}{s}+\frac{1}{t}\,:\  s-\frac{1}{s}\,:\ 
      t-\frac{1}{t}\,:\ 1}]\,.
\]
It has curves of self-intersection along the lines $x=\pm y$ in the plane
at infinity ($w=0$).
As the self-intersection is at infinity, this affine surface is a good
illustration of the real points of the toric variety $X_{\MySmDiamond}$, and 
so we refer to this picture to describe $X_{\MySmDiamond}$. 

This surface contains four lines $x\pm y=\pm1$ and 
their complement is the dense torus in $X_{\MySmDiamond}$.
The complement of any three lines is the piece $V_\tau$ corresponding to a ray $\tau$.
Each of the four singular points is a singular point of one cone
$V_\sigma$, which is obtained by removing the two lines not meeting
that singular point.
Finally, the action of the group $\{(\pm1,\pm1)\}\subset(\C^*)^2$ on the real points 
$X_{\MySmDiamond}(\R)$ may also be seen from this picture.
Each singular point is fixed by this group.
The element $(-1,-1)$ sends $z\mapsto -z$, interchanging the top and bottom
halves of each piece, while the elements $(1,-1)$ and $(-1,1)$ interchange the
central `pillow' with the rest of $X_{\MySmDiamond}(\R)$.
In this way, we see that $X_{\MySmDiamond}(\R)$ is a `double pillow'.\smallskip

The nonnegative part $X_{\MySmDiamond}(\R_\geq)$ of $X_{\MySmDiamond}(\R)$ is also seen in Figure~\ref{F:pillow}. 
The upper part of the middle pillow is the part of $X_{\MySmDiamond}(\R)$
parameterized by $\R_>^2$, and so its closure is just a square, but with
singular corners obtained by cutting a cone into two pieces along a plane of
symmetry. 
This is $X_{\MySmDiamond}(\R_\geq)$.
In fact, the orthogonal projection to the $x,y$-plane identifies $X_{\MySmDiamond}(\R_\geq)$ with the polygon
$\raisebox{-1.5pt}{\MyDiamond}$. 
This is also a consequence of Lemma~\ref{L:Birch}.
The composition of the projection $\P^\calA\to\P^3\to\P^2$, where the last is the orthogonal projection to the $x,y$-plane,
is the projection map $\mu_{\calA}$, at least on $X_{\MySmDiamond}(\R_\geq)$.
From the symmetry of this surface, we see that $X_{\MySmDiamond}(\R)$ is
obtained by gluing four copies of the polygon $\raisebox{-1.5pt}{\MyDiamond}$ together along their
edges to form two pillows attached at their corners.
(The four `antennae' are actually the truncated corners of the second
pillow---projective geometry can play tricks on our affine intuition.)

\subsection*{Exercises}

\begin{enumerate}[1.] 
 \item 
     Show that the lattice octahedron~\eqref{Eq:SomePolytopes} is the intersection 
      of the eight half spaces,  $\{(x,y,z)\in\R^3\mid \pm x\pm y\pm z\leq 1\}$, 
      one for each choice of the three signs $\pm$.

 \item 
     Let $\calA \subset \R^n$ be a finite set, and let $P = \conv(\calA)$ be its convex hull, a polytope. 
     For $w\in \R^n$, recall that 
 \[
        \calA_w\ =\ \{a \in \calA \ |\ w\cdot a = h_p(w)\}\,.
 \]
      Let $P_w$ be the face of $P$ exposed by $w$.
      Show that 
 \[
     \calA_w\ =\  \calA \cap P_w,.
 \]

 \item 
    Using the notation from the previous exercise, prove that $P_w = \conv(\calA_w)$.

 \item    Prove that a vertex (face of dimension zero) in a polyhedron $P$ is \demph{extreme}
   in that it does not lie in the convex hull of other points of $P$.
   Deduce that a polytope is the convex hull of its vertices.
 
 \item 
     Show that every face of a cone is a cone, and that the minimal face is its lineality space.

 \item 
     Let $P\subset\R^n$ be a polytope.
     Show that for $v,w\in \R^n$, the relation 
 \[
     v\ \sim\ w \ \Longleftrightarrow\ P_v=P_w
 \]
   is an equivalence relation.
   Show that the closure of an equivalence class is a cone, and if $P$ is in integer polytope, the cone is rational.
   (Hint: express an equivalence class in terms of the vertices of $P$.)

 \item Determine the cones of the normal fan to the lattice cube, which you may take to be  the convex hull of the vectors 
       $\MySmTVec{\pm1}{\pm1}{\pm1}$, for all eight choices of $\pm$.

 \item Prove that the linear span of a cone $\sigma$ in a rational fan $\Sigma\subset\N_\R$ is spanned by its
   intersection with $N$.

 \item The final paragraph in the proof of Lemma~\ref{L:FirstCheck} is a bit dense.   Fill in the details.

 \item For each rational fan in $\R^2$ below, carry out the construction of the toric variety associated to the fan.
\[
   \includegraphics{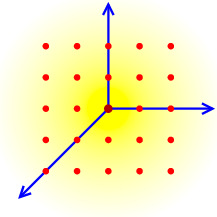}\qquad\quad
   \includegraphics{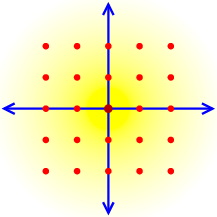}
\]
 Do you recognize either of these varieties?


\end{enumerate}

\section{Bernstein's Theorem and Mixed Volumes}\label{S:Bernstein}

Bernstein~\cite{Bernstein} gave a formula for the number of solutions to a system of polynomials where different
polynomials may have different supports, generalizing Kushnirenko's Theorem.
Bernstein's formula is in terms of Minkowski's mixed volume.
We first review mixed volume, and then give a proof of Bernstein's theorem, adapted from his paper, but using some
elementary notions from tropical geometry.
The discussion of mixed volumes is based on the pages 116--118 in~\cite{Ewald}.

\subsection{Mixed Volumes} \label{SS:MV}
Recall that for a polytope $P\subset\R^n$, $\Vol(P)$ is its volume with respect to the standard Euclidean metric on $\R^n$. 
Write $\Vol_n(P)$ if we need to emphasize the ambient space of $P$.
In particular, if $\dim P<n$, then $\Vol_n(P)=0$.
If $\dim P=m$, then $\Vol_m(P)$ is taken to be its volume in its $m$-dimensional affine span.
(This is used in the proof of Theorem~\ref{Th:Minkowski}.)

We consider two constructions involving polytopes.
Let $P,Q\subset\R^n$ be polytopes and $\lambda\geq 0$ a real number.
Then we may scale $P$ to obtain another polytope, 
\[
   \defcolor{\lambda P}\ :=\ \{ \lambda x \mid x\in P\}\,.
\]
The \demph{Minkowski sum} of $P$ and $Q$ is
\[
   \defcolor{P+Q}\ :=\ \{ x+y \mid x\in P, y\in Q\}\,.
\]
Note that $P+P=2P$.

\begin{example}
 Suppose that $\calA$ is represented by the matrix $(\begin{smallmatrix}0&1&1&2&2\\1&0&2&0&1\end{smallmatrix})$ and 
 $\calB$ is represented by the matrix $(\begin{smallmatrix}0&1&1&2\\0&1&2&1\end{smallmatrix})$, and set 
 $P:=\conv(\calA)$ and $Q:=\conv(\calB)$. 
 Then 
 $P+Q=\conv(\calA+\calB)=\conv(\calC)$, where 
 $\calC$ is represented by the matrix $(\begin{smallmatrix}0&1&1&2&2&4&4\\1&0&3&0&4&1&2\end{smallmatrix})$.
 We display these polytopes and their Minkowski sum in Figure~\ref{F:MinkowskiSum}.\qed
\end{example}
 \begin{figure}[htb]
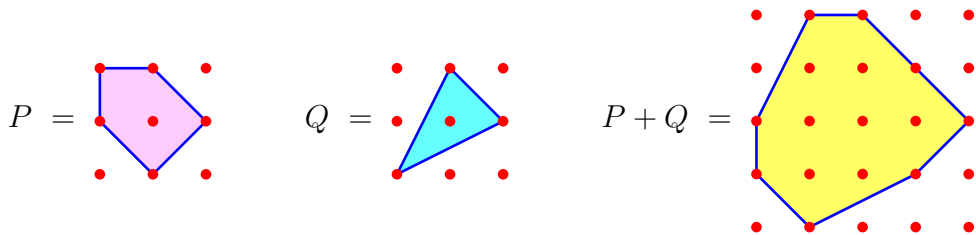

 \[
   P\ =\ \raisebox{-20pt}{\includegraphics{figures/LatticeP.eps}}
   \qquad\quad
   Q\ =\ \raisebox{-20pt}{\includegraphics{figures/LatticeQ.eps}}
   \qquad\quad
   P+Q\ =\ \raisebox{-40pt}{\includegraphics{figures/LatticeP+Q.eps}}
 \]
 \caption{Minkowski sum of two polygons.}
 \label{F:MinkowskiSum}
\end{figure}

Given polytopes $P_1,\dotsc,P_r\subset\R^n$  and nonnegative real numbers $\lambda_1,\dotsc,\lambda_r$, define
 \begin{equation}
 \label{Eq:scaledMinkSum}
  \defcolor{P(\lambda)} \ :=\ \lambda_1 P_1 + \dotsb + \lambda_r P_r\,.
 \end{equation}
The following lemma is left for you to prove in Exercise~\ref{Exer:Minkowski_face}.

\begin{lemma}
 \label{L:Minkowski_face}
 For any vector $w\in\R^n$, the support function $h_{P(\lambda)}(w)$ is the linear function
 $\lambda_1 h_{P_1}(w)+\dotsb+\lambda_r h_{P_r}(w)$, and 
\[
   P(\lambda)_w\ =\ \lambda_1 P_{1,w} + \dotsb + \lambda_r P_{r,w}\,.
\]
 If $P(\lambda)_w$ is a facet of $P(\lambda)$ for one choice of $\lambda_1,\dotsc,\lambda_r$ with all $\lambda_i>0$, then 
 $P(\lambda)_w$ is a facet of $P(\lambda)$ for any $\lambda_1,\dotsc,\lambda_r$ with all $\lambda_i>0$.
\end{lemma}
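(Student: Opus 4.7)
The plan is to prove the three assertions sequentially, each deduced from the previous by appealing to the basic behavior of the support function under scaling and Minkowski sum.

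For the first claim, I would verify two identities directly from the definition of the support function as the maximum of a linear functional:
\[
  h_{\lambda P}(w)\ =\ \lambda h_P(w)\quad(\lambda\geq 0)
  \qquad\text{and}\qquad
  h_{P+Q}(w)\ =\ h_P(w)+h_Q(w).
\]
The first is immediate; the second holds because the maximum of $w\cdot(x+y)=w\cdot x+w\cdot y$ over $x\in P$, $y\in Q$ decouples into the sum of the two separate maxima. Iterating these identities across the polytopes $P_1,\dotsc,P_r$ yields the claimed formula for $h_{P(\lambda)}(w)$.

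For the second claim, I would establish the face-level analogs $(\lambda P)_w=\lambda P_w$ and $(P+Q)_w=P_w+Q_w$. The first is immediate from the definition; the second is the key point. If $x+y$ attains the maximum $h_P(w)+h_Q(w)$ of $w\cdot z$ on $P+Q$, then since $w\cdot x\leq h_P(w)$ and $w\cdot y\leq h_Q(w)$, both inequalities must be equalities, forcing $x\in P_w$ and $y\in Q_w$. The reverse inclusion is clear, and iterating gives the Minkowski-sum formula for $P(\lambda)_w$.

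For the third claim, the strategy is to show that both $\dim P(\lambda)$ and $\dim P(\lambda)_w$ are independent of the particular choice of positive $\lambda_i$. The key observation is that for any polytope $A$ and scalar $\lambda>0$, the direction subspace $V_A:=\operatorname{span}\{a-a'\mid a,a'\in A\}$ satisfies $V_{\lambda A}=V_A$, and for Minkowski sums $V_{A+B}=V_A+V_B$. Applied to the formulas from the first two claims, this gives $V_{P(\lambda)}=V_{P_1}+\dotsb+V_{P_r}$ and $V_{P(\lambda)_w}=V_{P_{1,w}}+\dotsb+V_{P_{r,w}}$ whenever all $\lambda_i>0$, independently of the specific positive values chosen. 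Since the dimension of a polytope equals the dimension of its direction subspace, the codimension of $P(\lambda)_w$ in $P(\lambda)$ is invariant under rescaling by any positive $\lambda_i$, and in particular the facet property is preserved.

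The main (mild) obstacle lies in the third step: one must work with direction subspaces rather than affine hulls, since the affine hulls themselves get translated as the $\lambda_i$ vary, while their associated linear subspaces do not. The first two steps are immediate consequences of the definitions.
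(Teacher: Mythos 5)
Your proof is correct. Note that the paper itself does not prove this lemma---it is explicitly relegated to Exercise~\ref{Exer:Minkowski_face}---so there is no authorial proof to compare against. Your argument is the standard one and addresses all three claims.

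One small remark on exposition: for the first two parts you correctly reduce to the two atomic identities ($h_{\lambda P}=\lambda h_P$, $h_{P+Q}=h_P+h_Q$, and similarly for faces), and the iteration to $r$ summands is immediate. Your handling of the third part is the one place where a lazy reader could stumble, and you have identified the right invariant: the direction (linear) subspaces $V_{P(\lambda)}=V_{P_1}+\dotsb+V_{P_r}$ and $V_{P(\lambda)_w}=V_{P_{1,w}}+\dotsb+V_{P_{r,w}}$ do not depend on the positive scalars, since $V_{\lambda A}=V_A$ for $\lambda>0$ and $V_{A+B}=V_A+V_B$, so both $\dim P(\lambda)$ and $\dim P(\lambda)_w$ are constant on the open orthant $\lambda_i>0$, and the codimension-one property is therefore preserved. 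You are also right that one must argue with direction spaces rather than affine hulls, since the latter move as the $\lambda_i$ vary while the former do not. No gaps.
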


We prove the main result about the volume of the scaled Minkowski sum~\eqref{Eq:scaledMinkSum}.

\begin{theorem}[Minkowski]
 \label{Th:Minkowski}
 Let $P_1,\dotsc,P_r\subset\R^n$ be polytopes.
 For  nonnegative $\lambda_1,\dotsc,\lambda_r$, $\Vol_n(P(\lambda))$ is a homogeneous polynomial of degree $n$ in
 $\lambda_1,\dotsc,\lambda_r$.
\end{theorem}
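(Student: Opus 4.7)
The plan is to induct on the ambient dimension $n$, using the pyramid decomposition of a polytope over its facets together with Lemma~\ref{L:Minkowski_face}. First observe that homogeneity of degree $n$ (if $\Vol_n(P(\lambda))$ is shown to be polynomial) comes for free: since Minkowski sum satisfies $tP+tQ=t(P+Q)$ and $t(sP)=(ts)P$, we have $P(t\lambda)=tP(\lambda)$, and so $\Vol_n(P(t\lambda))=t^n\Vol_n(P(\lambda))$. Thus the whole content is showing that the function $\lambda\mapsto \Vol_n(P(\lambda))$ is polynomial. It suffices to prove this on the open orthant $\{\lambda_i>0\}$, since a continuous function on $\R_\geq^r$ agreeing with a polynomial on a dense open subset is given by that polynomial.

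For the base case $n=1$, each $P_i$ is a bounded interval $[a_i,b_i]$, and Minkowski sums of intervals are intervals with endpoints added, so $\Vol_1(P(\lambda))=\sum_i \lambda_i(b_i-a_i)$, which is linear. For the inductive step, translate each $P_i$ (this does not affect volumes or Minkowski sums) so that the origin lies in the interior of $P(\lambda)$ for all $\lambda$ in the open orthant; then decompose $P(\lambda)$ into pyramids over its facets with apex at $0$:
\[
  \Vol_n(P(\lambda))\ =\ \sum_{F\ \text{facet}} \tfrac{1}{n}\, d(0,\mathrm{aff}(F))\cdot \Vol_{n-1}(F)\,.
\]
By the last assertion of Lemma~\ref{L:Minkowski_face}, the combinatorial type of $P(\lambda)$ is constant on the open orthant: there is a finite set $W\subset\R^n$ of outer facet normals such that for every $\lambda$ in the open orthant, the facets of $P(\lambda)$ are exactly the $F_w:=P(\lambda)_w$ for $w\in W$. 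So the sum above has a fixed index set as $\lambda$ varies.

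By the same lemma, $F_w=\lambda_1 P_{1,w}+\dotsb+\lambda_r P_{r,w}$. The $P_{i,w}$ all lie on parallel affine hyperplanes (translates of $w^\perp$), so $F_w$ itself lies in an affine hyperplane of $\R^n$ and is a Minkowski sum of polytopes in an $(n{-}1)$-dimensional affine space. By the inductive hypothesis (applied in that $(n{-}1)$-dimensional affine space, after translating to the origin), $\Vol_{n-1}(F_w)$ is a homogeneous polynomial of degree $n-1$ in $\lambda_1,\dotsc,\lambda_r$. Meanwhile, $d(0,\mathrm{aff}(F_w))= h_{P(\lambda)}(w)/\|w\|=\sum_i\lambda_i h_{P_i}(w)/\|w\|$ is linear in $\lambda$. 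Each summand in the pyramid formula is therefore a linear form times a homogeneous polynomial of degree $n-1$, hence homogeneous of degree $n$; summing over $w\in W$ gives the desired polynomial expression.

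The main obstacle is the stability of the facet structure as $\lambda$ ranges over the open positive orthant: without this, the index set of the pyramid decomposition could jump and the pieces would not fit into one polynomial expression. This is precisely what Lemma~\ref{L:Minkowski_face} supplies. A secondary technical point is the initial translation ensuring $0\in\mathrm{int}(P(\lambda))$ simultaneously for all $\lambda$ in the open orthant; this can be arranged by first translating each $P_i$ so that $0\in\mathrm{int}(P_i)$, after which $0\in\mathrm{int}(P(\lambda))$ automatically whenever all $\lambda_i>0$.
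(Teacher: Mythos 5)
Your proof is correct and follows essentially the same strategy as the paper's: induction on the ambient dimension $n$, pyramid decomposition of $P(\lambda)$ over its facets with apex at the origin, Lemma~\ref{L:Minkowski_face} both to express $P(\lambda)_w$ as a Minkowski sum (enabling the inductive hypothesis) and to keep the facet index set fixed on the open orthant, and linearity of the support function for the heights. Your preliminary observation that homogeneity is automatic from $P(t\lambda)=tP(\lambda)$ and your uniform translation putting $0\in\operatorname{int}P(\lambda)$ are minor streamlinings of the paper's wording, not a different route.
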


\begin{proof}
 Suppose first that $n=1$.
 Then each $P_i$ is an interval $[a_i,b_i]$ with $a_i\leq b_i$ so that  
 $P(\lambda)=[\lambda_1 a_1+\dotsb+\lambda_r a_r,\lambda_1 b_1+\dotsb+\lambda_r b_r ]$, and we have 
\[
   \Vol_1(P(\lambda))\ =\ 
    \sum_{i=1}^r \lambda_ib_i - \sum_{i=1}^r \lambda_ia_i\ =\ 
    \sum_{i=1}^r \lambda_i(b_i-a_i)\ =\    \sum_{i=1}^r \lambda_i \Vol_1(P_i)\ ,
\]
 which is homogeneous of degree 1 in $\lambda_1,\dotsc,\lambda_r$.

 Now suppose that $n>1$.
 As volume is invariant under translation, we will make some assumptions for the purpose of computation.
 For a given $w\in\R^n$ and all $i$, we may assume that $0$ lies in the face $P_{i,w}$ of $P_i$ exposed by $w$.
 Then each $P_{i,w}$ as well as $P(\lambda)_w$ lies in the hyperplane annihilated by $w$, which is isomorphic to $\R^{n-1}$.
 By induction on dimension, we may assume that 
 $\Vol_{n-1}(P(\lambda)_w)=\Vol_{n-1}(\lambda_1 P_{1,w} + \dotsb + \lambda_r P_{r,w})$ is a homogeneous polynomial of degree
 $n{-}1$ in  $\lambda_1,\dotsc,\lambda_r$.
 This conclusion about $\Vol_{n-1}(P(\lambda)_w)$ remains true even if $0$ does not lie in any face $P_{i,w}$.
 
 Again translating $P(\lambda)$ if necessary, we may assume that $h_{P(\lambda)}(w)>0$.
 Then the pyramid
 $C_w$ with apex $0\in\R^n$ over the facet $P(\lambda)_w$ of $P(\lambda)$ has height $h_{P(\lambda)}(w)$ and
 therefore has  volume
\[
  \frac{1}{n}\cdot\frac{1}{\|w\|}h_{P(\lambda)}(w)\cdot\Vol_{n-1}(P(\lambda)_w)\,
\]
 which is a homogeneous polynomial of
 degree $n$ in  $\lambda_1,\dotsc,\lambda_r$, as $h_{P(\lambda)}(w)$ is linear in  $\lambda_1,\dotsc,\lambda_r$.
 Again using that volume is invariant under translation, now suppose that $0\in P(\lambda)$, and thus the support
 function of $P(\lambda)$ is nonnegative for all $w\in\R^n$.
 Then the pyramids over facets of $P(\lambda)$ form a  polyhedral subdivision of $P(\lambda)$, so that $\Vol(P(\lambda))$ is the sum
 of the volumes of these pyramids.  
 This completes the proof.
\end{proof}

Let us write the polynomial $\Vol(P(\lambda))$ as a tensor (nonsymmetric in $\lambda_1,\dotsc,\lambda_r$),
 \begin{equation}\label{Eq:MinkowskiTensor}
   \Vol(P(\lambda))\ =\ 
   \sum_{a_1,\dotsc,a_n=1}^{r} \MV(P_{a_1},P_{a_2},\dotsc,P_{a_n}) \lambda_{a_1}\lambda_{a_2}\dotsb\lambda_{a_n}\,,
 \end{equation}
 where the coefficients are chosen to be symmetric---for any permutation $\pi\in S_n$, we have 
\[
   \MV(P_{a_1},P_{a_2},\dotsc,P_{a_n})\ =\ \MV(P_{\pi(a_1)},P_{\pi(a_2)},\dotsc,P_{\pi(a_n)})\,.
\]
 The coefficient \defcolor{$\MV(P_{a_1}\dotsc,P_{a_n})$} is the \demph{mixed volume} of the polytopes
 $P_{a_1},\dotsc,P_{a_n}$.

\begin{lemma}
 \label{L:ThreeProperties}
 Mixed volumes satisfy the following properties.
 Let $P,Q,P_1,\dotsc,P_n\subset\R^n$ be polytopes.
 \begin{enumerate}
  \item Symmetry.  $\MV(P_{a_1},\dotsc,P_{a_n}) = \MV(P_{\pi(a_1)},\dotsc,P_{\pi(a_n)})$ for any permutation $\pi\in S_n$.
 
   \item Multilinearity.  For any nonnegative $\lambda,\mu$, we have 
\[
    \MV(\lambda P + \mu Q, P_2,\dotsc,P_n)\ =\ 
      \lambda \MV(P , P_2,\dotsc,P_n)\ + \  \mu \MV(Q, P_2,\dotsc,P_n)\,.
\]
  
   \item  Normalization.   $\MV(P,\dotsc,P)=\Vol_n(P)$.
 \end{enumerate}
\end{lemma}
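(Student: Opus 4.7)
My approach is to read off the three properties as consequences of the coefficient identification in~\eqref{Eq:MinkowskiTensor}, together with Theorem~\ref{Th:Minkowski}. The key observation is that the coefficients $\MV(P_{a_1}, \dotsc, P_{a_n})$ are uniquely determined by requiring that the symmetric tensor they define represents the polynomial $\Vol_n(P(\lambda))$, so each property just amounts to identifying a mixed volume with a coefficient in some instance of~\eqref{Eq:MinkowskiTensor}.

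Symmetry is built into the definition: the expansion~\eqref{Eq:MinkowskiTensor} stipulates that the coefficients be symmetric under $S_n$, so there is nothing to prove here. For normalization, the plan is to apply~\eqref{Eq:MinkowskiTensor} with $r=1$. Then $P(\lambda) = \lambda_1 P$, and scaling by $\lambda_1$ in each coordinate direction gives $\Vol_n(\lambda_1 P) = \lambda_1^n \Vol_n(P)$. On the right-hand side of~\eqref{Eq:MinkowskiTensor}, only the single term $\MV(P, \dotsc, P) \lambda_1^n$ survives, and comparing coefficients of $\lambda_1^n$ yields $\MV(P, \dotsc, P) = \Vol_n(P)$.

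For multilinearity, the plan is to apply the definition to the enlarged collection $(P, Q, P_2, \dotsc, P_n)$ of $n+1$ polytopes with scaling parameters $(\alpha, \beta, \lambda_2, \dotsc, \lambda_n)$. By symmetry, there are $n!$ index sequences $(a_1, \dotsc, a_n)$ contributing to each of the multilinear monomials $\alpha \lambda_2 \dotsb \lambda_n$ and $\beta \lambda_2 \dotsb \lambda_n$, giving them respective coefficients $n!\,\MV(P, P_2, \dotsc, P_n)$ and $n!\,\MV(Q, P_2, \dotsc, P_n)$. I would then substitute $\alpha = \lambda t$ and $\beta = \mu t$ for a fresh parameter $t$. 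Using that scalar multiplication distributes over Minkowski sum, the left-hand side becomes $\Vol_n(t(\lambda P + \mu Q) + \sum_{i=2}^{n} \lambda_i P_i)$, which by~\eqref{Eq:MinkowskiTensor} applied to the $n$-tuple $(\lambda P + \mu Q, P_2, \dotsc, P_n)$ has $n!\,\MV(\lambda P + \mu Q, P_2, \dotsc, P_n)$ as the coefficient of $t \lambda_2 \dotsb \lambda_n$. Equating the coefficients of $t \lambda_2 \dotsb \lambda_n$ before and after the substitution and cancelling $n!$ gives the stated identity.

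There is no serious obstacle; the argument is purely bookkeeping. One must track the factor $n!$ coming from the symmetric tensor expansion and note that an identity valid on the nonnegative orthant of parameters is a genuine polynomial identity, since that orthant has nonempty interior in $\R^{n+1}$.
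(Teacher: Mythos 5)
Your proposal is correct and takes essentially the same route as the paper: symmetry by definition, normalization by setting $r=1$ in~\eqref{Eq:MinkowskiTensor}, and multilinearity by comparing the coefficient of the fully mixed monomial in two expansions of the same volume, one regarding $\lambda P + \mu Q$ as a single polytope ($r=n$) and one splitting it into two summands ($r=n+1$). Your introduction of the auxiliary parameter $t$ with the substitution $\alpha = \lambda t$, $\beta = \mu t$ just makes explicit the bookkeeping that the paper's shorter argument leaves implicit.
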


The notion of (multi-)linearity in statement (2) is weaker than the usual notion.
Usually, a function $f(x)$ is linear in an argument $x$ if $f(\lambda x+\mu y)=\lambda f(x)+ \mu f(y)$ for arguments $x$
and $y$ and {\sl any} numbers $\lambda$ and $\mu$.
For mixed volume, the coefficients $\lambda$ and $\mu$ are nonnegative real numbers.

\begin{proof}
 Symmetry follows from the definition of mixed volume.
 For multilinearity, equate the coefficient of $\lambda_1\dotsb\lambda_n$ in the nonsymmetric
 expansions~\eqref{Eq:MinkowskiTensor} of 
\[
  \Vol( \lambda_1(\lambda P + \mu Q)+ P_2+\dotsb+P_n) 
  \ =\ 
 \Vol( \lambda_1\lambda P + \lambda_1\mu Q+ P_2+\dotsb+P_n)\,.
\]
 (For the first, $r=n$ and for the second, $r=n{+}1$ in~\eqref{Eq:MinkowskiTensor}.)
 Finally, for normalization, note that for $\lambda\geq 0$, 
 $\lambda^n\Vol(P)=\Vol(\lambda P)=\lambda^n \MV(P,\dotsc,P)$, with the first equality coming from the definition of volume
 and the second from the expansion~\eqref{Eq:MinkowskiTensor} defining mixed volume.
\end{proof}

These three properties characterize mixed volumes.

\begin{corollary}
 \label{C:MV_uniqueness}
 Mixed volume is the unique function of $n$-tuples of polytopes in $\R^n$ that satisfies the three properties of
 symmetry, multilinearity, and normalization of Lemma~\ref{L:ThreeProperties}.
\end{corollary}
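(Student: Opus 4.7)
My plan is to show that any function $F$ on $n$-tuples of polytopes in $\R^n$ satisfying the three properties is forced to agree with $\MV$ on every input, by extracting $F(Q_1,\dotsc,Q_n)$ as an explicit coefficient of a polynomial built from $F$ applied to scaled Minkowski sums.

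First I would fix arbitrary polytopes $Q_1,\dotsc,Q_n\subset\R^n$ and form the one-parameter family $P(\lambda):=\lambda_1 Q_1+\dotsb+\lambda_n Q_n$ for $\lambda_1,\dotsc,\lambda_n\geq 0$. By normalization, $F(P(\lambda),\dotsc,P(\lambda))=\Vol_n(P(\lambda))$. Iterating the two-term multilinearity property of Lemma~\ref{L:ThreeProperties} (by induction on the number of summands) shows that $F$ is multilinear under finite nonnegative-coefficient Minkowski combinations in each of its $n$ arguments; applying this slot by slot gives
\[
  F(P(\lambda),\dotsc,P(\lambda))\ =\ \sum_{a_1,\dotsc,a_n=1}^{n} F(Q_{a_1},\dotsc,Q_{a_n})\,\lambda_{a_1}\dotsb\lambda_{a_n}\,.
\]
By symmetry, each coefficient $F(Q_{a_1},\dotsc,Q_{a_n})$ depends only on the multiset $\{a_1,\dotsc,a_n\}$, so this expansion is exactly of the same symmetric-tensor form as~\eqref{Eq:MinkowskiTensor}.

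Next I would identify the coefficient of the multilinear monomial $\lambda_1\lambda_2\dotsb\lambda_n$ on both sides. On the left, the $n!$ permutations of $(1,2,\dotsc,n)$ all contribute $F(Q_1,\dotsc,Q_n)$ by symmetry, giving $n!\,F(Q_1,\dotsc,Q_n)$. On the right, Minkowski's Theorem~\ref{Th:Minkowski} tells us $\Vol_n(P(\lambda))$ is a homogeneous polynomial of degree $n$ in $\lambda_1,\dotsc,\lambda_n$, and the very definition~\eqref{Eq:MinkowskiTensor} makes the coefficient of $\lambda_1\dotsb\lambda_n$ equal to $n!\,\MV(Q_1,\dotsc,Q_n)$. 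Equating the two expressions gives $F(Q_1,\dotsc,Q_n)=\MV(Q_1,\dotsc,Q_n)$.

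The main obstacle is a small technical one: the identity $F(P(\lambda),\dotsc,P(\lambda))=\Vol_n(P(\lambda))$ is a priori only known for $\lambda_i\geq 0$, since Minkowski sums are not defined for negative coefficients. To extract coefficients of monomials we need equality of two polynomials, which follows because two polynomials that agree on the nonnegative orthant (an open set in $\R^n$) must be identical. A minor companion point is verifying that the two-term multilinearity in the lemma really does bootstrap to $r$-term nonnegative combinations in a single slot, which is a routine induction using $(\lambda P+\mu Q)+\nu R=\lambda P+(\mu Q+\nu R)$ together with the stated bilinearity; I would dispatch this at the start of the argument.
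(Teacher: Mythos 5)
Your proposal is correct and follows essentially the same route as the paper: both expand $L(P(\lambda),\dotsc,P(\lambda))$ by normalization and multilinearity and then compare with the tensor expansion~\eqref{Eq:MinkowskiTensor} of $\Vol(P(\lambda))$. The only difference is that you spell out the coefficient-extraction step (reading off the coefficient of $\lambda_1\dotsb\lambda_n$), the bootstrap from two-term to $r$-term multilinearity, and the remark that agreement of two polynomials on the nonnegative orthant forces identity, all of which the paper's terser proof leaves implicit.
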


\begin{proof}
 Let $L$ be a function of $n$-tuples of polytopes in $\R^n$ that satisfies the three properties of
 symmetry, multilinearity, and normalization of Lemma~\ref{L:ThreeProperties}.
 For any polytopes $P_1,\dotsc,P_n\subset\R^n$ and nonnegative $\lambda_1,\dotsc,\lambda_n$, we have
 $\Vol(P(\lambda))= L(P(\lambda),\dotsc,P(\lambda))$ by normalization.
 Expanding this using~\eqref{Eq:scaledMinkSum} and the multilinearity of $L$, we obtain
\[
   L(P(\lambda),\dotsc,P(\lambda))\ =\ 
   \sum_{a_1,\dotsc,a_n=1}^{n} L(P_{a_1},P_{a_2},\dotsc,P_{a_n}) \lambda_{a_1}\lambda_{a_2}\dotsb\lambda_{a_n}\,.
\]
 The equality of this sum with the sum~\eqref{Eq:MinkowskiTensor} and the symmetry of both $L$ and $\MV$ in their arguments
 completes the proof. 
\end{proof}

We give another formula for mixed volume and prove a stronger version of Corollary~\ref{C:MV_uniqueness}.
This involves a weaker condition than multilinearity, that of \demph{multiadditivity} in which the nonnegative coefficients
$\lambda$ and $\mu$ are both taken to be 1.
Given polytopes $P_1,\dotsc,P_n$ and $\emptyset\neq A\subset[n]$ write \defcolor{$P(A)$} for the Minkowski sum
$\sum_{i\in A} P_i$.

\begin{theorem}
 \label{Th:MV_formula}
 Let $\calP$ be a collection of polytopes in $\R^n$ that is closed under Minkowski sum.
 Suppose that $L$ is a function of $n$-tuples of polytopes in $\calP$ that is symmetric in its arguments and normalized (as
 in Lemma~\ref{L:ThreeProperties}), and that $L$ is multiadditive under Minkowski sum ($\lambda=\mu=1$ in
 Lemma~\ref{L:ThreeProperties}). 
 Then for any polytopes $P_1,\dotsc,P_n\in\calP$, we have
 \begin{equation}
   \label{Eq:MV_formula}
   n! L(P_1,\dotsc,P_n)\ =\ 
    \sum_{\emptyset\neq A\subset[n]} (-1)^{n-|A|}\,\Vol(P(A))\,.
 \end{equation}
 In particular, $L$ equals mixed volume, $L(P_1,\dotsc,P_n)=\MV(P_1,\dotsc,P_n)$.
\end{theorem}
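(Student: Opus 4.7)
The plan is to apply the hypotheses (symmetry, normalization, multilinearity under Minkowski sum) to the right-hand side of~\eqref{Eq:MV_formula}, reducing it to a multiple-sum that collapses by a binomial identity. Since $L$ is normalized and $P(A)\in\calP$, I would first rewrite $\Vol(P(A)) = L(P(A),\dotsc,P(A))$ and then expand each factor via multilinearity:
\[
  L(P(A),\dotsc,P(A))\ =\ \sum_{(a_1,\dotsc,a_n)\in A^n} L(P_{a_1},\dotsc,P_{a_n})\,.
\]
Substituting into the right side of~\eqref{Eq:MV_formula} and interchanging the order of summation gives
\[
  \sum_{(a_1,\dotsc,a_n)\in[n]^n}\ \biggl(\,\sum_{A\supset\{a_1,\dotsc,a_n\}} (-1)^{n-|A|}\biggr)\; L(P_{a_1},\dotsc,P_{a_n})\,,
\]
where the inner sum ranges over subsets $A\subset[n]$ containing the support $S:=\{a_1,\dotsc,a_n\}$.

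The key step is evaluating the inner sum. Writing $j=|A|-|S|$, it equals $\sum_{j=0}^{n-|S|}\binom{n-|S|}{j}(-1)^{n-|S|-j}$, which is $(1-1)^{n-|S|}=0$ unless $|S|=n$, in which case it equals $1$. Thus only tuples $(a_1,\dotsc,a_n)$ whose coordinates are a permutation of $[n]$ survive. There are $n!$ such tuples, and by the symmetry hypothesis each contributes $L(P_1,\dotsc,P_n)$, yielding $n!\,L(P_1,\dotsc,P_n)$. This proves~\eqref{Eq:MV_formula}.

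For the uniqueness conclusion, I observe that by Lemma~\ref{L:ThreeProperties} the mixed volume $\MV$ satisfies symmetry, normalization, and (in particular) multilinearity under Minkowski sum, so the identity~\eqref{Eq:MV_formula} applies equally to $L$ and to $\MV$. Since the right side depends only on $\Vol$ and the polytopes $P_1,\dotsc,P_n$, the two left sides agree, giving $L(P_1,\dotsc,P_n)=\MV(P_1,\dotsc,P_n)$. I expect no real obstacle: the only point requiring a little care is that the expansion by multilinearity demands $P(A)\in\calP$ and that the intermediate partial sums also lie in $\calP$, which is exactly why the hypothesis that $\calP$ be closed under Minkowski sum is included in the statement. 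Everything else is a direct manipulation of the binomial identity $\sum_{j}\binom{m}{j}(-1)^{m-j}=\delta_{m,0}$.
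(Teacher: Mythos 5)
Your proof is correct and follows essentially the same route as the paper's: expand $\Vol(P(A))=L(P(A),\dotsc,P(A))$ by normalization and multilinearity, interchange the sums, and invoke the binomial identity $\sum_{B\subset A\subset[n]}(-1)^{n-|A|}=(1-1)^{n-|B|}$ to kill every term except those with support equal to $[n]$, then use symmetry to collect the $n!$ surviving terms. Your closing remark about applying the same identity to $\MV$ to deduce $L=\MV$ makes explicit a point the paper leaves implicit, and your observation about why $\calP$ must be closed under Minkowski sum is a nice justification of a hypothesis the paper states without comment.
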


\begin{example}
 If $P,Q,R$ are polytopes in $\R^3$, then $ 6 \MV(P,Q,R)$ equals 
\[
    \Vol(P+Q+R)-\Vol(P+Q)-\Vol(P+R)-\Vol(Q+R)+\Vol(P)+\Vol(Q)+\Vol(R)\,.
\]
For polygons $P,Q$, we have $2\MV(P,Q)=\Vol(P+Q)-\Vol(P)-\Vol(Q)$.
For the polygons in Figure~\ref{F:MinkowskiSum}, if we subdivide $P+Q$ as shown,
\[
   \includegraphics{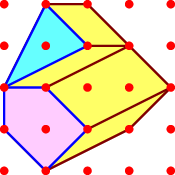}
\]
then $2\MV(P,Q)$ equals the combined areas of the four parallelograms, which is six.\qed
\end{example}

\begin{proof}[Proof of Theorem~\ref{Th:MV_formula}]
 Let $\emptyset\neq A\subset[n]$.
 Since $L$ is normalized, $L(P(A),\dotsc,P(A))$ equals $\Vol(P(A))$.
 Expand $L(P(A),\dotsc,P(A))$ using the multiadditivity of $L$ to obtain
 \begin{equation}
  \label{Eq:Partial_expansion}
   \Vol(P(A))\ =\ \sum_{a_1,\dotsc,a_n\in A} L(P_{a_1},\dotsc,P_{a_n})\,.
 \end{equation}
  Let $b_1,\dotsc,b_n$ be any sequence with $b_i\in[n]$ and set $B:=\{b_1,\dotsc,b_n\}$.
  Then $L(P_{b_1},\dotsc,P_{b_n})$ occurs in the sum~\eqref{Eq:Partial_expansion} if and only if $B\subset A$, and in that
  case, it appears with coefficient 1.

  Expand the right hand side of~\eqref{Eq:MV_formula} in terms of the function $L$ using~\eqref{Eq:Partial_expansion}. 
  Then for $b_1,\dotsc,b_n \in [n]$ the term $L(P_{b_1},\dotsc,P_{b_n})$ occurs with coefficient
\[
  \sum_{B\subset A\subset[n]} (-1)^{n-|A|}\ =\  (1-1)^{n-|B|}\ =\ 
   \begin{cases} 0 & \mbox{if }B\neq [n]\\
                 1 & \mbox{if }B=[n]
   \end{cases}\ .
\]
  Thus the right hand side of~\eqref{Eq:MV_formula} reduces to the sum of $L(P_{b_1},\dotsc,P_{b_n})$
  for $b_1,\dotsc,b_n$ distinct.  
  Each of these $n!$ terms are equal by symmetry, which completes the proof.
\end{proof}

\subsection{Bernstein's Theorem} \label{SS:Bernstein}
We begin with an example.

\begin{example}
The system $f=g=0$ of cubic sparse polynomials on $(\C^*)^2$, where 
%
%
%
 \begin{equation}\label{Eq:mixed_system}
   f\ :=\ \Blue{x + 2y + 3xy + 5x^2y + 7y^2 + 11xy^2}
  \quad\mbox{ and }\quad
   g :=\ \Magenta{1 + 3xy + 9x^2y + 27xy^2}\,,
 \end{equation}
has six solutions
\begin{eqnarray*}
  & (-0.21013, -0.44087)\,,\ 
    ( 0.94037, -0.13693)\,,\ 
    (-0.62796,  0.29688)\,,\ 
    (-1.1747,   0.36649)\,,&\\
  & 
    ( 0.85566 \mp 0.55260\sqrt{-1}, -0.36620\pm0.25941\sqrt{-1})\,,&
\end{eqnarray*}
and not $9=3\cdot 3$, which is the number predicted by B\'ezout's Theorem.
Figure~\ref{F:mixed_system} shows the curves defined by $f$ and $g$ in $\R^2$.
\begin{figure}[htb]
\[
   \begin{picture}(203,156)(-6,-3)
   \put(-3,-3){\includegraphics[height=156pt]{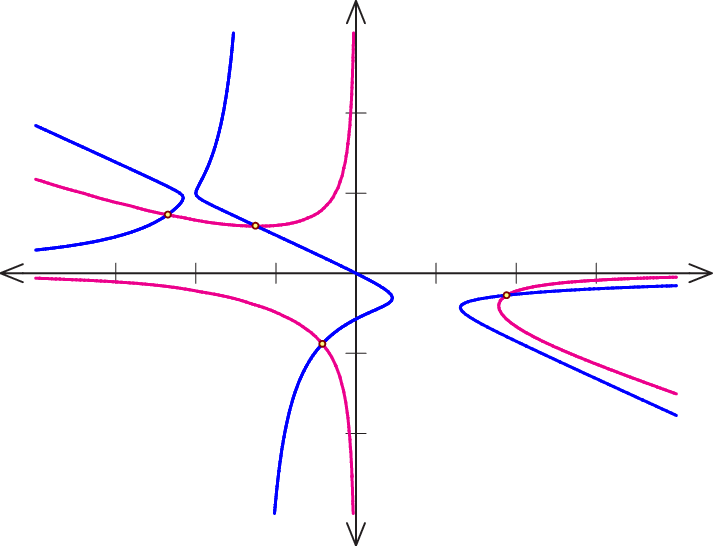}}
   \put(141.5,80.5){$1$}
   \put(105,117){$1$}   \put(105,26){$-1$}
   \put(10,121){\Blue{$f$}}   \put(54,137){\Blue{$f$}}
   \put(65,10){\Blue{$f$}}   \put(120,61){\Blue{$f$}}
   \put(-1,101){\Magenta{$g$}}   \put(10,65){\Magenta{$g$}}   \put(183,48){\Magenta{$g$}}
  \end{picture}
\]
\caption{Curves of the polynomial system~\eqref{Eq:mixed_system}.}
\label{F:mixed_system}
\end{figure}
The Newton polytopes for $f$ and $g$ are the lattice polygons $P$ and $Q$ in Figure~\ref{F:MinkowskiSum}, respectively. 
Observe that the number of solutions is $2\MV(P,Q)$.
Exercise~\ref{Exer:mixed_system} asks you to compute the number of solutions for different pairs of polynomials
with the same support as $f$ and $g$~\eqref{Eq:mixed_system}.\qed
\end{example}

Bernstein's Theorem generalizes this observation.
As in Subsection~\ref{S:Kushnirenko}, for a finite set $\calA\subset M$, we identify the set of polynomials whose support
is a subset of $\calA$ with the vector space $\C^\calA$ of the possible coefficients of such polynomials.
We identify $\C^{\calA_1}\times\dotsb\times\C^{\calA_n}$ with the set of systems of polynomials with support
$(\calA_1,\dotsc,\calA_n)$, and $\C^{P_1}\times\dotsb\times\C^{P_n}$ the set of systems of polynomials with Newton polytopes 
$P_1,\dotsc,P_n$.

\begin{theorem}[Bernstein]
 The number of isolated solutions in $(\C^*)^n$, counted with multiplicity, of a system 
\[
   f_1(x)\ =\ f_2(x)\ =\ \dotsb\ =\ f_n(x)\ =\  0
\]
 of $n$ polynomials is at most $n!\MV(P_1,\dotsc,P_n)$, where $P_i$ is the Newton polytope of $f_i$.
 There is a dense open subset of\/ $\C^{P_1}\times\dotsb\times\C^{P_n}$ consisting of systems with  Newton polytopes 
 $P_1,\dotsc,P_n$ having exactly $n!\MV(P_1,\dotsc,P_n)$ solutions in $(\C^*)^n$, each isolated and occurring with multiplicity one. 
\end{theorem}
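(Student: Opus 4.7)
The plan is to define $B(P_1,\dotsc,P_n)$ to be the generic number of isolated solutions in $(\C^*)^n$ of a system with Newton polytopes $P_1,\dotsc,P_n$, then verify that $B$ satisfies the three hypotheses of Theorem~\ref{Th:MV_formula}, so that $B=n!\MV$ by uniqueness.

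First I would establish that $B$ is well-defined. Using Lemma~\ref{L:genericSystems} (which extends verbatim to the setting of mixed supports), there is a dense Zariski open subset $U\subset \C^{P_1\cap M}\times\dotsb\times\C^{P_n\cap M}$ on which every system has the same number of isolated solutions in $(\C^*)^n$, all simple. Call this common value $B(P_1,\dotsc,P_n)$. This is the maximum count over all systems with the prescribed Newton polytopes, since under specialization isolated torus solutions can only be lost: they may escape to the boundary of a toric compactification, merge with roots on that boundary, or fuse into a positive-dimensional component, but no new isolated torus solutions can appear. This immediately yields the first assertion of the theorem, namely that any system has at most $B(P_1,\dotsc,P_n)$ isolated solutions, counted with multiplicity.

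Next I would verify the three properties required by Theorem~\ref{Th:MV_formula}. \emph{Symmetry} is immediate, since permuting the equations does not change the solution set. \emph{Normalization} $B(P,\dotsc,P)=n!\Vol(P)$ is precisely Kushnirenko's Theorem (Theorem~\ref{Th:Kushnirenko}), applied to $\calA=P\cap M$. The remaining property is \emph{multilinearity under Minkowski sum}:
\[
   B(P+P',P_2,\dotsc,P_n)\;=\;B(P,P_2,\dotsc,P_n)\,+\,B(P',P_2,\dotsc,P_n)\,.
\]
For the inequality $\geq$, take generic $g\in\C^{P\cap M}$ and $h\in\C^{P'\cap M}$, together with generic $f_2,\dotsc,f_n$ having the prescribed Newton polytopes. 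The product $f_1:=gh$ has Newton polytope contained in $P+P'$, and the zero set of $(f_1,f_2,\dotsc,f_n)$ in $(\C^*)^n$ is the disjoint union $V(g,f_2,\dotsc,f_n)\sqcup V(h,f_2,\dotsc,f_n)$, which by genericity consists of $B(P,P_2,\dotsc,P_n)+B(P',P_2,\dotsc,P_n)$ isolated simple points; since $B(P+P',P_2,\dotsc,P_n)$ is the maximum count, the lower bound follows.

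The main obstacle is the reverse inequality $\leq$: a generic system with Newton polytope $P+P'$ must not have strictly more solutions than the product construction. Here I would pass to a smooth projective toric variety $X_\Sigma$ whose fan $\Sigma$ refines the outer normal fans of all the $P_i$, so that the closure of $V(f_i)\cap(\C^*)^n$ in $X_\Sigma$ is a divisor $D_i$ whose numerical class depends only on $P_i$, with $[D_{gh}]=[D_g]+[D_h]$. Invoking the facial-system nondegeneracy of Theorem~\ref{Th:SparseDiscriminant}, which cuts out a Zariski open condition on parameters, no torus solutions escape to the toric boundary for a generic one-parameter deformation $f_1(t)$ from $gh$ to a generic element of $\C^{(P+P')\cap M}$; consequently the number of torus solutions is constant along the family and equals the intersection number $D_1\cdot D_2\cdots D_n$. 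Additivity of this intersection number in the class $[D_1]$ yields the reverse inequality, and Theorem~\ref{Th:MV_formula} then delivers $B(P_1,\dotsc,P_n)=n!\MV(P_1,\dotsc,P_n)$, completing the proof.
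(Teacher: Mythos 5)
Your proposal follows the same high-level strategy as the paper: define the generic count $B(P_1,\dotsc,P_n)$ via Lemma~\ref{L:genericSystems}, verify the three axioms (symmetry, normalization via Kushnirenko's Theorem~\ref{Th:Kushnirenko}, multilinearity under Minkowski sum), and invoke Theorem~\ref{Th:MV_formula}. The genuine difference is in the multilinearity step. The paper's argument is shorter and avoids intersection theory entirely: the key observation~\eqref{Eq:product-sum} gives $d(fg,f_2,\dotsc,f_n)\leq d(f,\dotsc)+d(g,\dotsc)$ with equality when $V(fg,f_2,\dotsc,f_n)$ consists of isolated points, and since $(fg)_w=f_w g_w$, facial nondegeneracy for $(f,f_2,\dotsc,f_n)$ and $(g,f_2,\dotsc,f_n)$ propagates to $(fg,f_2,\dotsc,f_n)$, so Theorem~\ref{Th:SparseDiscriminant}(1) gives $d(fg,\dotsc)=d(\calA+\calB,\calA_2,\dotsc,\calA_n)$ in one step---yielding multilinearity as an equality rather than two inequalities. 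You instead route the reverse inequality through divisor classes on a smooth toric compactification and deformation invariance of the intersection number $D_1\cdots D_n$, which is closer to the classical Fulton/Khovanskii treatment. Both roads are valid, but note that since you already invoke Theorem~\ref{Th:SparseDiscriminant} (whose proof in the paper is the real technical core, via tropicalization of the deformation curve and the balancing condition), your intersection-theoretic machinery is doing work that~\eqref{Eq:product-sum} already accomplishes; a genuinely independent intersection-theoretic proof would need to establish the discriminant dichotomy of Theorem~\ref{Th:SparseDiscriminant} directly from the geometry of $X_\Sigma$, without importing the tropical result.

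Two smaller points. First, Lemma~\ref{L:genericSystems} is already stated in the paper for mixed supports $\calA_1,\dotsc,\calA_n$, so no extension is needed. Second, your claim that $B(P_1,\dotsc,P_n)$ is the \emph{maximum} isolated-solution count over all systems with those Newton polytopes---which you need both for the first assertion of the theorem and for the $\geq$ direction of multilinearity---is only sketched via an intuitive semicontinuity argument (``solutions escape or fuse but don't appear''). The paper proves this carefully: it is precisely the content of Theorem~\ref{Th:SparseDiscriminant}(2), obtained from the balancing condition applied to $\Trop(C)$ for the pencil $F_t=(f_1,\dotsc,f_n)+t(g_1,\dotsc,g_n)$. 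If you are granting yourself Theorem~\ref{Th:SparseDiscriminant}, you should cite part~(2) here rather than relying on the heuristic.
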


Given the results in Subsection~\ref{SS:MV}, particularly Theorem~\ref{Th:MV_formula}, our strategy for proving Bernstein's
Theorem will be to show that the number of solutions to a generic system with given supports depends only on the convex hull
of the supports, is symmetric, is multiadditive under Minkowski sum, and is normalized.
We first prove a lemma about this number for a generic system.

\begin{lemma}\label{L:genericSystems}
 Let $\calA_1,\dotsc,\calA_n$ be finite subsets of\/ $\Z^n$.
 Then there is a nonnegative integer $d$ and a nonempty open subset $U$ of\/ 
 $\C^{\calA_1}\times\dotsb\times\C^{\calA_n}$ consisting of polynomial systems such that 
 if $(f_1,\dotsc,f_n)\in U$ then $\calV(f_1,\dotsc,f_n)$ has exactly $d$ points and all are reduced.

 When $d=0$, if $\calV(f_1,\dotsc,f_n)\neq\emptyset$, then it has dimension at least one.
\end{lemma}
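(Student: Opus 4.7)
The plan is to introduce the incidence variety
\[
W\ :=\ \{((f_1,\dotsc,f_n),x)\in \C^{\calA_1}\times\dotsb\times\C^{\calA_n}\times(\C^*)^n \,:\, f_i(x)=0,\ i=1,\dotsc,n\}
\]
with the two projections $\pi\colon W\to\C^{\calA_1}\times\dotsb\times\C^{\calA_n}$ to the space of systems and $\rho\colon W\to(\C^*)^n$, and then to apply the fiber dimension theorem and generic smoothness to $\pi$.

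First I would verify that $W$ is smooth and irreducible of dimension $\sum_i|\calA_i|$, which is the dimension of the space of systems. Assuming each $\calA_i$ is nonempty (otherwise $f_i\equiv 0$ and no such finite $d$ exists), the fiber of $\rho$ over $x\in(\C^*)^n$ is cut out by the $n$ equations $\sum_{a\in\calA_i}c_a x^a=0$, each of which is a nontrivial $\C$-linear condition on the coefficients of $f_i$ since every $x^a$ is a nonzero scalar on the torus. Hence $\rho^{-1}(x)$ is a product of hyperplanes of total codimension $n$, and $\rho$ realizes $W$ as a vector bundle over the smooth irreducible variety $(\C^*)^n$. This gives the desired smoothness, irreducibility, and dimension $n+\sum_i(|\calA_i|-1)=\sum_i|\calA_i|$.

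Next I would split into cases on whether $\pi$ is dominant. If $\pi$ is not dominant, its image lies in a proper closed subset $Z$, and $U:=(\C^{\calA_1}\times\dotsb\times\C^{\calA_n})\smallsetminus Z$ is a nonempty open set over which every system has empty zero set in $(\C^*)^n$; this yields $d=0$. For the positive-dimensionality claim, the fiber dimension theorem applied to the irreducible $W$ implies every nonempty fiber of $\pi$ has dimension at least $\dim W-\dim\overline{\pi(W)}\geq 1$, so any system whose zero set in $(\C^*)^n$ is nonempty has zero set of dimension at least one. If instead $\pi$ is dominant, then $\dim\overline{\pi(W)}=\dim W$, so the generic fiber has dimension $0$ and is therefore finite.

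The main obstacle is the last step: upgrading ``generically finite fiber'' to ``exactly $d$ reduced points on an open set $U$''. I would invoke generic smoothness in characteristic zero to obtain a dense open subset of $W$ on which $\pi$ is smooth; since source and target have the same dimension, smoothness at a point forces $\pi$ to be \'etale there. Restricting to a suitable open $U$ in the target, $\pi^{-1}(U)\to U$ is then a finite \'etale cover of the connected open $U$, and hence has locally constant, therefore constant, fiber cardinality $d\geq 1$, with each fiber consisting of $d$ reduced points. This produces the $U$ and $d$ demanded by the lemma and closes the plan.
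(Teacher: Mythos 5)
Your proposal follows the same route as the paper: form the incidence variety in $\C^{\calA_1}\times\dotsb\times\C^{\calA_n}\times(\C^*)^n$, note that projection to the torus makes it a bundle of linear spaces so it is irreducible of dimension $\sum_i|\calA_i|$, then split on whether the projection to the space of systems is dominant, using the fiber dimension theorem for the $d=0$ case and generic smoothness (the paper cites Sard's theorem and algebraicity) for the case of generically finite fibers. Your version is a bit more careful in two small ways that are improvements rather than deviations: you phrase the dichotomy as dominant versus non-dominant, which is the right condition (the paper says ``surjective,'' which is slightly loose), and you spell out that equal source and target dimensions plus smoothness gives \'etale, hence a finite \'etale cover over a suitably shrunk connected open $U$, which is exactly what makes the fiber count locally constant and the fibers reduced.
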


Write \defcolor{$d(\calA_1,\dotsc,\calA_n)$} for the number $d$ from the lemma.
Lemma~\ref{L:genericSystems} applies also to the unmixed systems of Kushnirenko's Theorem~\ref{Th:Kushnirenko}.

\begin{proof}
 Consider the incidence variety of solutions to systems of polynomials with supports $\calA_1,\dotsc,\calA_n$,
\[
   \defcolor{\Gamma}\ :=\ \{ (x,f_1,\dotsc,f_n)\in (\C^*)^n\times \C^{\calA_1}\times\dotsb\times\C^{\calA_n}
    \mid f_1(x)=\dotsb=f_n(x)\}\,.
\]
 For $x\in(\C^*)^n$, the set $\{f_i\in\C^{\calA_i}\mid f_i(x)=0\}$ is a hyperplane in $\C^{\calA_i}$, as $f_i(x)=0$ is a
 nonzero linear form on the coefficients of $f_i$.
 Thus the fiber of the map $\Gamma\to(\C^*)^n$ is the product of these $n$ hyperplanes and is thus a linear space of dimension 
 $\sum_{i=1}^n|\calA_i|\ -n$.
 This implies that $\Gamma$ is irreducible of dimension $\sum_{i=1}^n|\calA_i|$.
 
 The projection of $\Gamma$ to the other factor $\C^{\calA_1}\times\dotsb\times\C^{\calA_n}$ has 
 fiber over a point $(f_1,\dotsc,f_n)$ equal to the set of solutions $\calV(f_1,\dotsc,f_n)$.
 If this projection is surjective, then there is a positive integer $d$ and an open subset $U$ of the image consisting
 of points with exactly $d$ preimages---these are regular values of the projection. 
 (This is a consequence of Sard's Theorem and algebricity.)
 These are sparse systems with exactly $d$ solutions in $(\C^*)^n$, and each is reduced as the projection is regular over
 $U$. 

 If the map fails to be surjective, then the complement of its image contains an open subset $U$.
 Polynomial systems $(f_1,\dotsc,f_n)\in U$ have no solutions,  $\calV(f_1,\dotsc,f_n)=\emptyset$, and so $d=0$.
 This completes the proof of the first statement.
 Since the image of $\Gamma$ has dimension less than that of $\Gamma$, every fiber has positive dimension, proving the
 second statement. 
\end{proof}

Consider an unmixed system, where each polynomial $f_i$ has the same support, $\calA$.
Then Kushnirenko's Theorem~\ref{Th:Kushnirenko} implies that
$d(\calA,\dotsc,\calA)=n!\Vol_n(\conv(\calA))$.
Note also that the function $d$ is symmetric in its arguments.
To prove Bernstein's Theorem, we 
show that $d(\calA_1,\dotsc,\calA_n)$ depends only upon the convex hulls $\conv(\calA_1),\dotsc,\conv(\calA_n)$
and that it is multiadditive under Minkowski sum.

To understand multiadditivity, for a system  $(f_1,\dotsc,f_n)$ of polynomials, write 
\defcolor{$d(f_1,\dotsc,f_n)$} for the number of isolated points in $\calV(f_1,\dotsc,f_n)$ in the torus $(\C^*)^n$,
counted with multiplicity. 
It is clear that 
 \begin{equation}\label{Eq:product-sum}
   d(f\cdot g, f_2,\dotsc,f_n)\ \leq\ 
   d(f, f_2,\dotsc,f_n)\ +\ 
   d(g, f_2,\dotsc,f_n)\,,
 \end{equation} 
with equality when the system $(f\cdot g, f_2,\dotsc,f_n)$ has only isolated points.
More precisely, the inequality is strict when an isolated solution to one of the systems on the right hand side lies on a
positive-dimensional component defined by the other system.
Multiadditivity of $d(\calA_1,\dotsc,\calA_n)$ would follow from this observation~\eqref{Eq:product-sum}, if we
could show that
\[
   d(f, f_2,\dotsc,f_n)\ =\ d(\calA,\calA_2,\dotsc,\calA_n)
   \quad\mbox{and}\quad
   d(g, f_2,\dotsc,f_n)\ =\ d(\calB,\calA_2,\dotsc,\calA_n)\,,
\]
where $f$ has support $\calA$ and $g$ has support $\calB$, 
together imply that 
\begin{equation}\label{Eq:multiadditive}
    d(f\cdot g, f_2,\dotsc,f_n)\ =\ d(\calA+\calB,\calA_2,\dotsc,\calA_n)\,.
\end{equation}
This will follow by our next theorem, which characterizes the discriminant condition when
$d(f_1, f_2,\dotsc,f_n)<d(\calA_1,\calA_2,\dotsc,\calA_n)$, where $\calA_i=\supp(f_i)$ for $i=1,\dotsc,n$.

For a cocharacter $w\in N\simeq \Z^n$ and a Laurent polynomial $f$, write $f_w$ for the initial form $\ini_w(f)$ of $f$ in the
partial term order $\prec_w$.
Given a system $(f_1,\dotsc,f_n)$ of Laurent polynomials, consider the system of initial forms, $(f_{1,w},\dotsc,f_{n,w})$. 
Since $(t^w x)^a=t^{w\cdot a} x^a$, we have that $f_{i,w}(t^w x)=t^{h_{\calA_i}(w)} f_{i,w}(x)$ for each $i=1,\dotsc,n$.
Thus the variety $\calV(f_{1,w},\dotsc,f_{n,w})$ consists of orbits of $\C^*$ under its action on $(\C^*)^n$ given by the
cocharacter $t^w$ and is therefore either empty or of dimension at least one, by Lemma~\ref{L:genericSystems}.
In particular, translating each $f_{i,w}$ by an appropriate monomial, $(f_{1,w},\dotsc,f_{n,w})$ becomes a system of $n$
polynomials on the quotient $(\C^*)^n/\C^*_w\simeq(\C^*)^{n-1}$, where $\C^*_w\subset (\C^*)^n$ is the image of the
cocharacter $t^w$.  
We therefore expect that for general polynomials $f_1,\dotsc,f_n$ (given their support),
$\calV(f_{1,w},\dotsc,f_{n,w})=\emptyset$, by Lemma~\ref{L:genericSystems}. 

\begin{theorem}\label{Th:SparseDiscriminant}
 Let $(f_1,\dotsc,f_n)$ be a system of Laurent polynomials and set $\calA_i:=\supp(f_i)$.
\begin{enumerate}
 \item If $\calV(f_{1,w},\dotsc,f_{n,w})=\emptyset$ for all $w\in\Z^n\smallsetminus\{0\}$, then all points of $\calV(F)$ are
   isolated and we have $d(f_1,\dotsc,f_n)=d(\calA_1,\dotsc,\calA_n)$. 
 \item If for some $w\in\Z^n\smallsetminus\{0\}$,  $\calV(f_{1,w},\dotsc,f_{n,w})\neq\emptyset$, then 
    $d(f_1,\dotsc,f_n)<d(\calA_1,\dotsc,\calA_n)$ when we have $d(\calA_1,\dotsc,\calA_n)\neq 0$ and 
    $d(f_1,\dotsc,f_n)=0$ when $d(\calA_1,\dotsc,\calA_n)=0$. 
\end{enumerate}
\end{theorem}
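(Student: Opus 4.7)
The plan is to work in a projective toric compactification of $(\C^*)^n$ and interpret facial systems as restrictions of compactified equations to boundary strata. Set $P := P_1 + \dotsb + P_n$ with $P_i := \conv(\calA_i)$, and let $\Sigma$ be its outer normal fan, a complete rational fan that refines the normal fan of each $P_i$. On the projective toric variety $X_\Sigma$, each polynomial $f_i$ with support contained in $P_i$ extends to a global section $\tilde f_i$ of the globally generated line bundle $\calL_{P_i}$ associated to $P_i$. The key geometric link is: for a nonzero cone $\tau \in \Sigma$ and any $w$ in its relative interior, the torus orbit $O_\tau \subset X_\Sigma$ has codimension $\dim\tau$, and the restriction of $\tilde f_i$ to $O_\tau$ equals, up to multiplication by a nowhere-vanishing monomial, the facial polynomial $f_{i,w}$. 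Thus $\tilde f_1,\dotsc,\tilde f_n$ have a common zero on $O_\tau$ if and only if the facial system $(f_{1,w},\dotsc,f_{n,w})$ has a torus solution. Moreover, the top intersection number $[\tilde f_1]\dotsb[\tilde f_n]$ on $X_\Sigma$ is a topological invariant of the line bundles $\calL_{P_i}$, independent of the chosen sections.

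For Part (1), if no facial system has a torus zero then $\calV(\tilde f_1,\dotsc,\tilde f_n)$ avoids every boundary orbit and so is contained in the dense torus $(\C^*)^n$. Any positive-dimensional irreducible component of the zero set would have nonempty closure meeting some boundary orbit (because $X_\Sigma$ is projective), contradicting the previous line; hence $\calV(\tilde f_1,\dotsc,\tilde f_n)$ is finite. The $\tilde f_i$ therefore intersect properly, and the sum of local intersection multiplicities at the torus zeros equals the topological intersection number. Choosing a generic system $(g_1,\dotsc,g_n)$ with the same supports whose facial systems also have no torus zeros---a condition that is generic, by applying Lemma~\ref{L:genericSystems} to each facial support---yields $d(\calA_1,\dotsc,\calA_n)$ reduced torus points for $(g_1,\dotsc,g_n)$, so the common intersection number equals $d(\calA_1,\dotsc,\calA_n)$, matching the count for $(f_1,\dotsc,f_n)$.

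For Part (2) with $d(\calA_1,\dotsc,\calA_n) > 0$, a torus zero of some facial system produces a common zero of $\tilde f_1,\dotsc,\tilde f_n$ on a boundary orbit of $X_\Sigma$. I will compare $(f_1,\dotsc,f_n)$ with a generic system $(g_1,\dotsc,g_n)$ satisfying the hypothesis of Part (1), via the one-parameter family $F(s) := ((1-s)g_i + s f_i)_{i=1}^n$ for $s \in \P^1$. The total zero locus in $X_\Sigma \times \P^1$ is closed and constancy of the intersection number gives total fiber length $d(\calA_1,\dotsc,\calA_n)$ at every $s$ (possibly after passing to a smooth refinement of $\Sigma$ to ensure nice intersection behavior). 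At $s=0$ all $d(\calA_1,\dotsc,\calA_n)$ points are reduced and in the torus; at $s=1$ the fiber places strictly positive multiplicity on the toric boundary (the boundary zero supplied by the facial assumption), leaving strictly fewer than $d(\calA_1,\dotsc,\calA_n)$ points---counted with multiplicity---in the torus. This bounds $d(f_1,\dotsc,f_n) < d(\calA_1,\dotsc,\calA_n)$. The case $d(\calA_1,\dotsc,\calA_n) = 0$ follows immediately from the second assertion of Lemma~\ref{L:genericSystems}: any nonempty zero set is then positive-dimensional and contributes no isolated solutions, so $d(f_1,\dotsc,f_n) = 0$.

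The principal obstacle is rigorous tracking of multiplicities along the specialization $s \to 1$, in particular when the facial zero set $\calV(f_{1,w},\dotsc,f_{n,w})$ on $O_\tau$ is itself positive-dimensional or when the interior zero set degenerates nontrivially. One must show that at least one full unit of intersection multiplicity genuinely moves from the torus to the boundary, rather than being artificially offset by simultaneous multiplicity drops in the torus. Working on a smooth refinement of $\Sigma$ and exploiting flatness of an appropriate relative Hilbert scheme of fibers should make this precise; alternatively, one can argue directly via the cocharacter limit $x(t) = t^w y$, showing that a facial solution $y$ is the limit of torus solutions of nearby systems along this curve, hence accounts for at least one lost torus point.
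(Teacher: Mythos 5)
Your proposal takes a genuinely different route from the paper's. The paper proves the theorem with tropical geometry: it forms the one-parameter family $F_t := (f_1,\dotsc,f_n) + t(g_1,\dotsc,g_n)$, tropicalizes the resulting solution curve $C\subset\C^*\times(\C^*)^n$, and applies the balancing condition to $\Trop(C)\subset\R\times\R^n$. The ray in direction $(1,\bzero)$ has weight $d(\calA_1,\dotsc,\calA_n)$, the ray in direction $(-1,\bzero)$ has weight $d(f_1,\dotsc,f_n)$, and rays in directions $(-1,w)$ with $w\neq 0$ have positive weight exactly when $\calV(f_{1,w},\dotsc,f_{n,w})\neq\emptyset$; balancing then gives the precise accounting for both parts at once. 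Your proposal instead compactifies on $X_\Sigma$ and tracks intersection multiplicities in the classical toric-intersection-theory style. Your Part (1) is essentially sound: if no facial system has a torus zero the compactified zero set is finite and contained in the dense torus, properness of the intersection follows, and comparison with a generic system gives the count.

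Part (2), however, has a real gap, which you acknowledge as the ``principal obstacle'' but do not close. The existence of a common zero of $\tilde f_1,\dotsc,\tilde f_n$ on some boundary orbit does not by itself show that a positive amount of intersection multiplicity resides on the toric boundary in the fiber at $s=1$. If the intersection with the boundary is improper---which is exactly the degenerate case you must handle, since a facial zero set may be positive-dimensional---then the intersection cycle and ``fiber length'' are not naively defined, and you cannot simply distribute the invariant total $d(\calA_1,\dotsc,\calA_n)$ between the torus and the boundary. Nor does a boundary zero automatically arise as the limit of torus zeros of nearby members of the family; that too requires an argument (the boundary zero could in principle be isolated from the family). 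The tropical balancing condition in the paper's proof is precisely the device that provides a clean, global accounting of where the solutions go as $t$ varies, and it is this accounting that your sketch via flatness of a relative Hilbert scheme or cocharacter limits points toward but does not carry out.
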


A facial form $f_w$ of a polynomial corresponds to the subset $\calA_w$ of its support $\calA$.
As $\calA$ is finite, it has only finitely many subsets, so $f$ has only finitely many facial forms.
Consequently, there are only finitely many facial systems $(f_{1,w},\dotsc,f_{n,w})$ for a given system $(f_1,\dotsc,f_n)$.
Thus among {\it a priori} infinite set of conditions that $\calV(f_{1,w},\dotsc,f_{n,w})=\emptyset$ for all
$w\in\Z^n\smallsetminus \{0\}$, there are only finitely many irredundant conditions (one for each facial system).
Each of these is an algebraic condition on the coefficients of the system.
Thus general systems have $d(\calA_1,\dotsc,\calA_n)$ solutions, counted with multiplicity.

In fact, facial forms $f_w$ of a polynomial $f$ correspond to faces of the Newton polytope $\conv(\calA)$ of $f$ and thus
to cones in its normal fan.
More precisely, any two weights $w$ and $w'$ lying in the relative interior of the same cone $\sigma$ in the normal fan to
$\conv(\calA)$ give the same facial system, $f_w = f_{w'}$.
It follows that a facial system $(f_{1,w},\dotsc,f_{n,w})$ depends on which cone in the common
refinement of the normal fans of the polytopes $\conv(\calA_i)$ contains $w$ in its relative interior.

These observation imply the following.

\begin{corollary}\label{Co:convex_Hulls}
  We have that $d(\calA_1,\dotsc,\calA_n)$ is equal to $d(\overline{\calA_1},\dotsc,\overline{\calA_n})$, and so the number 
  $d(\calA_1,\dotsc,\calA_n)$ depends only upon the convex hulls of the supports.
\end{corollary}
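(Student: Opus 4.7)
The plan is to connect a generic system with support $(\calA_1, \dotsc, \calA_n)$ and a generic system with support $(\overline{\calA_1}, \dotsc, \overline{\calA_n})$ by a single pencil, invoke Theorem~\ref{Th:SparseDiscriminant}(1) at both endpoints, and then argue that the number of torus solutions is constant along the pencil via a tropical non-escape argument.

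Choose a generic $F = (f_1, \dotsc, f_n) \in \C^{\calA_1} \times \dotsb \times \C^{\calA_n}$ satisfying the facial condition, namely that $\calV(f_{1,w}, \dotsc, f_{n,w}) = \emptyset$ for every $w \in \Z^n \setminus \{0\}$; as noted just before the corollary, this condition is a finite intersection of Zariski-open conditions on the coefficients and is nonempty, so combined with Lemma~\ref{L:genericSystems} such an $F$ may be chosen with $d(F) = d(\calA_1, \dotsc, \calA_n)$ by Theorem~\ref{Th:SparseDiscriminant}(1), all solutions isolated and reduced. Next, choose a generic $H = (h_1, \dotsc, h_n) \in \C^{\overline{\calA_1}} \times \dotsb \times \C^{\overline{\calA_n}}$ and form the pencil $F_t := F + tH$ for $t \in \C$. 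For generic $t \neq 0$, each $f_i + t h_i$ has support $\overline{\calA_i}$, and genericity of $H$ makes the pencil satisfy the facial condition for $t$ in a cofinite subset of $\C \setminus \{0\}$; Theorem~\ref{Th:SparseDiscriminant}(1) then gives $d(F_t) = d(\overline{\calA_1}, \dotsc, \overline{\calA_n})$ for such $t$. Let $U \subseteq \C$ be the Zariski-open subset on which $F_t$ satisfies its facial condition; $U$ contains $0$ and is cofinite in $\C$, hence connected.

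It remains to show that $t \mapsto d(F_t)$ is constant on $U$. Consider the relative solution variety
\[
  \mathcal{Z}\ :=\ \{(t, x) \in U \times (\C^*)^n : F_t(x) = 0\}
\]
with its first projection $\pi \colon \mathcal{Z} \to U$. By Theorem~\ref{Th:SparseDiscriminant}(1), every fiber of $\pi$ is finite and consists of reduced points, so $\pi$ is quasi-finite and unramified. If I can show $\pi$ is also proper then it becomes finite étale over $U$, its fiber cardinality $d(F_t)$ is constant on the connected base $U$, and comparing $t = 0$ to generic $t \in U$ yields $d(\calA_1, \dotsc, \calA_n) = d(\overline{\calA_1}, \dotsc, \overline{\calA_n})$.

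The principal obstacle is establishing the properness of $\pi$, equivalently the non-escape property: no family of torus solutions $x(t) \in (\C^*)^n$ of $F_t$ can leave every compact subset of $(\C^*)^n$ as $t \to t_0 \in U$. This is the standard tropical argument. If such escape occurred, passing to Puiseux series about $t_0$ would yield expansions $x_i(t) \sim c_i (t - t_0)^{w_i/N}$ with $c_i \in \C^*$ and at least one $w_i \neq 0$; setting $Nw := (w_1, \dotsc, w_n) \in \Z^n \setminus \{0\}$ and extracting the lowest-order term in $t - t_0$ of the identity $F_t(x(t)) = 0$ would produce a common zero $(c_1, \dotsc, c_n) \in (\C^*)^n$ of the facial system of $F_{t_0}$ with respect to the cocharacter $Nw$, contradicting $t_0 \in U$.
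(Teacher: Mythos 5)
Your argument is essentially correct and does real work: the paper's proof of this corollary is extremely terse, asserting that ``a general system with supports $\calA_i$ satisfies the same condition for generality for the larger supports $\overline{\calA_i}$,'' and leaving the reader to notice that Theorem~\ref{Th:SparseDiscriminant}(1) cannot literally be applied with supports $\overline{\calA_i}$ to a system whose actual supports are only $\calA_i$. What the paper really has in mind is a re-run of the pencil-plus-balancing argument from the proof of Theorem~\ref{Th:SparseDiscriminant}, now taking the generic endpoint $G$ to have supports $\overline{\calA_i}$ rather than $\calA_i$, and reading off from the tropical curve $\Trop(C)$ that the $(1,\mathbf 0)$ and $(-1,\mathbf 0)$ rays carry weights $d(\overline{\calA_1},\dotsc,\overline{\calA_n})$ and $d(\calA_1,\dotsc,\calA_n)$ with no other rays in the $t$-directions. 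Your Puiseux-series non-escape argument is a local-coordinate rendering of precisely that tropical fact, so the route is conceptually the same, but you spell out the step the paper leaves implicit. One small overclaim: Theorem~\ref{Th:SparseDiscriminant}(1) delivers that all solutions are \emph{isolated}, not that they are \emph{reduced}, so $\pi$ is not automatically unramified and ``finite \'etale'' is too strong; the fix is cheap---$\mathcal Z$ is cut out by $n$ equations in the smooth $(n{+}1)$-fold $U\times(\C^*)^n$ with $0$-dimensional fibers, hence is a complete intersection curve, so $\pi$ is quasi-finite with Cohen--Macaulay source over a regular base and becomes finite flat once you have properness; the degree (number of solutions counted with multiplicity) is then constant over the connected $U$, which is all you need. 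Alternatively, you could just invoke conservation of number for the proper quasi-finite $\pi$ without mentioning \'etaleness at all.
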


Theorem~\ref{Th:SparseDiscriminant} also implies the multiadditivity of $d(\calA_1,\dotsc,\calA_n)$, and thus Bernstein's Theorem:
Let us call a system of polynomials $(f_1,\dotsc,f_n)$ \demph{Bernstein-general} if
$d(f_1,\dotsc,f_n)=d(\calA_1,\dotsc,\calA_n)$, where $\calA_i=\supp(f_i)$, for each $i=1,..,n$.
By our discussion, Bernstein-general systems are dense in $\C^{\calA_1}\times\dotsb\times\C^{\calA_n}$.
Projecting to the last $n-1$ factors shows that there exist an open subset $U$ of $\C^{\calA_2}\times\dotsb\times\C^{\calA_n}$
such that for $(f_2,\dotsc,f_n)\in U$, there exist $f_1\in\C^{\calA_1}$ such that $(f_1,\dotsc,f_n)$ is Bernstein-general.

Thus given supports $\calA,\calB, \calA_2,\dotsc,\calA_n$, there exist polynomials $f\in\C^\calA$, $g\in\C^\calB$, and
$f_i\in\C^{\calA_i}$ for $i=2,\dotsc,n$ such that both $(f,f_2,\dotsc,f_n)$ and $(g,f_2,\dotsc,f_n)$ are Bernstein-general.
By Theorem~\ref{Th:SparseDiscriminant}, no the facial system of either system has solutions.
As $(f\cdot g)_w = f_w \cdot g_w$, the inequality~\eqref{Eq:product-sum} implies that $(f\cdot g,f_2,\dotsc,f_n)$
is Bernstein-general, which then implies multiadditivity~\eqref{Eq:multiadditive}.
Thus the function $d(\calA_1,\dotsc,\calA_n)$, which only depends upon the convex hulls $P_i$ of the $\calA_i$, by
Corollary~\ref{Co:convex_Hulls},  satisfies the same properties as mixed volume of these convex hulls.
By Corollary~\ref{C:MV_uniqueness},  $d(\calA_1,\dotsc,\calA_n)=\MV(P_1,\dotsc,P_n)$, which is
Bernstein's Theorem.

\begin{proof}[Proof of Theorem~\ref{Th:SparseDiscriminant}.]
 Suppose first that $\dim(\calV(f_1,\dotsc,f_n))>0$, so that $\calV(f_1,\dotsc,f_n)$ has nonisolated solutions and thus
 $\dim\calV(f_1,\dotsc,f_n)\geq 1$. 
 It follows that the tropical variety $\Trop(\calV(f_1,\dotsc,f_n))$ of $\calV(f_1,\dotsc,f_n)$ has dimension at least one. 
 As $\Trop(\calV(f_1,\dotsc,f_n))$ is a rational cone, this implies that it contains a nonzero integer point
 $w\in\Trop(\calV(f_1,\dotsc,f_n))\cap\Z^n$ with $w\neq 0$. 
 But then the initial scheme $\ini_w(\calV(f_1,\dotsc,f_n))$ is nonempty, and therefore
 $\calV(f_{1,w},\dotsc,f_{n,w})\neq\emptyset$. 
 Thus if $\calV(f_{1,w},\dotsc,f_{n,w})=\emptyset$ for all $w\in\Z^n\smallsetminus\{0\}$, then all points of
 $\calV(f_1,\dotsc,f_n)$ are isolated. 

 Now suppose that all points of $\calV(f_1,\dotsc,f_n)$ are isolated.
 First suppose that $d(\calA_1,\dotsc,\calA_n)$ is nonzero and let $(g_1,\dotsc,g_n)$ be a system with support
 $\calA_1,\dotsc,\calA_n$ that  has $d(\calA_1,\dotsc,\calA_n)$ isolated solutions (and in fact exactly this number of
 solutions). 
 Consider the family of systems 
\[
   \defcolor{F_t}\ :=\ (f_1,\dotsc,f_n)\ +\ t(g_1,\dotsc,g_n)\,,
\]
 for $t\in\C^*$.
 For all $t$ with $|t|$ sufficiently large, this has $d(\calA_1,\dotsc,\calA_n)$ distinct solutions, and so 
 $\calV(F_t)$ defines a curve $C\subset \C^*\times(\C^*)^n$ whose fiber over a general point $t\in\C^*$ consists of 
 $d(\calA_1,\dotsc,\calA_n)$ points, and the difference $\calV(F_t)\smallsetminus C$ is contained in finitely many fibers
 over points of $\C^*$.  

 Let us consider the tropical variety $\Trop(C)\subset\R\times\R^n$ of $C$, which differs from $\Trop(\calV(F_t))$ only in
 some components with finite image in $\R=\Trop(\C^*)$.
\[
   \begin{picture}(278,116)(-56,-13)
    \put(0,0){\includegraphics{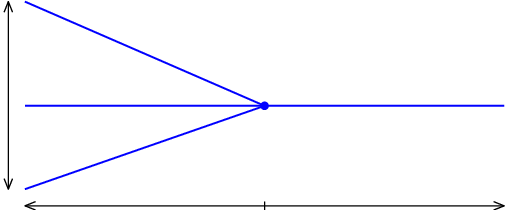}}
    \put(87,70){$\Trop(C)$}
    \put(165,55){$d(\calA_1,\dotsc,\calA_n)$}    \put(15,55){$d(f_1,\dotsc,f_n)$}
     \put(124.5,-11){$0$}
     \put(165,8){$\Trop(\C^*)=\R$}
     \put(-30,39){$=\R^n$}
     \put(-56,52){$\Trop(\C^*)^n$}
   \end{picture}
\]
  As $\ini_{(1,\bzero)}F_t=t(g_1,\dotsc,g_n)$, and $\ini_{(-1,\bzero)}F_t=(f_1,\dotsc,f_n)$, we see that $\Trop(C)$ has a
  ray of weight $d(\calA_1,\dotsc,\calA_n)$ in the direction $(1,\bzero)$ and a ray of weight $d(f_1,\dotsc,f_n)$ in the
  direction $(-1,\bzero)$. 
  Furthermore, the only ray with positive first coordinate is the ray with direction $(1,\bzero)$ as the fiber
  of $C$ over $t\gg 0$ consists of $d(\calA_1,\dotsc,\calA_n)$ points.
  By the balancing condition, $d(\calA_1,\dotsc,\calA_n)$ equals the sum of the weights of all rays with negative first
  coordinate.
  Thus $d(\calA_1,\dotsc,\calA_n)=d(f_1,\dotsc,f_n)$ if and only if there are no other rays $(-1,w)$ with negative first
  coordinate, which is equivalent to $\calV(f_{1,w},\dotsc,f_{n,w})=\emptyset$ for all nonzero $w\in\Z^n$.
  This completes the proof in the case that $d(\calA_1,\dotsc,\calA_n)\neq 0$

  To complete the proof, suppose that $d(\calA_1,\dotsc,\calA_n)=0$.
  By Lemma~\ref{L:genericSystems}, $\calV(f_1,\dotsc,f_n)$ is either empty or it has no isolated solutions, so that
  $d(f_1,\dotsc,f_n)=0$. 
\end{proof}

The invocation of tropical geometry in the proof may be avoided by appealing to asymptotic Puiseaux expansion of algebraic
curves as in Bernstein's original paper~\cite{Bernstein}.

\subsection*{Exercises}
\begin{enumerate}[1.]
 \item Show that for any sets $\calA,\calB\subset\R^n$, we have 
       $\conv(\calA) + \conv(\calB) = \conv(\calA+\calB)$.

\item  \label{Exer:Minkowski_face}
       Give a proof of Lemma~\ref{L:Minkowski_face}, including that the support function of $P(\lambda)$ is linear, as well 
        as that its faces are Minkowski sums of faces of its constituent polytopes.

 \item Let $f$ and $g$ be sparse polynomials.
       Prove that $\New(f\cdot g)=\New(f)+\New(g)$.
       Note that if $f$ and $g$ have support $\calA$ and $\calB$, respectively, then we only have
       $\supp(f\cdot g)\subset \calA+\calB$, as there may be cancellation. 
       (Suppose that $f=1+x$ and $g=1-x$.)
       However, there is no cancellation in the extreme points of $\calA$ and $\calB$, and this equality can be shown by
       considering the support functions.
 \item \label{Exer:mixed_system}
       Generate other pairs of polynomials with the same support as the polynomials in~\eqref{Eq:mixed_system}.
       For each pair, compute the degree of the ideal they generate.
       Can you prove this degree is six for generic coefficients?

\item
 Determine the Newton polytope of each polynomial, and the mixed volume of the Newton polytopes of each
 polynomial system.
 Check the conclusion of Bernstein's Theorem using a computer algebra system such as Macaulay2 or  
 Singular.
\begin{enumerate}
 \item \qquad${\displaystyle 1+2x+3y+4xy\ =\ 1-2xy+3x^2y-5xy^2 \ =\ 0}$.
         \vspace{4pt}

 \item \qquad${\displaystyle \begin{array}{rcl} 
                1 + 2x + 3y  - 5xy + 7x^2y^2&=&0\\ 
      1 - 2xy + 4x^2y + 8xy^2 - 16x^3y + 32xy^3 - 64x^2y^2&=&0
         \end{array}}$.
       \vspace{4pt}

 \item \qquad${\displaystyle \begin{array}{rcl} 
      2 + 5xy - x^2y - 6xy^2 + 4xy^3&=&0\\
      2x-y-2y^2-xy^2+2x^2y+x^2-5xy&=&0
     \end{array}}$.
   \vspace{4pt}

 \item \qquad${\displaystyle  \begin{array}{rcl}
        1+x+y+z+xy+xz+yz+xyz&=&0\\
        xy+2xyz+3xyz^2+5xz+7xy^2z+11yz+13x^2yz&=&0\\
        4 - x^2y + 2x^2z - xz^2  +2yz^2 - y^2z + 2y^2x -8xyz&=&0
     \end{array}}$.
   \vspace{20pt}
\end{enumerate}
\item Compute the mixed volume of the following pairs of lattice polygons.
\[
     (a)\quad \raisebox{-20pt}{\includegraphics{figures/LatticeDiamond.eps}}\hspace{10pt}
              \raisebox{-20pt}{\includegraphics{figures/LatticeHexagon.eps}}
    \qquad
     (b)\quad \raisebox{-20pt}{\includegraphics{figures/LatticeV.eps}}\hspace{10pt}
              \raisebox{-20pt}{\includegraphics{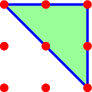}}
    \qquad
     (c)\quad \raisebox{-20pt}{\includegraphics{figures/LatticeV.eps}}\hspace{10pt}
              \raisebox{-30pt}{\includegraphics{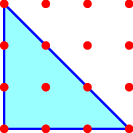}}
\]
\item Compute the mixed volume in $\R^3$ for the following three lattice polygons in the 
      $xy$-, $yz$-, and $xz$-planes, respectively.
    \[ 
      \includegraphics{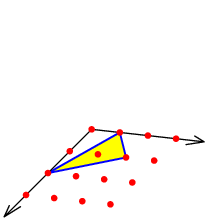}\qquad
      \includegraphics{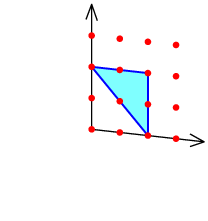}\qquad
      \includegraphics{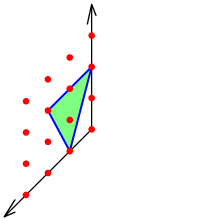}
    \]

\item  Compute the mixed volume in $\R^3$ of the following three lattice polytopes.
\[
  \includegraphics[height=80pt]{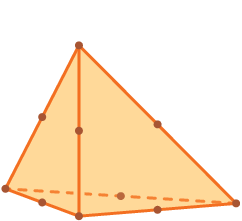} \qquad
  \includegraphics[height=60pt]{figures/Cube.eps}        \qquad
  \includegraphics[height=80pt]{figures/Octahedron.eps}
\]
\item
   Use Lemma~\ref{L:Minkowski_face} to prove that the facial system $(f_{1,w},\dotsc,f_{n,w})$ depends on the cone
   containing $w$ in the common refinement of the normal fans of the polytopes $\conv(\calA_i)$.

\item  Work out the details in the proof of Lemma~\ref{L:ThreeProperties} that were omitted in the proof sketch given.

\item
   Use  Bernstein's Theorem to deduce B\'ezout's Theorem: if $f_1,\dotsc,f_n$ are general polynomials of degree $n$ with
   $\deg(f_i)=d_i$ for $i=1,\dotsc,n$, then $d(f_1,\dotsc,f_n)=d_1d_2\dotsc d_n$.
   Hint: Determine $\defcolor{P_i}=\conv(\supp(f_i))$ for each $i$ and use the properties of mixed volume to compute
   $MV(P_1,\dotsc,P_n)$. 
   

\end{enumerate}

\providecommand{\bysame}{\leavevmode\hbox to3em{\hrulefill}\thinspace}
\providecommand{\MR}{\relax\ifhmode\unskip\space\fi MR }
\providecommand{\MRhref}[2]{%
  \href{http://www.ams.org/mathscinet-getitem?mr=#1}{#2}
}
\providecommand{\href}[2]{#2}

\end{document}